\newif\ifPDF
\newtheorem{thm}{Theorem}[section]
\newtheorem{cor}[thm]{Corollary}
\newtheorem{lem}[thm]{Lemma}
\theoremstyle{definition}
\newtheorem{defn}[thm]{Definition}
\theoremstyle{definition}
\newtheorem{rem}[thm]{Remark}
\numberwithin{equation}{section}
\theoremstyle{definition}
\newtheorem{NN}[thm]{}
\newcommand{\norm}[1]{\left\Vert#1\right\Vert}
\newcommand{\abs}[1]{\left\vert#1\right\vert}
\newcommand{\Int}{\mathbb Z}
\newcommand{\Comp}{\mathbb C}
\newcommand{\Ratn}{\mathbb Q}
\newcommand{\eps}{\epsilon}
\newcommand{\F}{\mathcal{F}}
\newcommand{\Kzero}{{K}_0}
\newcommand{\Kone}{{K}_1}
\newcommand{\tr}{\mathrm{T}}
\newcommand{\aff}{\mathrm{Aff}}
\newcommand{\beq}{\begin{eqnarray}}
\newcommand{\eneq}{\end{eqnarray}}
\newcommand{\tforal}{\,\,\,\text{for\,\,\,all}\,\,\,}
\newcommand{\andeqn}{\,\,\,\text{and}\,\,\,}
\newcommand{\rforal}{\,\,\,{\mathrm{for\,\,\,all}}\,\,\,}
\newcommand{\hm}{homomorphism}
\newcommand{\morp}{completely positive linear map}
\newcommand{\af}{\alpha}
\newcommand{\bt}{\beta}
\newcommand{\dt}{\delta}
\newcommand{\ep}{\epsilon}
\newcommand{\CA}{C*-algebra}
\newcommand{\R}{{\mathbb{R}}}
\newcommand{\p}{{\mathfrak{p}}}
\newcommand{\q}{{\mathfrak{q}}}
\newcommand{\fr}{{\mathfrak{r}}}
\newcommand{\Z}{{\mathbb Z}}
\newcommand{\Q}{{\mathbb Q}}
\newcommand{\T}{{\mathbb T}}
\begin{document}

\title{Homomorphisms into simple ${\cal Z}$-stable
$C^*$-Algebras}
\author{Huaxin Lin}
\email{hlin@uoregon.edu}

\author{Zhuang Niu}
\email{zniu@mun.ca}


\begin{abstract}
Let  $A$ and $B$ be  unital separable simple amenable \CA s which satisfy the Universal Coefficient
Theorem.  Suppose {that} $A$ and $B$ are $\mathcal Z$-stable and are of rationally tracial rank no more than one.
We prove the following: Suppose that $\phi, \psi: A\to B$ are unital {*-monomorphisms}. There  exists
a sequence of unitaries $\{u_n\}\subset B$ such that
$$
\lim_{n\to\infty} u_n^*\phi(a) u_n=\psi(a)\tforal a\in A,
$$
if and only if
$$
[\phi]=[\psi]\,\,\,\text{in}\,\,\, KL(A,B),\ \phi_{\sharp}=\psi_{\sharp}\andeqn\phi^{\ddag}=\psi^{\ddag},
$$
where $\phi_{\sharp}, \psi_{\sharp}: \aff(\tr(A))\to \aff(\tr(B))$ and $\phi^{\ddag}, \psi^{\ddag}:
U(A)/CU(A)\to U(B)/CU(B)$ are {the} induced maps (where $\tr(A)$ and $\tr(B)$ are {the} tracial state spaces
of $A$ and $B,$ and $CU(A)$ and $CU(B)$ are the closures of the commutator subgroups of the unitary groups
of $A$ and $B,$ respectively). We also show that this holds if $A$ is a rationally AH-algebra which is not necessarily simple.  Moreover, for any
{strictly positive unit-preserving} $\kappa\in KL(A,B)$, 
 any continuous affine map $\lambda: \aff(\tr(A))\to \aff(\tr(B))$ 
and any continuous group \hm\ $\gamma: U(A)/CU(A)\to U(B)/CU(B)$ 
which are compatible, we also show that
there is a unital \hm\, $\phi: A\to B$ so that
$([\phi],\phi_{\sharp},\phi^{\ddag})=(\kappa, \lambda, \gamma),$ at least in the  case that $K_1(A)$
is a free group.

\end{abstract}

\maketitle

\setcounter{tocdepth}{1}

\section{Introduction}
Let $X$ and $Y$ be two compact Hausdorff spaces, {and denote by $C(X)$ (or $C(Y)$) the C*-algebra of complex-valued continuous functions on $X$ (or $Y$).} {Any} continuous map $\lambda: Y\to X$ {induces} a homomorphism $\phi$ from the commutative C*-algebra $C(X)$ into the commutative C*-algebra $C(Y)$ {by $\phi(f)=f\circ\lambda,$ and any \hm\, from $C(X)$ to $C(Y)$
arises this way (in this paper, by homomorphisms or isomorphisms between C*-algebras, we mean *-homomorphisms or *-isomorphisms). It should be noted {that}, by the Gelfand-Naimark theorem, every unital commutative C*-algebra has the form $C(X)$ as above.





For non-commutative \CA s, one also studies \hm s. Let $A$ and $B$ be two unital \CA s and let $\phi, \psi: A\to B$ be two \hm s.
A fundamental problem in the study of \CA s is to determine when $\phi$ and $\psi$ are (approximately) unitarily equivalent.

The last two decades saw the rapid development of classification
of amenable \CA s, or otherwise known the Elliott program. For instance, all unital simple AH-algebras with slow dimension growth are classified by
their Elliott invariant (\cite{EGL}). In fact, {the class of} classifiable simple \CA s includes all unital separable amenable simple
\CA s with the tracial rank at most one which satisfy the Universal Coefficient Theorem (the UCT) (see
\cite{Lnuni1}). One of the crucial problems in the Elliott program
is the so-called uniqueness theorem which usually asserts that two monomorphisms are approximately unitarily equivalent if they induce
the same $K$-theory related maps under certain assumptions on \CA s
involved.

Recently, W. Winter's method (\cite{Winter-Z}) greatly advances the Elliott classification  program. The class of amenable separable simple \CA s that can be classified by the Elliott invariant has been enlarged so {that} it contains simple \CA s which no longer {are assumed to}
have finite tracial rank. In fact, with \cite{Winter-Z}, \cite{Lin-App},
\cite{L-N} and \cite{Lnclasn}, the classifiable C*-algebras now include any unital separable
simple ${\cal Z}$-stable C*-algebra $A$ satisfying the UCT 
such that $A\otimes U$ has the tracial
rank no more than one for some
UHF-algebra $U$ (it has recently been shown, for example,
$A\otimes U$ has tracial rank at most one for all UHF-algebras $U$ of infinite type, if $A\otimes C$ has tracial rank at most  one for one of infinite dimensional unital simple AF-algebra (see \cite{LS})). This class of C*-algebras is strictly larger than the class of AH-algebras without dimension growth. For example, it
contains the Jiang-Su algebra ${\cal Z}$ itself which is projectionless
and all simple unital inductive limits of so-called generalized dimension drop
algebras (see \cite{Lin-mdmrp}).

Recall that the Elliott invariant for a stably finite unital simple separable C*-algebra $A$ is
$$\mathrm{Ell}(A):=((K_0(A), K_0(A)_+, [1_A], \tr(A)), K_1(A)),$$
where $(K_0(A), K_0(A)_+, [1_A], \tr(A))$ is the quadruple consisting
of the  $\Kzero$-group, its positive cone, the order unit and  tracial simplex together with
their pairing, and $\Kone(A)$ is the $\Kone$-group.

Denote by ${\cal C}$ the class of all unital simple \CA s $A$ for which $A\otimes U$ has tracial rank no more than one for some
UHF-algebra
$U$ of infinite type.
Suppose that $A$ and $B$ are two unital separable
amenable \CA s in ${\cal C}$ which satisfy the UCT. The classification
theorem in \cite{Lnclasn} states that if the Elliott invariants of $A$ and $B$
are isomorphic, 
i.e., $$\mathrm{Ell(A)\cong\mathrm{Ell}(B)},$$
then there is an isomorphism $\phi: A\to B$ which carries the isomorphism above.

{However, the question when two isomorphisms are approximately unitarily equivalent was still left open.}
A more general question is:
for any two such  \CA s $A$ and $B,$ and, for any  two \hm s $\phi, \psi: A\to B,$
when are they approximately unitarily equivalent?

If $\phi$ and $\psi$ are approximately unitarily equivalent, then one must have,
$$
[\phi]=[\psi]\,\,\,\text{in}\,\,\, KL(A, B)\andeqn \phi_{\sharp}=\psi_{\sharp},
$$
where $\phi_{\sharp},\psi_{\sharp}: \aff(\tr(A))\to \aff(\tr(B))$  are the
 affine maps induced by $\phi$ and $\psi,$ respectively.
Moreover, as shown in \cite{Lin-AU},  one also has
$$
\phi^{\ddag}=\psi^{\ddag},
$$
where $\phi^{\ddag}, \psi^{\ddag}: U(A)/CU(A)\to U(B)/CU(B)$ are \hm s induced by $\phi, \,\psi,$ and 
$CU(A)$ and $CU(B)$ are the closures of the commutator subgroups of the unitary groups of $A$ and $B,$ respectively.

In this paper, we will show that the above conditions are also sufficient,
 that is,  the maps $\phi$ and $\psi$ are approximately unitarily equivalent
if and only if $[\phi]=[\psi]$ in $KL(A,B),$ $\phi_{\sharp}=\psi_{\sharp}$ and $\phi^{\ddag}=\psi^{\ddag}.$

Not surprisingly, the proof of this uniqueness theorem is based on the methods developed in the proof of
the classification result mentioned above, which can be found in
\cite{Lnclasn}, \cite{Lin-hmtp}, \cite{Lin-AU},
\cite{L-N} and \cite{Lin-Asy}.
Most technical tools are {developed} in those papers, either directly or implicitly.
In the present paper,
we will collect them 
and then assemble them into  production.

It should be noted that 
the above-mentioned uniqueness theorem still holds in a more general setting where
the source algebra $A$ is not necessary in the class $\mathcal C$. For example,
it is still valid for
all  AH-algebras $A$  which are not necessarily simple.
In particular, $A$ could be just $C(X)$ for any
compact metric space $X$.

In that situation, the first version of this kind of uniqueness theorem  was {proved} in \cite{Lin-Gong}, where $A=C(X)$ and
$B$ is a unital simple \CA\, with the unique tracial state and with real rank zero, stable rank one and weakly unperforated
$K_0(B).$

Then, in \cite{Lin-Tams-07},  it was shown that,
if $A=C(X)$ for some compact metric space $X$ and
$B$ is a unital simple \CA\, with tracial rank zero, then any unital monomorphisms $\phi$ and $\psi$ from $A$ to $B$ are approximately unitarily equivalent if and only if
$[\phi]=[\psi]$ in $KL(A,B)$ and $\phi_{\sharp}=\psi_{\sharp}.$ This result was
then generalized to the case that
$B$ has tracial rank  no more than one 
with the additional condition
$\phi^{\ddag}=\psi^{\ddag}$ in \cite{Lin-AU11}.   

From this point of view, the main result in this paper may also 
be regarded as a further generalization
of these uniqueness theorems. {In fact, in this paper, we also
allow the source algebra $A$ to be any unital \CA\, such that
$A\otimes U$ is a unital AH-algebra for all UHF-algebra $U$ of infinite type.}
One should also realize that these uniqueness theorems have a common
root: The Brown-Douglass-Fillmore theorem for essentially normal operators. One version of it can be stated as follows:
Two monomorphisms $\phi, \psi: C(X)\to B(H)/{\cal K}$---the Calkin algebra, which is a unital simple \CA\, with real rank zero---are unitarily equivalent if and only if $[\phi]=[\psi]$ in $KK(C(X), B(H)/{\cal K}).$

As this research was under way, we learned that H. Matui was conducting his
own investigation 
on the same problems. In fact, he proved the same uniqueness theorems mentioned under the assumption that $B\otimes U$ has tracial rank zero. Moreover, he actually showed the same result holds 
if the assumption that $B\otimes U$ has tracial rank zero is weaken to be 
that $B\otimes U$ has real rank zero, {stable rank one and weakly unperforated $K_0(B\otimes U),$} at least for the case
that quasi-traces are traces and there are only finitely many of extremal tracial states.

In \cite{L-N-Range}, it is shown
that, for any partially ordered simple weakly unperforated rationally Riesz group $G_0$ with order unit $u,$ any countable abelian group
$G_1,$ any metrizable Choquet simple $S,$ and any surjective affine continuous map $r: S\to S_u(G_0)$
(the state space of $G_0$) which preserves extremal points,
 there exists  one (and only one up to isomorphism)  unital
 separable simple amenable \CA\, $A\in {\cal C}$ which satisfies the UCT
 so that $\mathrm{Ell}(A)=(G_0, (G_0)_+, u, G_1, S, r).$

Then a natural question is:
Given two unital separable simple amenable C*-algebras
$A, \, B\in {\cal C}$ which satisfy the UCT,
and 
a \hm\, $\Gamma$ from ${\mathrm{Ell}}(A)$ to ${\mathrm{Ell}}(B),$
does there exist a
unital \hm\, $\phi: A\to B$ {which induces $\Gamma?$}
We will give an answer to this question.
Related to the uniqueness theorem discussed earlier and also related to the question {above}, one may also ask the following:
Given an element $\kappa\in  KL(A,B)$ which preserves the unit and order, an affine map $\lambda: \aff(\tr(A))\to \mathrm{Aff}(\tr(B))$ and
 a \hm\, $\gamma: U(A)/CU(A)\to U(B)/CU(B)$ which are compatible, does there exist a unital \hm\, $\phi: A\to B$
 so that  $[\phi]=\kappa,$ $\phi_{\sharp}=\lambda$ and $\phi^{\ddag}=\gamma?$ We will, at least, partially answer this question.


\section{Preliminaries}

\begin{NN}\label{NN1}
Let $A$ be a unital stably finite \CA. Denote by $\mathrm{T}(A)$ the simplex of tracial
states of $A$ and denote by $\textrm{Aff(T}(A))$ the space of all real
affine continuous functions on $\mathrm{T}(A).$ Suppose that $\tau\in \mathrm{T}(A)$ is a
tracial state. We will also {denote by} $\tau$ the trace $\tau\otimes
\mathrm{Tr}$ on $\textrm{M}_k(A)=A\otimes \textrm{M}_k(\Comp)$ (for every integer $k\ge 1$), where
$\mathrm{Tr}$ is the standard trace on $\mathrm{M}_k(\Comp).$  A trace $\tau$ is faithful if $\tau(a)>0$ for any $a\in A_+\setminus\{0\}.$ Denote by $\mathrm{T}_{\mathtt{f}}(A)$ the convex subset of $\mathrm{T}(A)$ consisting of all faithful tracial states.

Denote by $\textrm{M}_{\infty}(A)$ the set $\displaystyle{\cup_{k=1}^{\infty}\textrm{M}_k(A)},$ where $\textrm{M}_k(A)$ is regarded as a C*-subalgebra of $\textrm{M}_{k+1}(A)$ by the embedding
$a\mapsto  \begin{pmatrix} a & 0\\0 & a\end{pmatrix}.$
{For any projection $p\in \mathrm{M}_\infty(A)$,
the restriction $\tau\mapsto\tau(p)$ defines a positive affine function on
$\mathrm{T}(A)$. This induces a canonical positive homomorphism
$\rho_A: \Kzero(A)\to \textrm{Aff(T}(A))$.}

Denote by $\mathrm{U}(A)$ the unitary group of $A$, and denote by $\mathrm{U}(A)_0$ the connected component of $\mathrm{U}(A)$ containing the identity.
{Let} $C$ be another unital \CA\, and {let}  $\phi: C\to A$ be a unital *-homomorphism. Denote by $\phi_{\mathrm{T}}: \mathrm{T}(A)\to \mathrm{T}(C)$
the continuous affine map induced by $\phi,$ i.e., $$\phi_\mathrm{T}(\tau)(c)=\tau\circ \phi(c)$$ for all $c\in C$ and $\tau\in \mathrm{T}(A).$ Denote by $\phi_\sharp:\mathrm{Aff}(\mathrm{T}(C))\to \mathrm{Aff}(\mathrm{T}(A))$ {the map defined} by
$$
\phi_\sharp(f)(\tau)=f(\phi_{\mathrm{T}}(\tau)) \,\,\,{\text{ for\,\,\, all}}\,\,\,\tau\in\mathrm{T}(A).
$$
\end{NN}
\vspace{-0.2in}
\begin{defn}\label{ddag}
Let $A$ be a unital \CA.
Denote by $CU(A)$ the closure of the subgroup
generated by commutators {of $U(A)$}.  If $u\in U(A),$ {its} image in {the quotient} $U(A)/CU(A)$ will be denoted by $\overline{u}.$
Let $B$ be another unital \CA\, and let $\phi: A\to B$ be a unital \hm.
{it is clear that} $\phi$ maps $CU(A)$ into $CU(B).$ {Let $\phi^{\ddag}$ denote the} induced \hm\ from $U(A)/CU(A)$
into $U(B)/CU(B)$.

Let $n\ge 1$ be any integer. Denote by $U_n(A)$ the unitary group
of $M_n(A),$ and denote by $CU(A)_n$ the closure of commutator subgroup
of $U_n(A).$ {Regard} $U_n(A)$ as a subgroup of $U_{n+1}(A)$ via the embedding 
$ u\mapsto \left(\begin{array}{cc}u&0\\0&1\end{array}\right)$ and denote by $U_{\infty}(A)$ the union
of {all} $U_n(A).$

{Consider the union $CU_\infty(A):=\bigcup_n CU_n(A)$. It is then a normal subgroup of $U_\infty(A)$, and the quotient $U(A)_{\infty}/CU_{\infty}(A)$ is in fact isomorphic to the inductive limit
of $U_n(A)/CU_n(A)$ (as abelian groups).} 
We will use $\phi^{\ddag}$ for the \hm\ {induced by $\phi$} from
$U_{\infty}(A)/CU_{\infty}(A)$ into $U_{\infty}(B)/CU_{\infty}(B)$.

\end{defn}

\begin{defn}\label{DDet}
Let $A$ be a unital \CA, and {let} $u\in U(A)_0.$ {Let} $u(t)\in C([0,1],A)$ be
a piecewise-smooth path of unitaries such that $u(0)=u$ and $u(1)=1.$
Then the de la Harpe--Skandalis determinant of $u(t)$ is defined by
$$
{\mathrm{Det}}(u(t))(\tau)=\frac{1}{2\pi i}\int_0^1\tau({{d}u(t)\over{{d}t}}u(t)^*) {d}t \tforal \tau\in \tr(A),
$$
{{which induces}}
 a \hm\, $${{\mathrm{Det}}:U(A)_0\to \aff(\tr(A))/\overline{\rho_A(K_0(A))}}.$$

{The determinant ${{\mathrm{Det}}}$ can be extended to a map from} $U_{\infty}(A)_0$ into $\aff(\tr(A))/\overline{\rho_A(K_0(A))}.$
{It is easy to see that the determinant} vanishes on the {closure} of commutator subgroup
of $U_{\infty}(A).$ {In fact, by a result of K. Thomsen (\cite{Thomsen-rims}),
the closure of the commutator subgroup is exactly the kernel of this map,
that is, it induces an isomorphism} $\overline{{\mathrm{Det}}}: U_{\infty}(A)_0/CU_{\infty}(A)\to \aff(\tr(A))/\overline{\rho_A(K_0(A))}.$
{Moreover, by (\cite{Thomsen-rims}),
one has the following short exact sequence}
\beq\label{Texact}
 0\to \aff(\tr(A))/\overline{\rho_A(K_0(A))}{\to} U_{\infty}(A)/CU_{\infty}(A){\stackrel{\varPi}{\to}} K_1(A)\to 0
\eneq
which splits ({with the embedding of $\aff(\tr(A))/\overline{\rho_A(K_0(A))}$ induced by $(\overline{{\mathrm{Det}}})^{-1}$}). We will fix a splitting map $s_1: K_1(A)\to U_{\infty}(A)/CU_{\infty}(A).$  {The notation $\varPi$ and} $s_1$
will be used late without further warning.

For each $\bar{u}\in s_1(K_1(A))$, select and fix one element $u_c\in\bigcup_{n=1}^\infty M_n(A)$ such that $\overline{u_c}=\bar{u}$. Denote this set by $U_c(A)$.

In the case that {$A$ has tracial rank at most one (see \ref{TA1} below)},
by Corollary 3.4 of \cite{Thomsen-rims}, one has
$$U_{\infty}(A)_0/CU_{\infty}(A)=U(A)_0/CU(A)$$ and thus the following splitting short exact sequence:
\beq\label{Texact-2}
 0\to \aff(\tr(A))/\overline{\rho_A(K_0(A))}\to U(A)/CU(A)\to K_1(A)\to 0.
\eneq
\end{defn}



%

\begin{defn}\label{DKL}

Let $A$ be a  unital \CA\, and let $C$ be a  separable \CA\, which satisfies the Universal Coefficient Theorem.
{Recall that $KL(C, A)$ is the quotient of $KK(C, A)$ modulo pure extensions.}  By {a} result of D{\u a}d{\u a}rlat and Loring in \cite{DL}, {one has}
\beq\label{N2-1}
{KL}(C,A)=\mathrm{Hom}_{\Lambda}(\underline{{K}}(C), \underline{{K}}(A)),
\eneq
where
$$
\underline{{K}}(B)=({K}_0(B)\oplus{K}_1(B))\oplus( \bigoplus_{n=2}^{\infty}({K}_0(B,\Z/n\Z)\oplus{K}_1(B,\Z/n\Z) ) )
$$
for any \CA\, $B.$ { {Then, in the rest of the paper,}} we will identify {$KL(C, A)$ with $\mathrm{Hom}_{\Lambda}(\underline{{K}}(C), \underline{{K}}(A))$}. 


{Denote by $\kappa_i: K_i(C)\to K_i(A)$ the \hm\, given by $\kappa$ with $i=0,1,$} and
denote by ${KL}(C,A)^{++}$ the set of those ${\kappa}\in \mathrm{Hom}_{\Lambda}(\underline{{K}}(C), \underline{{K}}(A))$ such that
$$
{\kappa_0}(\Kzero^+(C)\setminus\{0\})\subseteq \Kzero^{+}(A)\setminus \{0\}.
$$
Denote by ${KL}_e(C, A)^{++}$ the set of those elements ${\kappa}\in {KL}(C,A)^{++}$ such that ${\kappa_0}([1_C])=[1_A].$
Suppose that both $A$ and $C$ are unital, $\mathrm{T}(C)\not=\O$ and $\mathrm{T}(A)\not=\O.$
Let $\lambda_\mathrm{T}: \mathrm{T}(A)\to \mathrm{T}(C)$ be a continuous affine map.
{Let $h_0: K_0(C)\to K_0(A)$ be a positive \hm.}
We say $\lambda_{\mathrm{T}}$ is compatible with {$h_0$} if
for any projection $p\in \mathrm{M}_{\infty}(C),$  ${\lambda_{{\mathrm{T}}}(\tau)(p)}=\tau({h_0([p]))}$ for all $\tau\in \mathrm{T}(A).$
{Let ${\lambda}: \aff(\tr_{\mathrm{f}}(C))\to \aff(\tr(A))$ be an affine continuous map.
We say $\lambda$ and $h_0$ {are} compatible if $h_0$ is compatible to $\lambda_{{\mathrm{T}}},$ where $\lambda_{{\mathrm{T}}}:\tr(A)\to  \tr_{\mathrm{f}}(C)$ is the map {$\lambda_{\mathrm{T}}(\tau)(a)=\lambda(a^*)(\tau), \forall a\in C^+$ and $\tau\in\tr(A)$, where $a^*\in\aff(\tr_{\mathrm{f}}(C))$ is the affine function induced by $a$.} 
We say $\kappa$ and $\lambda$ (or {$\lambda_T$}) are compatible, if $\kappa_0$ is positive and
$\kappa_0$ and $\lambda$ {are} compatible. }

Denote by ${KLT}_{{e}}(C,A)^{++}$ the set of those pairs
$({\kappa}, \lambda_{{\mathrm{T}}})$ { (or,} ${(\kappa, {\lambda})}$){,} where ${\kappa}\in {KL}_e(C,A)^{++}$
and $\lambda_{{\mathrm{T}}}: \mathrm{T}(A)\to \mathrm{T}_{\mathtt{f}}(C)$ {(or, ${\lambda}:
\aff(\tr_{\mathrm{f}}(C))\to \aff(\tr(A)))$} is a continuous affine map
which is compatible with ${\kappa}.$
{If ${\lambda}$ is compatible with $\kappa$, {then} ${\lambda}$ maps $\rho_{{C}}(K_0({C}))$ into $\rho_{{A}}(K_0({A})).$
Therefore ${\lambda}$ induces a continuous \hm\, $\overline{{\lambda}}:
\aff(\tr_{{\mathrm{f}}}({C}))/\overline{\rho_{{C}}(K_0({C}))}\to \aff(\tr(A))/\overline{\rho_{{A}}(K_0(A))}.$}
Suppose that ${\gamma}: U_{\infty}({C})/CU_{\infty}({C})\to U_{\infty}({A})/CU_{\infty}({A})$ is a {continuous} \hm\,
{and $h_i: K_i(C)\to K_i(A)$ are \hm s for which $h_0$ is positive. We say {that} ${\gamma}$ and $h_1$ are compatible
if ${\gamma}(U_{\infty}(C)_0/CU_{\infty}(C))\subset  U_{\infty}(A)_0/CU_{\infty}(A)$ and ${\gamma}\circ s_1=s_1\circ h_1,$
we say {that} $h_0, h_1, \lambda$ and $\gamma$ are compatible, if ${\lambda}$ and $h_0$ are compatible, {$\gamma$ and $h_1$ are compatible} and }
$$
{\overline{\mathrm{Det}}_{{A}}\circ{\gamma}|_{U_{\infty}({C})_0/CU_{\infty}({C})}=  \overline{{\lambda}}\circ \overline{\mathrm{Det}}_{{C}},}
$$
and  we also say {that} $\kappa, {\lambda}$ and  ${\gamma}$ are compatible, {if $\kappa_0,\kappa_1, {\lambda}$ and ${\gamma}$ are compatible.}

\end{defn}





\begin{NN}\label{snn}
For each prime number {$p$}, let $\eps_p$ be a number in $\{0, 1, 2, ..., +\infty\}$. Then a supernatural number is the formal product $\mathfrak{p}=\prod_p p^{\eps_p}$. {Here we insist that
 there are either infinitely many $p$ in the product, or,
 one of $\eps_p$ is infinite.} Two supernatural numbers $\mathfrak{p}=\prod_p p^{\eps_p(\p)}$ and $\mathfrak{q}=\prod_p p^{\eps_p(\q)}$ are relatively prime if for any prime number $p$, at most one of $\eps_p(\p)$ and $\eps_p(\q)$ is nonzero. A supernatural number $\p$ is called of infinite type if for any prime number, either $\eps_p(\p)=0$ or $\eps_p(\p)=+\infty$.
{For each }supernatural number $\mathfrak{p}$, there is a UHF-algebra $M_\mathfrak{p}$ associated to it, and the UHF-algebra is unique up to isomorphism  (see \cite{Dix-JFA}).
\end{NN}

\begin{NN}\label{ratn-pq}
{Denote by} $Q$ the UHF-algebra with ${(K_0(Q), K_0(Q)_+, [1_A])=(\Q, \Q_+, 1)}$
(the supernatural number associated to $Q$ is $\prod_p p^{+\infty}$), and let $M_\mathfrak{p}$ and $M_\mathfrak{q}$ be two UHF-algebras with $M_\mathfrak{p}\otimes M_\mathfrak{q}\cong Q$ and $\mathfrak{p}=\prod_p p^{\eps_p(\p)}$ and $\mathfrak{q}=\prod_p p^{\eps_p(\q)}$ relatively prime. Then it follows that $\p$ and $\q$ are of infinite type. Denote by
\beq\nonumber
\mathbb Q_{\mathfrak{p}}&=&\mathbb Z[\frac{1}{p_1}, ..., \frac{1}{p_n},...]\subseteq\mathbb Q,\quad\mathrm{where}\ \eps_{p_n}(\p)=+\infty {\andeqn}\\\nonumber
\mathbb Q_{\mathfrak{q}}&=&\mathbb Z[\frac{1}{p_1}, ..., \frac{1}{p_n},...]\subseteq \mathbb Q,\quad\mathrm{where}\ \eps_{p_n}(\q)=+\infty.
\eneq

Note that {$(K_0(M_\p), K_0(M_\p)_+, [1_{M_\p}])=(\Q_\p, (\Q_p)_+,1)$ and
$(K_0(M_\q), K_0(N_\q)_+,[1_{M_\q}])=(\Q_\q, (\Q_\q)_+, 1).$}
Moreover, $\mathbb Q_\mathfrak{p}\cap\mathbb Q_{\mathfrak{q}}={\Z}$
and $\mathbb Q=\mathbb Q_\mathfrak{p}+\mathbb Q_{\mathfrak{q}}$.
\end{NN}

\begin{NN}\label{zpq}
For any pair of {relatively prime} supernatural numbers $\mathfrak{p}$ and $\mathfrak{q}$, define the C*-algebra $\mathcal Z_{\mathfrak{p}, \mathfrak{q}}$ by
$$\mathcal Z_{\mathfrak{p}, \mathfrak{q}}=\{f: [0, 1]\to M_\p\otimes M_\q;\ f(0)\in M_{\mathfrak{p}}\otimes 1_{M_{\mathfrak{q}}}\ \textrm{and}\ f(1)\in 1_{M_{\mathfrak{p}}}\otimes M_{\mathfrak{q}}\}.$$

The Jiang-Su algebra $\mathcal Z$ is the unital inductive limit of dimension drop interval algebras with unique trace, and $(\Kzero(\mathcal Z), \Kzero(\mathcal Z), [1])=(\Int, \Int^+, 1)$ (see \cite{JS-Z}). By Theorem 3.4 of \cite{RW-Z}, for any pair of relatively prime supernatural numbers $\p$ and $\q$ of infinite type, the Jiang-Su algebra $\mathcal Z$ has a stationary inductive limit decomposition:
\begin{displaymath}
\xymatrix{
\mathcal{Z}_{\frak{p}, \frak{q}}\ar[r] &\mathcal{Z}_{\frak{p}, \frak{q}}\ar[r]&\cdots \ar[r]& \mathcal{Z}_{\frak{p}, \frak{q}}\ar[r]&\cdots\ar[r]&\mathcal Z
}.
\end{displaymath}

By Corollary 3.2 of \cite{RW-Z}, the C*-algebra $\mathcal Z_{\p, \q}$ absorbs
the Jiang-Su algebra:
$\mathcal{Z}_{\frak{p}, \frak{q}}\otimes\mathcal Z\cong
\mathcal{Z}_{\frak{p}, \frak{q}}.$
{A \CA\, $A$ is said to be {\em ${\cal Z}$-stable} if
$A\otimes {\cal Z}\cong A.$}


\end{NN}


\begin{defn}\label{TA1}
A unital simple C*-algebra $A$ has tracial rank at most one,
denoted by $\mathrm{TR}(A)\leq 1$, if for any finite subset
$\mathcal F\subset A$, any $\ep>0$, and any nonzero $a\in A^+$,
there exist {a} nonzero projection $p\in A$ and a C*-subalgebra
{$I\cong \bigoplus_{i=1}^m C(X_i)\otimes M_{r(i)}$}
with $1_I=p$ for some finite CW complexes $X_i$ with dimension at most  one
such that
\begin{enumerate}
\item $\norm{[x,\ p]}\leq\ep$ for any $x\in\mathcal F$,
\item for any $x\in\mathcal F$, there is $x'\in I$ such that $\norm{pxp-x'}\leq\ep$, and
\item $1-p$ is Murray-von Neumann equivalent to a projection in $\overline{aAa}$.
    \end{enumerate}

{Moreover}, if {the C*-subalgebra} $I$ {above} can be chosen to be a finite dimensional \CA,
then $A$ is said to have tracial rank zero, and in such case, we write
$\mathrm{TR}(A)=0.$ It is a theorem of Guihua Gong \cite{Gong-AH} that every unital simple AH-algebra
with no dimension growth has tracial rank  at most one.
It has been proved in \cite{Lnclasn} that every ${\cal Z}$-stable unital simple AH-algebra
has tracial rank at most one.
\end{defn}

\begin{defn}\label{Classes}
Denote by ${\cal N}$ the class of all separable amenable \CA s which satisfy the Universal Coefficient Theorem {(UCT)}.
Denote by ${\cal C}$ the class of all simple  \CA s $A$ for which ${\mathrm{TR}}(A\otimes M_\p)\le 1$ for some
UHF-algebra $M_\p,$ where $\p$ is a supernatural number of infinite type.
Note,  by \cite{L-N-Range}, that, if $\mathrm{TR}(A\otimes M_\p)\le 1$ for some supernatural number $\p$ then
$\mathrm{TR}(A\otimes M_\p)\le 1$ for all supernatural number $\p.$

Denote by ${\cal C}_0$ the class of all simple \CA s $A$ for which ${{\mathrm{TR}}}(A\otimes M_\p)=0$
for some supernatural number $\p$ {of infinite type} (and hence for all supernatural number $\p$ {of infinite type}).
\end{defn}

\begin{thm}[Theorem 5.10 \cite{Lin-AU11}]\label{Uniq}
Let $C$ be a unital AH-algebra
and let $A$ be a unital simple
C*-algebra with $\mathrm{TR}(A) \leq 1$. Suppose that $\phi,\psi : C\to A$ are two unital monomorphisms. Then $\phi$ and $\psi$ are approximately unitarily equivalent if and only if
$$[\phi] = [\psi]\ \mathrm{in}\ KL(C, A),$$
$$\phi_\sharp = \psi_\sharp\quad \textrm{and}\quad \phi^{\ddag} = \psi^{\ddag}.$$
\end{thm}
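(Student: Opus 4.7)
The necessity of the three conditions is standard: if $u_n^*\phi(\cdot)u_n\to\psi(\cdot)$ in norm pointwise, then the induced maps on $\underline K$, on $\aff(\tr(A))$, and on $U_\infty/CU_\infty$ are all invariant under pointwise norm limits of unital \hm s, so $\psi$ automatically inherits the $KL$-class, $\sharp$-map, and $\ddag$-map of $\phi$.

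For the sufficient direction, write $C=\varinjlim C_n$ with $C_n=\bigoplus_j P_{n,j}M_{r(n,j)}(C(X_{n,j}))P_{n,j}$ and $X_{n,j}\in\mathbf X$, as provided by property (J). By the density of $\bigcup_n C_n$ in $C$, it suffices to establish the following local uniqueness statement: for each $n$, each finite $\mathcal F\subset C_n$, and every $\ep>0$, there is a unitary $u\in A$ with $\|u^*\phi(x)u-\psi(x)\|<\ep$ for all $x\in\mathcal F$, using only that the three invariants of $\phi|_{C_n}$ and $\psi|_{C_n}$ agree (which they do by restriction). Since each summand of $C_n$ is a corner of a homogeneous algebra, a further standard reduction brings us to two unital monomorphisms $\phi,\psi:M_r(C(X))\to A$ for a single connected $X\in\mathbf X$.

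When $X\in\mathbf X_0$, the established uniqueness machinery for homomorphisms from $C(X)$ into a simple $A$ with $\mathrm{TR}(A)\le 1$ applies directly. The hypotheses $[\phi]=[\psi]$ and $\phi_\sharp=\psi_\sharp$ supply the $K$-theoretic and tracial matching, while $\phi^\ddag=\psi^\ddag$, combined with Thomsen's splitting exact sequence for $U_\infty(A)/CU_\infty(A)$, simultaneously pins down the determinant term and the $K_1$ lift of the splitting. For $X$ of dimension at most one this is the classical one-dimensional uniqueness; for $X$ with property (H), the defining clause of (H) is precisely what converts the finite-dimensional approximate intertwining produced from the $\mathrm{TR}(A)\le 1$ approximation of $A$ by interval algebras into an honest path of \hm s, from which the required unitary is produced by standard path-integration along the interval.

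The principal obstacle is the remaining case, in which $X$ is a finite CW complex with torsion $K_1(C(X))$ but is not automatically in $\mathbf X_0$. My plan is to compare $\phi$ and $\psi$ with auxiliary homomorphisms factoring through $C(Y)$ for a suitable wedge $Y\in\mathbf X_0$ of circles and Moore spaces realizing $K_*(C(X))$, apply the $\mathbf X_0$ uniqueness to those auxiliary maps, and then transfer the result back to $C(X)$ using the torsion assumption on $K_1(C(X))$. The compatibility $\phi^\ddag=\psi^\ddag$ is essential at this step, since the torsion $K_1$ contribution to $U_\infty(C(X))/CU_\infty(C(X))$ is precisely what must match consistently between the $X$ and $Y$ sides. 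Once local uniqueness is proved for every $X\in\mathbf X$, the density-based approximation assembles these local statements into the global approximate unitary equivalence of $\phi$ and $\psi$ on all of $C$.
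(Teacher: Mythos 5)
This statement is not proved in the present paper; it is quoted as Theorem 11.7 of \cite{Lin-AU} and used as a black-box ingredient. So there is no in-paper argument to compare against, and your proposal has to be judged on its own.

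Your overall architecture is sound and matches the standard shape of such uniqueness theorems: the necessity direction is routine; the sufficiency direction should reduce, by the density argument you describe, to a local uniqueness statement on the building blocks $C_n$, and then to a single homogeneous summand $P M_r(C(X)) P$ with $X\in\mathbf X$. (Some bookkeeping is needed for the corner and direct-sum reductions — one must match the central projections up to unitary equivalence, which uses stable rank one coming from $\mathrm{TR}(A)\le 1$ — but this is routine.) The restriction of the invariants to $C_n$ works as you say, including the $\ddag$-map since $(\phi\circ\imath_n)^\ddag=\phi^\ddag\circ\imath_n^\ddag$.

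The genuine gap is in the core case you label ``the principal obstacle'', where $X$ is a finite CW complex with torsion $K_1(C(X))$. Your plan is to compare $\phi,\psi: C(X)\to A$ with auxiliary homomorphisms ``factoring through $C(Y)$'' where $Y\in\mathbf X_0$ realizes $K_*(C(X))$. But a unital $*$-homomorphism $C(X)\to A$ does not, in general, factor through $C(Y)$ for a different space $Y$; the existence of an abstract $KK$-equivalence between $C(X)$ and $C(Y)$ (which follows from the UCT once $K_*(C(X))\cong K_*(C(Y))$) produces a class in $KK$, not a homomorphism, and in particular gives no mechanism to transport the tracial data $\phi_\sharp$ or the $U_\infty/CU_\infty$ data $\phi^\ddag$, both of which are invariants of the actual algebra and its maps, not merely of $KK$-classes. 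So the intended ``transfer back to $C(X)$'' step does not make sense as written. This is not a cosmetic issue: the theorem you are trying to prove is precisely about the interaction between the $KL$-data and the extra invariants $\phi_\sharp$ and $\phi^\ddag$, and that interaction is lost once you replace $C(X)$ by an abstract $KK$-equivalent model. The resolution in \cite{Lin-AU} instead works directly with $C(X)$, using the structure theory of $A$ (tracial approximation by one-dimensional building blocks) together with careful control of Bott elements, rotation maps, and the Thomsen exact sequence \eqref{Texact} to measure the discrepancy between $\phi$ and $\psi$ on $U_\infty(C(X))/CU_\infty(C(X))$ and then cancel it; the torsion hypothesis on $K_1(C(X))$ enters precisely because the $K_1$-part of the determinant obstruction vanishes on torsion. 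Likewise, your description of how property (H) is used — ``converts the finite-dimensional approximate intertwining ... into an honest path of \hm s, from which the required unitary is produced by standard path-integration'' — garbles the role of (H): it is a finite-dimensional homotopy rigidity property that feeds into a basic homotopy lemma, not an integration procedure. So while the reduction to the local case is correct, the local case itself — which is where all the content of Theorem \ref{Uniq} resides — is not established by your sketch.
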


\begin{rem}
{One of the main purposes of this paper is to generalize this result so that $A$ can be allowed to be in the class $\mathcal C$, and $C$ can be rationally AH; that is, $C\otimes U$ is an AH-algebra for all UHF-algebra $U$ of infinite type.}
\end{rem}

\begin{NN}
Let $A$ and $B$ be two unital C*-algebras. Let $h: A\to B$ be a homomorphism and $v\in\mathrm{U}(B)$ be such that $$[h(g),\ v]=0\quad \textrm{for any}\ g\in A.$$ We then have a homomorphism $\overline{h}: A\otimes\mathrm{C}(\mathbb T)\to B$ defined by $f\otimes g\mapsto h(f)g(v)$ for any $f\in A$ and $g\in\mathrm{C}(\mathbb T)$. The tensor product induces two injective homomorphisms:
$$\beta^{(0)}:\Kzero(A)\to\Kone(A\otimes\mathrm{C}(\mathbb T)){\andeqn}
\beta^{(1)}:\Kone(A)\to\Kzero(A\otimes\mathrm{C}(\mathbb T)).$$
The second one is the usual Bott map. Note that, in this way, one writes $${K}_i(A\otimes\mathrm{C}(\mathbb T))={K}_i(A)\oplus\beta^{(i-1)}({K}_{i-1}(A)).$$ Let us use $\widehat{\beta^{(i)}}:{K}_i(A\otimes\mathrm{C}(\mathbb T))\to\beta^{(i-1)}({K}_{{i-1}}(A))$ to denote the {quotient map}.

For each integer $k\geq2$, one also has the following injective homomorphisms:
$$\beta_k^{(i)}:{K}_i(A, \Int/k\Int)\to{K}_{i-1}(A\otimes\mathrm{C}(\mathbb T), \Int/k\Int),\quad i=0, 1.$$ Thus, we write
$${K}_i(A\otimes\mathrm{C}(\mathbb T), \Int/k\Int)={K}_i(A, \Int/k\Int)\oplus\beta^{(i-1)}({K}_{i-1}(A), \Int/k\Int).$$ Denote by $\widehat{\beta^{(i)}_k}:{K}_i(A\otimes\mathrm{C}(\mathbb T), \Int/k\Int)\to\beta^{(i-1)}({K}_{i-1}(A), \Int/k\Int)$ the map analogous to $\widehat{\beta^{(i)}}$. If $x\in\underline{{K}}(A)$, we use $\boldsymbol{\beta}(x)$ for $\beta^{(i)}(x)$ if $x\in{K}_i(A)$ and for $\beta^{(i)}_k(x)$ if $x\in{K}_i(A, \Int/k\Int)$. Thus we have a map $\boldsymbol{\beta}: \underline{{K}}(A)\to\underline{{K}}(A\otimes\mathrm{C}(\mathbb T))$ as well as $\widehat{\boldsymbol{\beta}}:\underline{{K}}(A\otimes \mathrm{C}(\mathbb T))\to\boldsymbol{\beta}( \underline{{K}})$. Therefore, we may write $\underline{{K}}( A\otimes \mathrm{C}(\mathbb T))=\underline{{K}}(A)\oplus\boldsymbol{\beta}(\underline{{K}}(A))$.
On the other hand, $\overline{h}$ induces homomorphisms $$\overline{h}_{*i, k}: {K}_i(A\otimes\mathrm{C}(\mathbb T), \Int/k\Int)\to {K}_i(B, \Int/k\Int),$$ $k=0, 2, ... ,$ and $i=0, 1$.

We use $\mathrm{Bott}(h, v)$ for all homomorphisms $\overline{h}_{*i, k}\circ\beta_k^{(i)}$, and we use $\mathrm{bott}_1(h, v)$ for the homomorphism $\overline{h}_{1, 0}\circ\beta^{(1)}:\Kone(A)\to\Kzero(B)$, and $\mathrm{bott}_0(h, v)$ for the homomorphism $\overline{h}_{0, 0}\circ\beta^{(0)}:\Kzero(A)\to\Kone(B)$.
{$\mathrm{Bott}(h, v)$ as well as  $\mathrm{bott}_i(h, v)$ $(i=0,1$) may be defined for a unitary $v$ which
only approximately commutes {with} $h.$ In fact, given a finite subset $\mathcal P\subset\underline{{K}}(A)$, there exists a finite subset $\mathcal F\subset A$ and $\delta_0>0$ such that $$\mathrm{Bott}(h, v)|_{\mathcal P}$$ is well defined if $$\norm{[h(a), v]}<\delta_0$$ for all $a\in\mathcal F$. See 2.11 of \cite{Lin-Asy}, 2.10 of \cite{LnHomtp}, 2.21 of \cite{Lin-LAH} for more details.}

\end{NN}

We have the following generalized Exel's formula for the traces of Bott elements.
\begin{thm}[Theorem 3.5 of \cite{Lnclasn}]\label{Exel}
There is $\delta>0$ satisfying the following: Let $A$ be
a unital separable simple C*-algebra with $\mathrm{TR}(A)\leq 1$ and let $u, v \in U(A)$ be two unitaries such that $\|uv-vu\|<\delta$.
Then $\mathrm{bott}_1(u, v)$ is well defined and
$$\tau(\mathrm{bott}_1(u, v))=\frac{1}{2\pi i}(\tau(\log(vuv^*u^*)))$$ for all $\tau \in \mathrm{T}(A)$.
\end{thm}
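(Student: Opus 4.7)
\medskip
The plan is to reduce the identity to the classical Exel formula on matrix algebras, using the tracial approximation built into $\mathrm{TR}(A) \leq 1$. The universal constant $\delta>0$ should be small enough that, for any unitaries $u',v'$ with $\|u'v'-v'u'\|<3\delta$, the element $v'u'v'^*u'^*$ has spectrum bounded away from $-1$ (so that $\log$ is defined on the principal branch and is norm-continuous), and $\mathrm{bott}_1(u',v')$ is well defined and stable under small perturbations in the sense of 2.11 of \cite{Lin-Asy}.

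The first step is to handle the building-block case $I=\bigoplus_{i=1}^m C(X_i)\otimes M_{r(i)}$ with each $X_i$ a finite CW complex of dimension at most one. For each $x\in X_i$, evaluation gives unitaries in $M_{r(i)}(\Comp)$ satisfying the same commutator bound, where the classical Exel formula applies directly. Both $\mathrm{tr}(\mathrm{bott}_1(u(x),v(x)))$ and $\tfrac{1}{2\pi i}\mathrm{tr}(\log(v(x)u(x)v(x)^*u(x)^*))$ depend norm-continuously on $x$, so since every tracial state on $I$ is a convex integral of such pointwise normalized traces, integrating the pointwise identity yields the formula on $I$.

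Next, for a general simple $A$ with $\mathrm{TR}(A)\leq 1$, I fix $\tau\in\mathrm{T}(A)$ and $\epsilon>0$ to be chosen. By simplicity (so all traces agree up to uniform control) and Definition \ref{TA1}, I can choose a projection $p\in A$ and a sub-C*-algebra $I\subset pAp$ of the above form with $1_I=p$ such that $\|[u,p]\|,\|[v,p]\|<\epsilon$, there exist unitaries $u_I,v_I\in I$ with $\|pup-u_I\|,\|pvp-v_I\|<\epsilon$, and $\tau'(1-p)<\epsilon$ for every $\tau'\in\mathrm{T}(A)$. For $\epsilon$ small compared to $\delta$, the cut-down pair $(u_I,v_I)$ satisfies the same near-commutation hypothesis in $I$, so the first step applies. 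Perturbing $u$ and $v$ to $\tilde u, \tilde v$ that exactly commute with $p$ and split as $u_I\oplus u',v_I\oplus v'$ with $u',v'\in(1-p)A(1-p)$, one has additivity of both sides: $\mathrm{bott}_1(\tilde u,\tilde v) = \mathrm{bott}_1(u_I,v_I) + \mathrm{bott}_1(u',v')$, and $\log(\tilde v\tilde u\tilde v^*\tilde u^*)$ is the corresponding direct sum. The $(1-p)$-corner contribution to either side is bounded in $\tau$ by $O(\tau(1-p))=O(\epsilon)$, while on the $I$-part the formula holds by the first step (transferred back to $pAp$ through the $\epsilon$-approximation). Letting $\epsilon\to 0$ yields the identity.

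The main obstacle is Step 3's bookkeeping of errors: I must verify that the small norm-perturbation from $(u,v)$ to $(\tilde u,\tilde v)$, together with the replacement of $(pup,pvp)$ by $(u_I,v_I)$ in $I$, changes $\tau(\mathrm{bott}_1(-,-))$ and $\tau(\log(-))$ by at most $O(\epsilon)$, uniformly in $\tau$. This is exactly what the stability clause in 2.11 of \cite{Lin-Asy} provides for the Bott side, while for the log side it follows from norm-continuity of the principal branch on the relevant spectral region. All remaining estimates are routine once $\delta$ is chosen small enough at the outset to accommodate these perturbations.
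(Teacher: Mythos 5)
The paper itself does not prove this statement---it is quoted verbatim as Theorem~3.5 of \cite{Lnclasn}---so there is no in-paper argument to compare against. That said, your outline is a correct and natural way to prove it, and it is very plausibly the route taken in the cited source: reduce, via the tracial approximation built into $\mathrm{TR}(A)\le 1$, to a building block $I=\bigoplus_i C(X_i)\otimes M_{r(i)}$ with $\dim X_i\le 1$, where the identity follows from the classical Exel--Loring matrix formula applied pointwise in $x\in X_i$ and integration against the measure representing the trace; then absorb the $(1-p)$-corner into an error of size $O(\tau(1-p))$, which can be made uniformly small over $\mathrm{T}(A)$ using comparison in the simple $\mathrm{TR}\le 1$ setting. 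The places you flag as needing care are genuine but standard: you need a lemma beyond the raw Definition~\ref{TA1} to produce actual unitaries $u_I,v_I\in I$ within $\epsilon$ of $pup,pvp$ (and $\tau'(1-p)<\epsilon$ uniformly, which uses simplicity and comparison rather than the definition alone); you should invoke homotopy invariance of the Bott projection to get the exact equality $\mathrm{bott}_1(u,v)=\mathrm{bott}_1(\tilde u,\tilde v)$ in $K_0(A)$, not merely that the two traces are close; and $\delta$ must be fixed so small at the outset that $3\delta$ remains below the universal matrix threshold and keeps $\mathrm{sp}(vuv^*u^*)$ bounded away from $-1$, giving a uniform bound ($\le\pi$) on $\|\log(\cdot)\|$ so that the $(1-p)$-corner and perturbation errors are controlled uniformly in $\tau$. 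With those details supplied the argument is sound.
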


\section{Rotation maps}

In this section, we collect several facts on {the} rotation map which are going to be used frequently in the rest of the paper. Most of them {can be found in the literature.}

\begin{defn}\label{D-Maf}
Let $A$ and $B$ be two unital C*-algebras, and let $\psi$ and $\phi$ be two
unital monomorphisms from $B$ to $A$. Then the mapping torus $M_{\phi, \psi}$ is the C*-algebra defined by $$M_{\phi, \psi}:=\{f\in\textrm{C}([0, 1], A);\ f(0)=\phi(b)\ \textrm{and}\ f(1)=\psi(b)\ \textrm{for some}\ b\in B\}.$$
%
%
For any $\psi, \phi\in\mathrm{Hom}(B, A)$, denoting by $\pi_0$ the evaluation of $M_{\phi, \psi}$ at $0$, we have the short exact sequence
\begin{displaymath}
\xymatrix{
0\ar[r]&\mathrm{S}(A)\ar[r]^{\imath}&M_{\phi,\psi}\ar[r]^{\pi_0}&B\ar[r]&0,}
\end{displaymath}
where $\mathrm{S}(A)=C_0((0,1),A).$
If {$\phi_{*i}=\psi_{*i}$ ($i=0,1$),}
then the corresponding six-term exact sequence breaks down to the following two extensions:
\begin{displaymath}
\eta_{{i}}(M_{\phi, \psi}):\ \xymatrix{
0\ar[r]& K_{i+1}(A)\ar[r]& K_{{i}}(M_{\phi, \psi})\ar[r]& K_{i}(B)\ar[r]&0,
}\,\,\,{(i=0,1).}
\end{displaymath}
\end{defn}

\begin{NN}
Suppose that, in addition,
\beq\label{Dr-2}
\tau\circ \phi=\tau\circ \psi\tforal \tau\in \textrm{T}(A).
\eneq
For any {continuous}  piecewise smooth path of unitaries $u(t)\in M_{\phi, \psi}$, consider the path of unitaries $w(t)=u^*(0)u(t)$ in $A$. Then it is a continuous {and piecewise smooth}  path with $w(0)=1$ {and
$w(1)=u^*(0)u(1).$}  Denote by $R_{\phi, \psi}(u)=\mathrm{Det}(w)$ the determinant of $w(t)$.
It is clear {{with the assumption of (\ref{Dr-2})}} that $R_{\phi, \psi}(u)$ depends only on the homotopy class of $u(t)$. Therefore, it induces a {\hm}, denoted by $R_{\phi, \psi}$, from $\Kone(M_{\phi, \psi})$ to $\mathrm{Aff}(\mathrm{T}(A)).$
\end{NN}


\begin{defn}\label{R0}
Fix two unital C*-algebras $A$ and $B$ with $\tr(A)\neq\O$. {Define} $\mathcal R_0$ to be the subset of $\mathrm{Hom}(\Kone(B), \aff(\tr(A)))$ consisting of those homomorphisms $h\in \mathrm{Hom}(\Kone(B), \aff(\tr(A)))$
{for which}  there exists a homomorphism $d: \Kone(B)\to\Kzero(A)$ such that $$h=\rho_A\circ d.$$ It is clear that $\mathcal R_0$ is a subgroup of $\mathrm{Hom}(\Kone(B), \aff(\tr(A)))$.
\end{defn}

\begin{NN}\label{R-bar}
If $[\phi]=[\psi]$ in $KK(B, A)$, {then} the exact sequences $\eta_i(M_{\phi, \psi})$ ($i=0, 1$) above  split. In particular, there is a lifting $\theta: \Kone(B)\to\Kone(M_{{\phi, \psi}})$. Consider the map $$R_{\phi, \psi}\circ\theta: \Kone(B)\to\aff(\tr(A)).$$
If a different lifting $\theta'$ is chosen, then,
$\theta-\theta'$ maps  $K_1(B)$ into  $K_0(A).$ Therefore  $$R_{\phi, \psi}\circ\theta-R_{\phi, \psi}\circ\theta'\in \mathcal{R}_0.$$

Then {define} $$\overline{R}_{\phi, \psi}=[R_{\phi, \psi}\circ\theta]\in \mathrm{Hom}(\Kone(B), \aff(\tr(A)))/\mathcal R_0.$$

If $[\phi]=[\psi]$ in $KL(B, A),$ then the exact sequences $\eta_i(M_{\phi, \psi})$ ($i=0, 1$) are pure, i.e., any finitely generated subgroup in the quotient groups has a lifting. In particular, for any finitely generated subgroup $G\subseteq \Kone(B)$, one has a map
$$R_{\phi, \psi}\circ\theta_G: G\to\aff(\tr(A)),$$
where $\theta_G: G\to K_1(M_{\phi, \psi})$ is a lifting.
Let $G\subset K_1(B)$ be a finitely generated subgroup.
Denote by ${\cal R}_{0, G}$ the set of those elements $h$ in $\mathrm{Hom}(G, {\mathrm{Aff}}(T(A)))$ such that there exists a \hm\, $d_G: G\to K_0(A)$ such that
$h|_{G}=\rho_A\circ d_G.$

If $[\phi]=[\psi]$ in $KL(B,A)$ and
$R_{\phi, \psi}({\Kone}(M_{\phi, \psi}))\subset \rho_A(K_0(A)),$
then $\theta_G\in {\cal R}_{0,G}$ for any
finitely generated subgroup $G\subset K_1(B)$ and any lifting $\theta_G.$
In this case, we will also write
$$
\overline{R}_{\phi, \psi}=0.
$$


See 3.4 of \cite{Lnclasn} for more details.

\end{NN}

\begin{lem}[Lemma 9.2 of \cite{Lnclasn}]\label{lem-dense}
Let $C$ and $A$ be {unital} C*-algebras {with} $\tr(A)\not=\O$.
Suppose that $\phi,\,\psi: C\to A$ are two unital homomorphisms such that
$$
[\phi]=[\psi]\ \mathrm{in}\ KL(C, A),\,\,\phi_{\sharp}=\psi_{\sharp}\andeqn \phi^{\ddag}=\psi^{\ddag}.
$$
Then the image of $R_{\phi, \psi}$ is in the $\overline{\rho_A(\Kzero(A))}{\subseteq} \mathrm{Aff}(\mathrm{T}(A))$.
\end{lem}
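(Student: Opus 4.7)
My plan is to decompose $K_1(M_{\phi,\psi})$ via the exact sequence from \ref{D-Maf} and analyze $R_{\phi,\psi}$ separately on the two summands, using $\phi_\sharp=\psi_\sharp$ to set up, Lemma \ref{DrL} to handle the $K_0(A)$-piece, and $\phi^\ddag=\psi^\ddag$ together with the de la Harpe--Skandalis machinery of Definition \ref{DDet} to handle the $K_1(C)$-piece.

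First I would check that condition (\ref{Dr-2}) holds. The hypothesis $\phi_\sharp=\psi_\sharp$, evaluated on the affine function $\hat c\in\aff(\tr(C))$ determined by any $c\in C$, forces $\tau\circ\phi(c)=\tau\circ\psi(c)$ for every $\tau\in\tr(A)$. So $R_{\phi,\psi}$ is defined. Next, $[\phi]=[\psi]$ in $KL(C,A)$ implies $\phi_{*i}=\psi_{*i}$ for $i=0,1$, so the six-term sequence for the mapping torus collapses to
\begin{equation*}
0\longrightarrow K_0(A)\xrightarrow{\;[\imath]_1\;}K_1(M_{\phi,\psi})\xrightarrow{\;(\pi_0)_{*1}\;}K_1(C)\longrightarrow 0,
\end{equation*}
and $KL$-equivalence makes this sequence pure, so finitely generated subgroups of $K_1(C)$ lift (as in \ref{R-bar}).

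On the image of $[\imath]_1$, Lemma \ref{DrL} gives $R_{\phi,\psi}\circ[\imath]_1=\rho_A$, whose image lies in $\rho_A(K_0(A))\subseteq\overline{\rho_A(K_0(A))}$. For a class $[v]\in K_1(C)$ with $v\in U_n(C)$, I would pick a lift in $K_1(M_{\phi,\psi})$ represented by a continuous piecewise smooth path of unitaries $u(t)\in M_n(A)$ with $u(0)=\phi(v)$ and $u(1)=\psi(v)$. Setting $w(t)=u(0)^*u(t)$, this is a path in $U_n(A)_0$ from $1$ to $\phi(v)^*\psi(v)$, and by the definition of $R_{\phi,\psi}$, the lifted class is sent to $\mathrm{Det}(w)\in\aff(\tr(A))$. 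The hypothesis $\phi^\ddag=\psi^\ddag$ says $\overline{\phi(v)}=\overline{\psi(v)}$ in $U_\infty(A)/CU_\infty(A)$, equivalently $\phi(v)^*\psi(v)\in CU_\infty(A)$; since the induced determinant $\overline{\mathrm{Det}}:U_\infty(A)_0/CU_\infty(A)\to\aff(\tr(A))/\overline{\rho_A(K_0(A))}$ of Definition \ref{DDet} is well defined and vanishes on $CU_\infty(A)$, the value $\mathrm{Det}(w)$ necessarily lies in $\overline{\rho_A(K_0(A))}$.

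Finally, any $x\in K_1(M_{\phi,\psi})$ can be written as $\tilde x+y$ where $\tilde x$ lifts $(\pi_0)_{*1}(x)\in K_1(C)$ (purity applied to the cyclic subgroup generated by $(\pi_0)_{*1}(x)$) and $y\in[\imath]_1(K_0(A))$; additivity of $R_{\phi,\psi}$ then places $R_{\phi,\psi}(x)$ in $\overline{\rho_A(K_0(A))}$. The step I expect to be the main obstacle is the $K_1(C)$-piece: one must verify that the rotation-map value on a lift coincides cleanly with the de la Harpe--Skandalis determinant evaluated on a path with endpoint $\phi(v)^*\psi(v)$, and confirm that the ambiguity coming from different choices of lift or of path contributes only elements of $\rho_A(K_0(A))$ (so that, after taking closure, the statement holds). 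Once that identification is secure, Thomsen's isomorphism $\overline{\mathrm{Det}}$ combined with $\phi^\ddag=\psi^\ddag$ finishes the argument.
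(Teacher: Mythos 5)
Your proof is correct and follows essentially the same route as the paper's: both use $\phi^\ddag=\psi^\ddag$ to deduce $\phi(u)^*\psi(u)\in CU_n(A)$, observe via the de la Harpe--Skandalis/Thomsen map that the determinant of any piecewise smooth path from this element to $1$ lands in $\overline{\rho_A(K_0(A))}$, and identify that determinant with $R_{\phi,\psi}$ evaluated on a lift of $[u]$. The step you flag as a possible obstacle is in fact definitional (by \ref{R-bar}, $R_{\phi,\psi}$ of a path $v(t)$ from $\phi(u)$ to $\psi(u)$ is by construction $\mathrm{Det}(v(0)^*v(t))$, and conjugating by the constant $\psi(u)^*$ on the left leaves the trace integrand unchanged), and the ambiguity between lifts is exactly $[\imath]_1(K_0(A))$, which Lemma \ref{DrL} sends into $\rho_A(K_0(A))\subseteq\overline{\rho_A(K_0(A))}$ — precisely as you argue.
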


\begin{proof}
Let $z\in K_1(C).$ Suppose that $u\in U_n(C)$ for some integer $n\ge 1$ such that $[u]=z.$
Note that $\psi(u)^*\phi(u)\in CU_n(A).$
Thus, by \ref{DDet},  for any continuous and piecewise smooth path of unitaries $\{w(t): t\in [0,1]\}\subset U(A)$
with $w(0)=\psi(u)^*\phi(u)$ and $w(1)=1,$
\beq\label{lem-dense-1}
{\mathrm{Det}}(w)(\tau)=\int_0^1\tau({dw(t)\over{dt}}w(t)^*) dt\in \overline{\rho_A(K_0(A))}.
\eneq
Suppose that $\{(v)(t): t\in [0,1]\}$ is a continuous and piecewise smooth path of unitaries in $U_n(A)$ with
$v(0)=\phi(u)$ and $v(1)=\psi(u).$ Define $w(t)=\psi(u)^*v(t).$ Then
$w(0)=\psi^*(u)\phi(u)$ and $w(1)=1.$
Thus, by (\ref{lem-dense-1}),
\beq
R_{\phi, \psi}(z)(\tau)&=&\int_0^1 \tau({dv(t)\over{dt}}v(t)^*)dt\\
&=& \int_0^1\tau(\psi(u)^*{dv(t)\over{dt}}v(t)^*\psi(u))dt\\
&=&\int_0^1\tau({dw(t)\over{dt}}w(t)^*)dt\in \overline{\rho_A(K_0(A))}.
\eneq
\end{proof}

\begin{NN}\label{Exel-rot}
Let $A$ be a unital C*-algebra and let $u$ and $v$ be two unitaries with $\|u^*v-1\|<2$. Then $h=\frac{1}{2\pi i}\log(u^*v)$ is a well-defined self-adjoint element of $A$, and $w(t):=u\exp(2\pi i ht)$ is a smooth path of unitaries connecting $u$ and $v$. It is {a} straightforward calculation that for any $\tau\in\tr(A)$, $$\mathrm{Det}(w(t))(\tau)=\frac{1}{2\pi i}\tau(\log(u^*v)).$$
\end{NN}

\begin{NN}\label{Zero-rot}

Let $A$ be a unital C*-algebra, and let $u$ and $w$ be two unitaries. {Suppose  that  $w\in U_0(A).$}  Then $w=\prod_{k=0}^m \exp(2\pi i h_k)$ for some self-adjoint elements $h_0, ..., h_m$. Define the path
\begin{displaymath}
w(t)= (\prod_{k=0}^{l-1}\exp(2\pi i h_k))\exp(2\pi i h_l mt),\quad\textrm{if}\ t\in [(l-1)/m, l/m],
\end{displaymath}
and {define  $u(t)=w^*(t)uw(t)$ for $t\in [0,1].$} Then, $u(t)$ is {continuous and} piecewise smooth, and $u(0)=u$ and $u(1)=w^*uw$. A straightforward calculation shows that $\mathrm{Det}(u(t))=0.$

In general, if $w$ is not in the path-connected component {containing the identity}, one can consider unitaries $\mathrm{diag}(u, 1)$ and $\mathrm{diag}(w, w^*)$. Then, the same argument as above shows that there is a piecewise smooth path $u(t)$ of unitaries in $M_2(A)$ such that $u(0)=\mathrm{diag}(u, 1)$, $u(1)=\mathrm{diag}(w^*uw, 1)$, and $$\mathrm{Det}(u(t))=0.$$

\end{NN}

\begin{lem}[Lemma 3.5 of \cite{Lin-Asy}]\label{K0-image}
{Let $B$ and $C$ be two unital C*-algebras
with $\tr(B)\not=\O.$} Suppose that $\phi, \psi: C \to B$ are two unital monomorphisms such
that $[\phi]=[\psi]$ in $KL(C, B)$ and
$$ \tau\circ \phi = \tau \circ \psi$$ for all $\tau \in \tr(B).$
Suppose that $u \in U_l(C)$ is a unitary and $w \in U_l(B)$ such that
$$
{\|(\phi\otimes {\mathrm{id}}_{M_l})(u)w^*(\psi\otimes{\mathrm{id}}_{M_l})(u^*)w-1\|<2.}$$ Then, for any unitary $U\in U_l(M_{\phi, \psi})$ with $U(0)=(\phi\otimes {\mathrm{id}}_{M_l})(u)$ and $U(1)=(\psi\otimes {\mathrm{id}}_{M_l})(u)$, one has that
\beq\label{K0-1}
\frac{1}{2\pi i}\tau(\log({(\phi\otimes {\mathrm {id}}_{M_l})}(u^*)w^*({\psi\otimes {\mathrm{id}}_{M_l})}(u)w))-{R}_{\phi, \psi}([U])(\tau)\in \rho_B(K_0(B)).
\eneq
\end{lem}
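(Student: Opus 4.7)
The plan is to construct an explicit unitary $U' \in U_{2l}(M_{\phi, \psi})$ with the same endpoints as $\mathrm{diag}(U, 1)$ (namely $\mathrm{diag}(\phi(u), 1)$ at $t=0$ and $\mathrm{diag}(\psi(u), 1)$ at $t=1$) whose rotation value is \emph{exactly} $\tfrac{1}{2\pi i}\tau(\log(\phi(u^*) w^* \psi(u) w))$, and then compare with $\mathrm{diag}(U, 1)$. Since the two unitaries in $U_{2l}(M_{\phi, \psi})$ share endpoints, the pointwise product $U'\cdot \mathrm{diag}(U, 1)^*$ evaluates to the identity at $t=0, 1$, hence defines a loop representing a class $x \in K_1(\mathrm{S}(B)) \cong K_0(B)$ with $[\imath]_1(x) = [U'] - [\mathrm{diag}(U, 1)]$ in $K_1(M_{\phi, \psi})$. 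Lemma~\ref{DrL} then forces the difference of rotations to lie in $\rho_B(K_0(B))$, which is what the lemma asserts.

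The construction of $U'$ proceeds in two legs. The hypothesis $\|\phi(u) w^* \psi(u^*) w - 1\| < 2$ is equivalent to $\|\phi(u^*) w^* \psi(u) w - 1\| < 2$ (they differ by left- and right-multiplication by unitaries), so $h := \frac{1}{2\pi i} \log(\phi(u^*) w^* \psi(u) w) \in M_l(B)_{\mathrm{sa}}$ is well-defined. Set $v(t) := \phi(u) \exp(2\pi i h t)$, a smooth path in $U(M_l(B))$ from $v(0) = \phi(u)$ to $v(1) = \phi(u)\phi(u^*)w^*\psi(u)w = w^* \psi(u) w$. A direct trace-calculation (as in \ref{Exel-rot}) using $v'(t) v(t)^* = (2\pi i)\, v(t) h v(t)^*$ gives
$$
\mathrm{Det}(v)(\tau) = \frac{1}{2\pi i} \int_0^1 \tau(v'(t) v(t)^*)\, dt = \tau(h) = \frac{1}{2\pi i} \tau(\log(\phi(u^*) w^* \psi(u) w)).
$$
For the second leg, pass to $M_{2l}(B)$: since $\mathrm{diag}(w, w^*) \in U_0(M_{2l}(B))$, the construction in \ref{Zero-rot} produces a piecewise smooth path $\gamma$ in $U(M_{2l}(B))$ from $\mathrm{diag}(\psi(u), 1)$ to $\mathrm{diag}(w^* \psi(u) w, 1)$ with $\mathrm{Det}(\gamma) = 0$. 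Concatenate $\mathrm{diag}(v(t), 1)$ with the reverse of $\gamma$ to obtain a piecewise smooth path $U'(t)$ in $U(M_{2l}(B))$ from $\mathrm{diag}(\phi(u), 1)$ to $\mathrm{diag}(\psi(u), 1)$, which we view as a unitary in $U_{2l}(M_{\phi, \psi})$. Additivity of the determinant integral under path-concatenation gives $R_{\phi, \psi}([U'])(\tau) = \tau(h) + 0 = \frac{1}{2\pi i}\tau(\log(\phi(u^*)w^*\psi(u)w))$.

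Stability of the trace integral gives $R_{\phi, \psi}([\mathrm{diag}(U, 1)]) = R_{\phi, \psi}([U])$, while the $K_1$-comparison $[U'] - [\mathrm{diag}(U, 1)] = [\imath]_1(x)$ with $x \in K_0(B)$, combined with Lemma~\ref{DrL}, yields
$$
\tfrac{1}{2\pi i}\tau(\log(\phi(u^*) w^* \psi(u) w)) - R_{\phi, \psi}([U])(\tau) = R_{\phi, \psi}([U'])(\tau) - R_{\phi, \psi}([\mathrm{diag}(U,1)])(\tau) = \rho_B(x)(\tau) \in \rho_B(K_0(B)),
$$
as required.

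The main technical point is the passage to $M_{2l}$, needed both to invoke \ref{Zero-rot} (since $w$ a priori need not lie in $U_0(M_l(B))$) and to frame the loop $U' \cdot \mathrm{diag}(U, 1)^*$ as a genuine element of $K_1(\mathrm{S}(B)) \cong K_0(B)$. Everything else reduces to the direct determinant computation for $v(t)$ and the additivity of $\mathrm{Det}$ under concatenation, both routine.
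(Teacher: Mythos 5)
Your proposal is correct and follows essentially the same route as the paper: construct one explicit path by concatenating an Exel-type leg of determinant $\tau(h)$ with a \ref{Zero-rot} leg of determinant zero, then note (via Lemma~\ref{DrL}) that any other path with the same endpoints differs in rotation number by an element of $\rho_B(K_0(B))$. The only differences are cosmetic: the paper first reduces to $u\in C$ (so $l=1$) and states the "one path suffices" reduction without spelling out the $K_1(\mathrm{S}(B))\cong K_0(B)$ comparison, whereas you keep $l$ general and make the Lemma~\ref{DrL} step explicit.
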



\begin{proof}

Without loss of generality, one may assume that $u\in C$.  Moreover, to prove the {lemma}, it is enough to show that {(\ref{K0-1})} holds for one path of unitaries $U(t)$ in $M_2(B)$ with $U(0)=\mathrm{diag}(\phi(u), 1)$ and $U(1)=\mathrm{diag}(\psi(u), 1)$.

Let $U_1$ be the path of unitaries specified in \ref{Exel-rot} with $U_1(0)=\mathrm{diag}(\phi(u), 1)$ and $U_1(1/2)=\mathrm{diag}(w^*\psi(u)w, 1)$, and let $U_2$ be the path specified in \ref{Zero-rot} with $U_2(1/2)=\mathrm{diag}(w^*\psi(u)w, 1)$ and $U_2(1)=\mathrm{diag}(\psi(u), 1)$.

Set $U$ the path of unitaries by connecting $U_1$ and $U_2$. {Then} $U(0)=\mathrm{diag}(\phi(u), 1)$ and $U(1)=\mathrm{diag}(\psi(u), 1).$
 {By applying \ref{Exel-rot} and \ref{Zero-rot},} for any $\tau\in\tr(B)$, {one computes that}
$$R_{\phi, \psi}([U])=\mathrm{Det}(U(t))(\tau)=\mathrm{Det}(U_1(t))(\tau)+\mathrm{Det}(U_2(t))(\tau)=\frac{1}{2\pi i}\tau(\phi(u^*)w^*\psi(u)w),$$ as desired.
\end{proof}

\section{Homotopy lemma}

In this section, we collect several results from \cite{LN2} on the homotopy lemma.
\begin{defn}
Let $A$ be a unital \CA. In the following, for any invertible element $x\in A$, let $\langle x \rangle$ denote the unitary $x(x^*x)^{-\frac{1}{2}}$, and let $\overline{x}$ denote the element $\overline{\langle x \rangle}$ in $U(A)/CU(A)$. Consider a subgroup $\Int^k\subseteq \Kone(A)$, and write the unitary $\{u_1, ..., u_k\}\subseteq U_c(A)$ the unitary corresponding to the standard generators $\{e_1, e_2,...,e_k\}$ of $\Int^k$. Suppose that $\{u_1, u_2,...,u_k\}\subset M_n(A)$ for some integer $n\ge 1$. Let $\Phi: A\to B$ be a  unital positive linear map and $\Phi\otimes {\mathrm{id}}_{M_n}$ is
at least $\{u_1, ..., u_k\}$-$1/4$-multiplicative (hence each $\Phi\otimes {\mathrm{id}}_{M_n}(u_i)$ is invertible), then the map $\Phi^\ddag|_{s_1(\Int^k)}: \Int^k\to U(B)/CU(B)$ is defined by
$$\Phi^\ddag|_{s_1(\Int^k)}(e_i)= \overline{\langle\Phi\otimes {\mathrm{id}}_{M_n}(u_i)\rangle},\quad 1\leq i\leq k.$$
Thus, for any finitely generated subgroup
$G\subset {\overline{U_c(A)}},$ there exists $\dt>0$ and a finite subset ${\cal G}\subset A$ such that, for any unital $\dt$-${\cal G}$-multiplicative \morp\,
$L: A\to  B$ (for any unital \CA\, $B$), the map $L^{\ddag}$ is well defined
on  $s_1(G).$ (Please see \ref{Texact} for $U_c(A)$ and $s_1.$)
\end{defn}

The following  theorems are taken from \cite{LN2}.

\begin{thm}[3.10 of \cite{LN2}]\label{hp-mat}
Let $C=PM_n(\mathrm{C}(X))P$, where $X$ is a compact subset of a finite CW-complex and $P$ a projection in $M_n(\mathrm{C}(X))$ with an integer $n\geq 1$. Let $\Delta: (0, 1) \to (0, 1)$ be a non-decreasing map. For any $\eps>0$ and any finite subset $\mathcal F \subseteq C$, there exists $\delta>0$, $\eta>0$, $\gamma>0$, a finite subsets $\mathcal G \subseteq C$, $\mathcal P\subseteq \underline{K}(C)$,  a finite
 subset ${\cal Q}=\{x_1, x_2,...,x_k\}\subset K_0(C)$ which generates a free
 subgroup and $x_i=[p_i]-[q_i],$ where $p_i, q_i\in M_m(C)$
 (for some integer $m\ge 1$) are projections,
 satisfying the following:

Suppose that $A$  is a unital simple C*-algebra with $TR(A)\leq 1$, $\phi: C\to A$ is a unital homomorphism and $u\in A$ is a unitary, and suppose that
$$\norm{[\phi(c), u]}<\delta,\ \forall c\in\mathcal G\quad\mathrm{and}\quad \mathrm{Bott}(\phi, u)|_{\mathcal P}=0,$$ and
$$\mu_{\tau\circ \phi}(O_a)\geq\Delta(a)\ \forall \tau\in T(A\otimes D),$$
where $O_a$ is any open ball in $X$ with radius $\eta\leq a<1$ and $\mu_{\tau\circ \phi}$ is the Borel probability measure defined
by $\tau\circ \phi$. Moreover, for each $1\leq i\leq k$,
there is $v_i\in CU(M_m(A))$ such that
$$
\norm{\langle(1_m-\phi(p_i)+\phi(p_i)u)
(1_m-\phi(q_i)+\phi(q_i)u^*\rangle-v_i}<\gamma.
$$
Then there is a continuous path of unitaries $\{u(t): t\in[0, 1]\}$ in $A$ such that
$$u(0)=u, u(1)=1,\ \textrm{and}\ \norm{[\phi(c), u(t)]}<\eps$$
for any $c\in\mathcal F$ and for any $t\in[0, 1]$.
\end{thm}




\begin{thm}[3.14 of \cite{LN2}]\label{BB-exi-mat}
Let $C=PM_n(\mathrm{C}(X))P$, where $X$ is a compact subset of a finite CW-complex and $P$ a projection in $M_n(\mathrm{C}(X))$ for some integer $n\geq 1$. Let
 $G\subset K_0(C)$ be a finitely generated
 subgroup. Write
 ${G}=\Z^k\oplus {\mathrm{Tor}}({G})$ with
 $\Z^k$ generated by
$$
\{x_{1}=[p_1]-[q_1], x_2=[p_2]-[q_2], ..., x_{k}=[p_{k}]-[q_{k}]\},
$$
where $p_i, q_i\in M_m(C)$ (for some integer $m\ge 1$) are projections,
$i=1,...,k$.

Let $A$ be a simple C*-algebra with $TR(A)\leq 1$. Suppose that $\phi: C\to A$ is a monomorphism. Then, for any finite subsets  $\mathcal F\subseteq C$ and $\mathcal P\subseteq \underline{K}(C)$, any $\eps>0$ and $\gamma>0$, any homomorphism $$\Gamma: \Int^k \to U_0(A)/CU(A),$$ there is a unitary $w\in A$ such that
$$\norm{[\phi(f), w]}<\eps\quad\forall f\in\F$$
$$\mathrm{Bott}(\phi, w)|_{\mathcal P}=0,$$ and
$$\mathrm{dist}(
\overline{\langle({1}_m-\phi(p_i)+\phi(p_i)w)({1}_m-\phi(q_i)+\phi(q_i)w^*\rangle},
 \Gamma(x_i))<\gamma, \quad\forall 1\leq i\leq k,$$
where $U_0(A)/CU(A)$ is identified as $U_0(M_m(A))/CU(M_m(A))$, and the distance above is understood as the distance in $U_0(M_m(A))/CU(M_m(A))$.

\end{thm}

%

\begin{thm}[3.16 of \cite{LN2}]\label{B1B2-alg}
Let $C$ be an AH-algebra, and let $A$ be a simple C*-algebra with $TR(A)\leq 1$.  Suppose that $h: C\to A$ is a monomorphism. Then, for any $\eps>0$, any finite subset $\F\subseteq C$ and any finite subset $\mathcal{P}\subseteq\underline{K}(C)$, there is a C*-algebra $C'\cong PM_n(\mathrm{C}(X'))P$ for some finite CW-complex $X'$ with
$K_1(C')=\Z^k\oplus {\mathrm{Tor}}(K_1(C'))$ and a homomorphism $\iota: C'\to C$ with  $\mathcal P\subseteq[\iota](\underline{K}(C'))$, a finite subset $\mathcal{Q}\subseteq \Z^k \subset \Kone(C')$ and $\delta>0$ satisfying the following: Suppose that $\kappa\in\mathrm{Hom}_\Lambda(\underline{K}(C'\otimes {\mathrm{C}}(\T)), \underline{K}(A))$ with
$$\abs{\rho_A\circ\kappa(\boldsymbol{\beta}(x))(\tau)}<\delta,\quad\forall x\in\mathcal{Q},\ \forall \tau\in\mathrm{T}(A).$$
Then there exists a unitary $u\in A$ such that
$$\norm{[h(c), u]}<\eps\quad\forall c\in\F\quad\textrm{and}\quad \mathrm{Bott}(h\circ\iota, u)=\kappa\circ\boldsymbol{\beta}.$$

Moreover, there is a sequence of C*-algebras $C_n$ with the form $C_n=P_nM_{r(n)}(C(X_n))P_n$, where each $X_n$ is a finite CW-complex and $P_n\in M_{r(n)}(C(X_n))$ a projection, such that $C=\varinjlim(C_n, \phi_n)$ for a sequence of unital homomorphisms $\phi_n: C_n\to C_{n+1}$ and one may choose $C'=C_n$ and $\iota=\phi_n$ for some integer $n\geq 1$.
\end{thm}


\section{Approximately unitary equivalence}

First we begin with the following lemma which is a simple combination of the uniqueness theorem \ref{Uniq}
{and}
the proof of Theorem 4.2 in \cite{L-N}. In what follows, if ${\cal G}$ is a subset of a group,
 we will use $G({\cal P})$ for the subgroup generated by ${\cal G}.$


\begin{lem}\label{lem2}
Let $A$ be a {simple} C*-algebra with $\mathrm{TR}(A)\leq 1$, and let $C$ be {a unital} AH-algebra. If there are monomorphisms $\phi, \psi: C\to A$ such that
$$
[\phi]=[\psi]\,\,\,\text{in}\,\,\,KL(C,A),\ 
\phi_{\sharp}=\psi_{\sharp},\andeqn \phi^{\ddag}=\psi^{\ddag},
$$
then, for any $2>\eps>0$, any finite subset $\mathcal F\subseteq C$, any finite subset of unitaries $\mathcal P\subset U_n(C)$ for some $n\ge 1$, there exist {a finite subset
$\mathcal G\subset K_1(C)$}
with {$\overline{\mathcal P} \subseteq \mathcal G$
(where $\overline{\mathcal P}$ is the image of ${\cal P}$ in $K_1(C)$)} and $\delta>0$ such that, for any map $\eta: G(\mathcal G)\to {\mathrm{Aff}({\tr}(A))}$ with $\abs{\eta(x)(\tau)}<\delta$ for {all} $\tau\in\mathrm{T}(A)$ and $\eta(x)-{\overline{R}}_{\phi, \psi}(x)\in\rho_A(\Kzero(A))$ for all $x\in\mathcal G$,  there is a unitary $u\in A$ such that
$$\norm{\phi(f)-u^*\psi(f)u}<\eps\quad\forall f\in\mathcal F,$$
and ${\tau(\frac{1}{2\pi i}\log((\phi\otimes {\mathrm{id}}_{M_n}(x^*))(u\otimes 1_{M_n})^*(\psi\otimes {\mathrm{id}}_{M_n}(x))(u\otimes 1_{M_n})))}=\tau(\eta([x]))$ for all $x\in\mathcal P$ and for all $\tau\in\mathrm{T}(A).$
\end{lem}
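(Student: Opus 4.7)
The strategy is to combine the uniqueness theorem \ref{Uniq} with Property (B2) of $A$ (Remark \ref{B1B2-alg}): first obtain a unitary $u_0$ approximately intertwining $\phi$ and $\psi$, then multiply it by an approximately commuting ``Bott unitary'' $v$ which absorbs the discrepancy between the rotation produced by $u_0$ and the prescribed map $\eta$. Concretely, apply Property (B2) to $\phi:C\to A$ with $\mathcal P_0=\{0\}$, $\mathcal P_1=\overline{\mathcal P}$, an enlarged finite subset $\mathcal F'\supset \mathcal F$, and a small tolerance $\epsilon'<\epsilon/3$, to obtain finitely generated subgroups $G_0\subset \Kzero(C)$, $G_1\subset \Kone(C)$ with $\overline{\mathcal P}\subset G_1$, a finite generating set $\mathcal G\subset G_1$, and a threshold $\Delta_c>0$. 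Take this $\mathcal G$ as the finite subset in the lemma and set $\delta<\Delta_c/3$.

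Next, apply Theorem \ref{Uniq} to $(\phi,\psi)$ with a tolerance $\epsilon_0<\min\{\delta,\epsilon/3\}$ and a finite subset containing $\mathcal F'$ together with all matrix entries of elements of $\mathcal P$, obtaining a unitary $u_0\in A$ with $\|\phi(x)-u_0^*\psi(x)u_0\|<\epsilon_0$ on the chosen subset. Using Lemma \ref{K0-image}, this $u_0$ fixes a specific lift $R_{\phi,\psi}$ of $\overline{R}_{\phi,\psi}$ defined on each $[z]\in \mathcal G$ by
\[
R_{\phi,\psi}([z])(\tau) \;:=\; \frac{1}{2\pi i}\,\tau\bigl(\log\bigl((\phi\otimes \mathrm{id})(z^*)(u_0\otimes 1)^*(\psi\otimes \mathrm{id})(z)(u_0\otimes 1)\bigr)\bigr),
\]
extended linearly to $G_1$. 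The hypothesis $\eta(z)-\overline{R}_{\phi,\psi}(z)\in \rho_A(\Kzero(A))$ then yields a homomorphism $d:G_1\to \Kzero(A)$ with $\rho_A\circ d = R_{\phi,\psi}-\eta$ on $\mathcal G$; since $|R_{\phi,\psi}(z)(\tau)|<\delta$ (achievable by shrinking $\epsilon_0$) and $|\eta(z)(\tau)|<\delta$, we obtain $|\tau(d(z))|<2\delta<\Delta_c$. Property (B2), applied with $b_0=0$ and $b_1=d$, produces a unitary $v\in A$ such that $\|[\phi(x),v]\|<\epsilon'$ for $x\in\mathcal F'$ and $\mathrm{bott}_1(\phi,v)|_{\mathcal G}=d$. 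Set $u:=u_0 v$.

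The norm estimate is immediate: $u^*\psi(x)u = v^*(u_0^*\psi(x)u_0)v$ is $\epsilon_0$-close to $v^*\phi(x)v$, which in turn is within $2\epsilon'$ of $\phi(x)$ by almost-commutation, so $\|\phi(x)-u^*\psi(x)u\|<\epsilon$. For the trace identity, factor
\[
\phi(z^*)u^*\psi(z)u \;=\; \bigl[\phi(z^*)v^*\phi(z)v\bigr]\cdot\bigl[v^*\bigl(\phi(z^*)u_0^*\psi(z)u_0\bigr)v\bigr];
\]
both factors lie within $2$ of $1$, so their logarithms are defined and the trace of the log of the product splits additively up to negligible error. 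The second factor contributes $R_{\phi,\psi}([z])(\tau)$ by $v$-conjugation invariance of $\tau$ and the definition of $R_{\phi,\psi}$. Exel's formula (Theorem \ref{Exel}), combined with the identity $\tau(\log(ab))=\tau(\log(ba))$ applied to $a=\phi(z^*)v^*$ and $b=\phi(z)v$, identifies the first factor's contribution as $-\tau(\mathrm{bott}_1(\phi(z),v))=-\tau(d([z]))=\eta([z])(\tau)-R_{\phi,\psi}([z])(\tau)$; summing gives $\eta([z])(\tau)$, as required.

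The principal obstacle is coordinating the tolerances: $\epsilon_0$ must be small enough that the representative $R_{\phi,\psi}$ is itself bounded by $\delta$; the Property (B2) threshold $\Delta_c$ must dominate $|\tau(d)|$; and the log-of-product decomposition must have error negligible in trace. The sign conventions in Exel's formula and Lemma \ref{K0-image} also require careful tracking so that the Bott correction cancels $R_{\phi,\psi}-\eta$ in the correct direction, producing equality on the nose rather than merely modulo $\rho_A(\Kzero(A))$.
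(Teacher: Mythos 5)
Your proof is correct and follows essentially the same route as the paper's: combine the uniqueness theorem \ref{Uniq} with Property (B2) and Exel's formula, lift the discrepancy $R_{\phi,\psi}-\eta$ to $K_0(A)$ (using torsion-freeness of $\rho_A(K_0(A))$), and absorb it by a Bott unitary. The only cosmetic differences are that you apply Property (B2) to $\phi$ and multiply the Bott unitary on the inside ($u=u_0v$), whereas the paper applies it to $\psi$ and multiplies on the outside ($u=wv$); the trace-of-log additivity you flag informally at the end is exactly the preparatory choice of $\eta>0$ and $\mathcal F_0$ that opens the paper's proof.
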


\begin{proof}
Without loss of generality, one may assume that any element in $\mathcal F$ has norm at most one.
Let $\ep>0.$  Choose $\ep>\theta>0$ and a finite subset ${\cal F}\subset {\cal F}_0\subset C$ satisfying the following:
For all $x\in {\cal P},$
$
\tau(\frac{1}{2\pi i}\log({\phi(x^*)}w_j^*\psi(x)w_j))
$
is well defined and
\begin{eqnarray}\label{L1-01}
&&\tau(\frac{1}{2\pi i}\log(\phi(x^*)w_j^*\psi(x)w_j))\\
&=&\tau({1\over{2\pi i}}\log(\phi(x^*)v_1^*\psi(x)v_1)
+\cdots +\tau({1\over{2\pi i}}\log(\phi(x^*)v_j^*\psi(x)v_j))\tforal \tau\in  T(A),
\end{eqnarray}
whenever
$$
\|\phi(f)-v_j^*\psi(f)v_j\|<\theta\tforal f\in {\cal F}_0,
$$
where $v_j$ are unitaries in $A$ and $w_j=v_1\cdots v_j,$ $j=1,2,3.$
In the above, if $x\in U_n(C),$ we denote by $\phi$ and $\psi$ the extended maps  $\phi\otimes{\mathrm{id}}_{M_n}$
and $\psi\otimes {\mathrm{id}}_{M_n},$ and replace $w_j,$ and $v_j$ by
${\mathrm{diag}}(w_j,...,w_j)$ and ${\mathrm{diag}}(v_j,...,v_j),$ respectively.


Let $C'$, $\iota: C'\to C$, $\delta'>0$ (in the place of $\delta$) and $\mathcal G'\subseteq \Kone(C')$ (in the place of $\mathcal Q$) the constant and finite subset with respect to $C$ (in the place of $C$), $\mathcal F_0$ (in the place of $\F$), $\mathcal P$ (in the place of $\mathcal P$), and $\psi$ (in the place of $h$) required by \ref{B1B2-alg}. Put $\delta=\delta'/2$.

{Fix a decomposition $(\iota)_{*1}(C')=\Int^k\oplus \mathrm{Tor}((\iota)_{*1}(C'))$ (for some integer $k\geq 0$), and let $\mathcal G$ be a set of standard generators of $\Int^k$. Let $\mathcal G''\subset U_{m}(C)$ be a finite subset containing a representative for each element of $\mathcal G$. Without loss of generality, one may assume that $\mathcal P\subseteq \mathcal G''$.}
By {Theorem 5.10 of \cite{Lin-AU11}}, the maps $\phi$ and $\psi$ are approximately unitary equivalent. Hence, for any finite subset $\mathcal Q$ and any $\delta_1$, there is a unitary $v\in A$ such that $$\norm{\phi(f)-v^*\psi(f) v}<\delta_1,\quad\forall f\in\mathcal Q.$$

By choosing $\mathcal Q\supseteq \cal F_0$ sufficiently large and $\delta_1<\eta/2$ sufficiently small, the map $$[x]\mapsto\tau(\frac{1}{2\pi i}\log(\phi^*(x)v^*\psi(x)v)),\ x\in \mathcal G'',$$ induces a homomorphism
{$\eta_1: (\iota)_{*1}(\Kone(C'))\to\mathrm{Aff}(\mathrm{T}(A))$ (note that $\eta_1(\mathrm{Tor}((\iota)_{*1}(\Kone(C'))))=\{0\}$)}, and moreover, $\|\eta_1(x)\|<\delta$ for all $x\in \mathcal G$.


By Lemma \ref{K0-image}, {the image} of $\eta_1-\overline{R}_{\phi, \psi}$ is in $\rho(\Kzero(A))$.
Since $\eta(x)-{\overline{R}}_{\phi, \psi}(x)\in\rho_A(\Kzero(A))$
for all $x\in {\cal G},$  the image
{$(\eta-\eta_1)((\iota)_{*1}(\Kone(C')))$} is also in $\rho_A(\Kzero(A))$.
Since $\eta-\eta_1$ factors through $\Int^k$, there is a map {$h: (\iota)_{*1}(\Kone(C'))\to\Kzero(A)$} such that $\eta-\eta_1=\rho_A\circ h.$ Note that $|\tau(h(x))|<2\delta=\delta'$ for all
$\tau\in\mathrm{T}(A)$ and $x\in\mathcal G$.

{By the universal multi-coefficient theorem, there is $\kappa\in\mathrm{Hom}_{\Lambda}(\underline{K}(C'\otimes\mathrm{C}(\mathbb T)), \underline{K}(A))$ with
$$\kappa\circ\boldsymbol{\beta}|_{\Kone(C')}=h\circ(\iota)_{*1}.$$}

Applying \ref{B1B2-alg}, there is a unitary $w$ such that
$$\norm{[w, \psi(f)]}<\theta/2,\quad\forall f\in\mathcal F_0,$$ and
{$\mathrm{Bott}(w, \psi\circ\iota)=\kappa$. In particular, $\mathrm{bott}_1(w, \psi)(x)=h(x)$ for all
$x\in\mathcal P$.
}

Set $u=wv$. One then has $$\norm{\phi(f)-u^*\psi(f)u}<\theta,\quad\forall f\in{\mathcal F_0},$$ and for any $x\in\mathcal{P}$ and any $\tau\in\mathrm{T}(A)$,
\begin{eqnarray*}
\tau(\frac{1}{2\pi i}\log({\phi(x^*)}u^*\psi(x)u))&=&\tau(\frac{1}{2\pi i}\log(\phi(x)v^*w^*\psi(z)wv))\\
&=&\tau(\frac{1}{2\pi i}\log({\phi(x^*)}v^*\psi(x)v  v^*{\psi(x^*)}   w^*\psi(x)wv)\\
&=&\tau(\frac{1}{2\pi i}\log({\phi(x^*)}v^*\psi(x)v)+\tau(\frac{1}{2\pi i}\log({\psi(x^*)}   w^*\psi(x)w)\\
&=&\eta_1({[x]})(\tau)+h({[x]})(\tau)
=\eta([x])(\tau).
\end{eqnarray*}
\end{proof}

\begin{rem}\label{Rlem2}
In the case that $\mathrm{TR}(A)=0,$
{in fact} one can apply
Theorem 3.6 of \cite{Lnindiana} as the uniqueness theorem in which {case}
the condition $\phi^{\ddag}=\psi^{\ddag}$ is not needed, and moreover, one can apply Corollary 17.9 of \cite{LnHomtp} (homotopy lemma).  This special case of lemma is also observed by
H. Matui in \cite{M}.
\end{rem}

\begin{thm}\label{AHtoC}
Let $A$ be a {simple} C*-algebra with ${\mathrm{TR}}(A\otimes Q)\leq 1$, and let $C$ be  {a unital}  AH-algebra. Suppose {that} there are two unital monomorphisms $\phi, \psi: C\to A$ with
$$
[\phi]=[\psi]\,\,\,\text{in}\,\,\, KL(C,A), \,\,\,\phi_{\sharp}=\psi_{\sharp} \andeqn \phi^{\ddag}=\psi^{\ddag}.
$$
Then, for any finite subset $\mathcal F\subseteq C$, there exists
{a unitray} $u\in A\otimes \mathcal Z$ such that $$\norm{\phi(x)\otimes 1-u^*(\psi(x)\otimes 1)u}<\eps,\quad\forall x\in\mathcal{F}.$$
\end{thm}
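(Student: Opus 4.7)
The plan is to construct the unitary inside the subalgebra $A\otimes \mathcal Z_{\mathfrak p,\mathfrak q}$, exploiting the unital embedding $\iota:\mathcal Z_{\mathfrak p,\mathfrak q}\hookrightarrow\mathcal Z$ arising from the stationary inductive limit description of \ref{zpq}. Fix relatively prime supernatural numbers $\mathfrak p,\mathfrak q$ of infinite type with $M_{\mathfrak p}\otimes M_{\mathfrak q}\cong Q$. Since $\mathrm{TR}(A\otimes Q)\le 1$ places $A$ in the class $\mathcal C$ of \ref{Classes}, both $A\otimes M_{\mathfrak p}$ and $A\otimes M_{\mathfrak q}$ have tracial rank at most one. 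By the very definition in \ref{zpq}, a unitary in $A\otimes\mathcal Z_{\mathfrak p,\mathfrak q}$ is a continuous path $u(t)\in A\otimes M_{\mathfrak p}\otimes M_{\mathfrak q}$ with $u(0)\in A\otimes M_{\mathfrak p}\otimes 1$ and $u(1)\in A\otimes 1\otimes M_{\mathfrak q}$, so it is enough to produce such a path approximately intertwining $\phi\otimes 1$ and $\psi\otimes 1$ on $\mathcal F$ within $\eps$.

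Next, apply Lemma \ref{lem2} to the pair $\phi\otimes\mathrm{id},\psi\otimes\mathrm{id}:C\to A\otimes M_{\mathfrak p}$, which inherits the three compatibility conditions, to obtain a unitary $v_{\mathfrak p}\in A\otimes M_{\mathfrak p}$ that approximately intertwines $\phi\otimes 1$ and $\psi\otimes 1$ on $\mathcal F$ and has a prescribed rotation $\eta_{\mathfrak p}:\langle\mathcal G\rangle\to\mathrm{Aff}(\mathrm{T}(A))$ that may be chosen to be uniformly small and to satisfy $\eta_{\mathfrak p}-\overline R_{\phi,\psi}\in\rho_{A\otimes M_{\mathfrak p}}(K_0(A\otimes M_{\mathfrak p}))=\rho_A(K_0(A))\otimes\mathbb Q_{\mathfrak p}$. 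Apply the same reasoning with $\mathfrak q$ in place of $\mathfrak p$ to obtain $v_{\mathfrak q}\in A\otimes M_{\mathfrak q}$ with rotation $\eta_{\mathfrak q}$ differing from $\overline R_{\phi,\psi}$ by an element of $\rho_A(K_0(A))\otimes\mathbb Q_{\mathfrak q}$. Inside $A\otimes Q$, the unitaries $V_0:=v_{\mathfrak p}\otimes 1$ and $V_1:=1\otimes v_{\mathfrak q}$ then both approximately intertwine $\phi\otimes 1$ and $\psi\otimes 1$ on $\mathcal F$.

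The central step is to join $V_0$ and $V_1$ by a continuous path of unitaries in $A\otimes Q$ which approximately commutes with $\psi(x)\otimes 1$ for $x\in\mathcal F$, equivalently to trivialize the Bott-type obstruction of the pair $(V_0,V_1)$ relative to $\psi\otimes 1$. By Theorem \ref{Exel} and Lemma \ref{K0-image}, this obstruction is controlled by the difference of the prescribed rotations $\eta_{\mathfrak p}$ and $\eta_{\mathfrak q}$, regarded in $\mathrm{Aff}(\mathrm{T}(A\otimes Q))=\mathrm{Aff}(\mathrm{T}(A))$ modulo $\rho_{A\otimes Q}(K_0(A\otimes Q))=\rho_A(K_0(A))\otimes\mathbb Q$. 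The decomposition $\mathbb Q=\mathbb Q_{\mathfrak p}+\mathbb Q_{\mathfrak q}$ of \ref{ratn-pq} lets us arrange that $\eta_{\mathfrak p}+\eta_{\mathfrak q}$ matches the total rotation modulo $\rho_A(K_0(A))\otimes\mathbb Q$, killing the obstruction; property (B2) for $A\otimes Q$ from Remark \ref{B1B2-alg} then yields the required path, which is by construction an element of $A\otimes\mathcal Z_{\mathfrak p,\mathfrak q}$, and pushing through $\iota$ gives the desired unitary in $A\otimes\mathcal Z$.

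The main obstacle is the rotation bookkeeping: one must verify that the $\mathbb Q_{\mathfrak p}$- and $\mathbb Q_{\mathfrak q}$-splitting can be realized by $\eta_{\mathfrak p},\eta_{\mathfrak q}$ that are simultaneously small enough to meet the smallness hypothesis of Lemma \ref{lem2} and whose combined effect is killed in $\rho_{A\otimes Q}(K_0(A\otimes Q))$ rather than in a merely closure-level quotient. This is handled by first invoking Lemma \ref{lem-dense} to place $\overline R_{\phi,\psi}$ inside $\overline{\rho_A(K_0(A))}$, so that small lifts to $\rho_A(K_0(A))\otimes\mathbb Q$ exist, and then splitting each such lift into bounded $\mathbb Q_{\mathfrak p}$- and $\mathbb Q_{\mathfrak q}$-components using $\mathbb Q=\mathbb Q_{\mathfrak p}+\mathbb Q_{\mathfrak q}$; a standard argument allows the sum of the two pieces to be taken as close to the target as desired.
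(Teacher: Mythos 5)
Your high-level decomposition is exactly the paper's: obtain approximate intertwiners $u_\p\in A\otimes M_\p$ and $u_\q\in A\otimes M_\q$ via Lemma \ref{lem2} with controlled rotation data, then join them by a path in $A\otimes Q$ that almost commutes with $\psi(\mathcal F)\otimes 1$, so the resulting function lives in $A\otimes\mathcal Z_{\p,\q}\subset A\otimes\mathcal Z$. But your final step has two genuine gaps. First, Property (B2) does \emph{not} produce a path of unitaries; it produces a \emph{single} unitary with prescribed Bott data. The tool that converts $\mathrm{Bott}(\psi\otimes 1,\,V)|_{\mathcal P}=0$ into a continuous path from $V$ to $1$ approximately commuting with $\psi(\mathcal F)\otimes 1$ is the basic homotopy lemma, Theorem 8.4 of \cite{Lin-hmtp}, which you never invoke; without it the argument cannot close.

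Second, your treatment of the obstruction conflates ``the trace of $\mathrm{bott}_1$ is controlled'' with ``the Bott class vanishes.'' Exel's formula (Theorem \ref{Exel}) and Lemma \ref{K0-image} only control the scalar $\tau(\mathrm{bott}_1(\psi\otimes 1,\,u_\p u_\q^*))$. The paper forces this scalar to be \emph{exactly} zero by choosing a \emph{single} $\eta$ (via Lemma \ref{lem-dense}) and pushing it forward by $(\imath_\p)_\sharp$ and $(\imath_\q)_\sharp$; with independent $\eta_\p,\eta_\q$ satisfying only $\eta_\fr-\overline R_{\phi,\psi}\in\rho_A(K_0(A))\otimes\Q_\fr$, the difference $\eta_\p-\eta_\q$ need not vanish and the cancellation you need does not occur automatically. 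More importantly, even with the trace equal to zero, $\mathrm{bott}_1(\psi\otimes 1,\,u_\p u_\q^*)$ is a possibly nonzero element of $\ker\rho_{A\otimes Q}$. The paper then uses $\ker\rho_{A\otimes Q}=\ker\rho_A\otimes\Q$, $\Q=\Q_\p+\Q_\q$, and likewise for $\mathrm{bott}_0$ into $K_1(A\otimes Q)=K_1(A)\otimes\Q$, to split these classes into $\p$- and $\q$-parts, and then applies Property (B2) \emph{inside $A\otimes M_\p$ and $A\otimes M_\q$} (not $A\otimes Q$) to build corrector unitaries $w_\p,w_\q$. Only after replacing $u_\fr$ by $v_\fr=w_\fr u_\fr$ does $\mathrm{Bott}(\psi\otimes 1,\,v_\p v_\q^*)$ genuinely vanish on $\mathcal P$ (together with $K_i(A\otimes Q,\Z/m\Z)=\{0\}$ handling torsion coefficients), which is the hypothesis the homotopy lemma requires. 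Your ``standard argument'' remark elides this entire splitting-and-correction step.
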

\begin{proof}
We first note, by \cite{L-N-Range},
that $TR(A\otimes M_\fr)\le 1$ for any supernatural number.

Write
$C=\lim_{n\to\infty} (C_n,\phi_n),$ where each $C_n$ has the form
$P_nM_{m(n)}(C(X_n))P_n,$ where $X_n$ is a finite CW-complex and
$P_n\in M_{m(n)}(C(X_n))$ is a projection.
Let ${\cal F}\subseteq C$ be a finite subset, and let $\ep>0.$
Without loss of generality, we may assume that
${\cal F}\subseteq \phi_{n, \infty}(C_n)$ for some integer $n\ge 1.$
We may write $\phi_{n, \infty}(C_n)=PM_m(C(X))P,$ where $X$ is a
compact subset of a finite CW-complex.
 Then, to simplify notation, without loss of generality, in the rest
 of the proof, we may assume that $C=PM_m(C(X))P,$ where
 $X$ is a compact subset of a finite CW complex and $P\in M_m(C(X))$ is a projection.

Fix a metric on $X$. For any $a\in(0, 1)$, denote by
$$\Delta(a)=\inf\{\mu_{\tau\circ\psi}(O_a);\ \tau\in T(A), \textrm{$O_a$ an open ball of radius $a$ in $X$}\}.$$ Since $A$ is simple, one has that $0<\Delta(a)\leq 1$ and $\Delta(a)\to 0$ as $a\to 0$.

Assume that every element in $\mathcal F$ has norm at most one. Let $\p$ and $\q$ be a pair of relatively prime supernatural numbers of infinite type with $\mathbb Q_\p+\mathbb Q_\q=\mathbb Q$. Denote by $M_\p$ and $M_\q$ the UHF-algebras associated to $\p$ and $\q$ respectively.

 Let $\delta>0$, $\gamma>0$, $d>0$ (in place of $\eta$), $\mathcal G\subseteq C$ a finite subset,  $\mathcal P\subseteq \underline{K}(C)$ a finite subset and $\mathcal Q=\{x_1, ..., x_k\} \subseteq K_0(C)$
which generates a free subgroup required by Theorem \ref{hp-mat} corresponding to
$\mathcal F$, $\eps/2$ (in place of $\eps$) and $\Delta$. We may assume that $x_i=[p_i]-[q_i],$ where
 $p_i, q_i\in M_n(C)$ are projections and $i=1,2,...,k.$

In the rest of of the proof, for a \hm\, $h: C'\to B'$
(for any C*-algebras $C'$ and $B'$), we will use $h$ instead of $h\otimes \mathrm{id}_{M_n}:
M_n(C')\to M_n(B')$ when it is inconvenient.

Without loss of generality, we may assume that $\delta<\eps/2$ is small enough and $\mathcal G$ is large enough so that for any homomorphism $h: C\to A$, the maps $\mathrm{Bott}(h, u_j)$  and
$\mathrm{Bott}(h, w_j)$ are well defined and
$$
\mathrm{Bott}(h, w_j)=\mathrm{Bott}(h, u_1)+\cdots + \mathrm{Bott}(h, u_j)
$$
on the subgroup generated by $\mathcal P,$ if $u_j$ is any unitaries with $\norm{[h(x), u_j]}<\delta$ for all $x\in\mathcal G$, where $w_j=u_1\cdots u_j,$ $j=1,2,3,4.$

We may also assume that
\beq\label{AHtoC-add1}
\|h(p_i),\, u_j]\|<1/16\andeqn \|h(q_i),\, u_j]\|<1/16,\,\,\,1\le i\le k, j=1,2,3,4
\eneq
(by choosing larger ${\cal G}$ and smaller $\dt$)

Let $\imath_\fr: A\to A\otimes M_\fr$ be the embedding defined by
$\imath_\fr(a)=a\otimes 1$ for all $a\in A,$ where $\fr$ is a supernatural number.
Define $\phi_\fr=\imath_\fr\circ \phi$ and $\psi_\fr=\imath_\fr\circ \psi.$

For any supernatural number $\fr=\p, \q$, the C*-algebra
$A\otimes M_\fr$ has tracial rank at most one.
Denote by $C'=P' M_n(\mathrm{C}(X'))P'$,
$\imath: C'\to C$, $\delta_\fr$ (in place of $\dt$) and $\mathcal Q_\fr\subseteq \Kone(C')$
(in place of $\mathcal Q$) which generates a free subgroup
corresponding to $\delta/8$ (in place of $\ep$), $\mathcal G$, $\mathcal P$ and
$\psi_\fr$ required by Theorem \ref{B1B2-alg}.
Let $0<\delta_2<\min\{\delta_\p, \delta_\q, \eps, \gamma\}$, and
let $\mathcal H \subseteq \underline{K}(C')$ be a finite  set of generators.
Denoted by $\mathcal H_1=\mathcal H\cap\Kone(C')$, we may assume that ${\cal Q}_{\fr}\subset {\cal H}_1.$  Pick  a finite subset
{$\mathcal U\subset U_n(C)$ for some integer $n\ge 1$} such that any element in $\imath_{*1}(\mathcal H_1)$ has a representative in $\mathcal U$.
Let $S\subset C$ be a finite subset such that, if $u=(a_{ij})\in {\cal U},$
then $a_{i,j}\in S.$

{Furthermore, one may assume that $\delta_2$ is sufficiently small such that for any unitaries $z_1, z_2$ in a C*-algebra with tracial states, $\tau(\frac{1}{2\pi i}\log(z_iz_j^*))$ ($i, j=1,2,3$) is well defined and
\vspace{-0.1in}
$$
\tau(\frac{1}{2\pi i}\log(z_1z_2^*))=\tau(\frac{1}{2\pi i}\log(z_1z_3^*))+ \tau(\frac{1}{2\pi i}\log(z_3z_2^*))
$$
for any tracial state $\tau$, whenever $\|z_1-z_3\|<\delta_2$ and $\|z_2-z_3\|<\delta_2$.}

Let $\mathcal {Q}_1\subset\Kone(C)$ (in place of $\mathcal G$)
and $\delta_3$ (in place of $\delta$) be the finite subset and constant of Lemma \ref{lem2} with respect to {$\mathcal G\cup S$} (in place of ${\cal F}$), $\mathcal U$ (in place of ${\cal P}$) and $\delta_2/n^2$ (in place of $\ep$).

By Lemma \ref{lem-dense}, the image of $R_{\phi, \psi}$ is in the closure of $\rho_A(\Kzero(A))$. Note that kernel of $R_{\phi, \psi}$ contains $\mathrm{Tor}(G(\mathcal {Q}_1))$ and $G(\mathcal {Q}_1)$ is finitely generated. There exists a {\hm\,} $\eta: \mathcal {Q}_1 \to\mathrm{Aff}(\mathrm{T}(A))$ such that $\eta(x)-{\overline{R}}_{\phi, \psi}(x)\in \rho_A(\Kzero(A))$ and $\|\eta(x)\|<\delta_3$ for all $x\in \mathcal {Q}_1$.
Then the image of $(\imath_\p)_{\sharp}\circ \eta-{\overline{R}}_{\phi_{p}, \psi_\p}$
is in $\rho_{A\otimes M_\p}(\Kzero(A\otimes M_\p)))$. The same holds for $\q$.
 By Lemma \ref{lem2} there {exist} unitaries $u_\p$ and $u_\q$ such that
 $$\norm{\phi_\p(g)-u_\p^*\psi_\p(g)u_\p}<{\delta_2/n^2}
{\andeqn}
\norm{\phi_\q(g)-u_\q^*\psi_\q(g)u_\q}<\delta_2/n^2,
\quad\forall g\in\mathcal G \cup S.
$$
\vspace{-0.2in}
Moreover,
\beq\nonumber
\tau(\frac{1}{2\pi i}\log({\phi_\p(x^*)}u^*_\p\psi_\p(x)u_\p))
=(\imath_{\p})_{\sharp}\circ \eta({[x]})(\tau)\tforal \tau\in \mathrm{T}(A_\p)
\andeqn \\
\tau(\frac{1}{2\pi i}\log({\phi_\q(x^*)}u^*_\q\psi_\q(x)u_\q))
=(\imath_{\q})_{\sharp}\circ\eta({[x]})(\tau)\tforal \tau\in\mathrm{T}(A_\q)
\eneq
 and for all $x\in{\mathcal U},$
where we identify $\phi$ and $\psi$ with $\phi\otimes {\mathrm{id}}_{M_n}$ and $\phi\otimes {\mathrm{id}}_{M_n},$ and
$u$ with $u\otimes 1_{M_n},$ respectively.

Let $\infty$ be the supernatural number associated with $\Q.$
Let $e_\p: A\otimes M_\p\to A\otimes Q$ and $e_\q: A\otimes M_\q\to A\otimes Q$ be the standard embeddings.
Then, one computes that,  for all $x\in {{\cal U}}$, by the Exel formula (see {\ref{Exel}} ),
\begin{eqnarray}
\tau(\mathrm{bott}_1(\psi(x)\otimes 1, u_\p u^*_\q))&=&
 \tau(\frac{1}{2\pi i}\log(u_\p u_\q^*(\psi(x)\otimes1)u_\q u_\p^*({\psi(x^*)}\otimes1)))\\
&=&{\tau(\frac{1}{2\pi i}\log(u_\q^*(\psi(x)\otimes1)u_\q u_\p^*(\psi(x^*)\otimes 1)u_\p)}\\
&=&{\tau(\frac{1}{2\pi i}\log(u_\q^*(\psi(x)\otimes1)u_\q(\phi(x^*)\otimes 1))}\\
&&{+\tau({1\over{2\pi i}}\log((\phi(x^*)\otimes 1)u_\p^*(\psi(x)\otimes 1)
u_\p)}\\
&=&{-}(e_{{\q}})_{\sharp}\circ (\imath_{{\q}})_{\sharp}\circ \eta([x])(\tau){+}(e_\p)_{\sharp}\circ (\imath_\p)_{\sharp}\circ\eta([x])(\tau)\\
&=&{-}(\imath_{\infty})_{\sharp}\circ \eta([x])(\tau){+}(\imath_{\infty})_{\sharp}\circ \eta([x])(\tau)=0
\end{eqnarray}
for all $\tau\in T(A\otimes Q),$
{where we identify $\phi$ and $\psi$ with
$\phi\otimes {\mathrm{id}}_{M_n}$ and
$\psi\otimes {\mathrm{id}}_{M_n},$ and
$u_\p$ and $u_\q$ with $u_\p\otimes 1_{M_n}$ and $u_\q$ with
$u_\q\otimes 1_{M_n},$ respectively.}
Therefore, the image of the map $\mathrm{bott}_1(\psi\otimes 1, u_\p u^*_\q)$ is in
$\mathrm{\mathrm{ker}\rho}_{A\otimes Q}$. Note that $\Kzero(A\otimes Q)\cong\Kzero(A)\otimes \mathbb Q$ is torsion free. Hence the map $\mathrm{bott}_1(\psi\otimes 1, u_\p u^*_\q)$ factors through the torsion-free part of $G(\imath_{*1}(\mathcal H_1))$. Since $\mathcal H_1$ is a set of generators of $\Kone(C')$, one may assume that the domain of the map $\mathrm{bott}_1(\psi\otimes 1, u_\p u^*_\q)$ is $\imath_{*1}(\Kone(C'))$.
Note that there is a short exact sequence
$$
\xymatrix{
0\ar[r] & \ker \rho_A \ar[r] & \Kzero(A) \ar[r]^-{\rho_A} & \rho_A(\Kzero(A))\ar[r] & 0.
}
$$
Since $D:=\Ratn$, $\Ratn_\p$ or $\Ratn_\q$ is flat, one has
$$
\xymatrix{
0\ar[r] & \ker \rho_A\otimes D \ar[r] & \Kzero(A) \ar[r]^-{\rho_A\otimes\mathrm{id}_{D}}\otimes D & \rho_A(\Kzero(A))\otimes D\ar[r] \ar[r] & 0.
}
$$
Since the UHF-algebra $R:=Q$, $M_\p$ or $M_\q$ have unique trace, the map $\rho_A\otimes\mathrm{id}_D$ is the same as the map $\rho_{A\otimes R}$ if $\Kzero(A\otimes R)$ is identified as $\Kzero(A)\otimes D$ respectively.

Hence $\ker \rho_{A\otimes Q}=\ker\rho_A\otimes\Ratn$ and $\ker\rho_{A\otimes M_\fr}=(\ker\rho)\otimes \Q_\fr,$ $\fr=\p,$ or $\fr=\q.$
Moreover, since $\p$ and $\q$ are relative prime,  any rational number $r$ can be written as
$r=r_\p+r_\q$ with $r_\p\in\mathbb Q_\p$ and $r_\q\in\mathbb Q_\q$
(see \ref{ratn-pq}).
Since $\ker\rho_{A\otimes Q}$ is torsion free, $\mathrm{bott}_1((\psi\otimes 1\circ \imath)\otimes 1, u_\p u^*_\q)$
maps $\mathrm{Tor}(K_1(C'))$ to zero. Write $K_1(C')=\Z^r\oplus \mathrm{Tor}(K_1(C'))$ and let $\{e_1, e_2,...,e_r\}$ be a set of generators of $\Z^r.$
Suppose that $\mathrm{bott}_1((\psi\otimes 1\circ \imath)\otimes 1, u_\p u^*_\q)$ maps $e_i$ to $\sum_{j=1}^{m_i}x_{i,j}\otimes r_{i,j},$ where
$x_{i,j}\in \ker\rho_A$ and $r_{i,j}\in \Q,$ $j=1,2,...,m_i$ and  $i=1,2,...,r.$
There are $r_{i,j,\p}\in \Q_\p$ and $r_{i,j,\q}\in \Q_\q$ such that
$r_{i,j}=r_{i,j,\p}-r_{i,j, \q},$
$j=1,2,...,m_i$ and $i=1,2,...,r.$
Define two \hm s $\theta_\p: K_1(C')\to \ker\rho_{A\otimes M_\p}$ and $\theta_\q: K_1(C')\to \ker\rho_{A\otimes M_\q}$ as follows:
$(\theta_\fr)|_{\mathrm{Tor}(K_1(C'))}=0,$ $\fr=\p,\, \q.$
Define $\theta_\fr(e_i)=\sum_{j=1}^{m_i}x_{i,j}\otimes r_{i,j, \fr}$ by regarding $\sum_{j=1}^{m_i}x_{i,j}\otimes r_{i,j, \fr}$
as an element of $K_0(A\otimes M_\fr)$),
$\fr=\p,\,\q$ and $i=1,2,...,r.$ Then
$$\mathrm{bott}_1((\psi\otimes 1\circ \imath)\otimes 1, u_\p u^*_\q)=
{(j_\p)_{*0}\circ \theta_\p-(j_\q)_{*0}\circ \theta_\q},$$ where
$j_\fr: A\otimes M_\fr\to A\otimes Q$ is the embedding.
The same argument shows there are \hm s
 $\alpha_\p:
\Kzero(C')\to\Kone(A\otimes M_\p)$ and $\alpha_\q:
\Kzero(C')\to\Kone(A\otimes M_\q)$ such that $$\mathrm{bott}_0((\psi\circ \imath)\otimes 1, u_\p u^*_\q)=
(j_\p)_{*1}\circ \alpha_\p-(j_\q)_{*1}\circ \alpha_\q.$$

By the universal multi-coefficient theorem,  there is $\kappa_\p\in\mathrm{Hom}_{\Lambda}(\underline{K}(C'\otimes{\mathrm{C}}(\T)), \underline{K}(A\otimes M_\p))$ such that
\beq\label{AHtoC-n3}
\kappa_\p|_{\boldsymbol{\bt}(K_0(C'))}=-\af_\p\circ {\boldsymbol{\bt}}^{-1} \andeqn \kappa_\p|_{\boldsymbol{\bt}(K_1(C'))}=-\theta_\p\circ {\boldsymbol{\bt}}^{-1}.
\eneq
Similarly, there is $\kappa_\q\in \mathrm{Hom}_{\Lambda}(\underline{K}(C'\otimes{\mathrm{C}}(C(\T))), \underline{K}(A\otimes M_\q))$ such that
\beq\label{AHtoC-n4}
\kappa_\q|_{{\boldsymbol{\bt}}(K_0(C'))}=-\af_\q\circ {\boldsymbol{\bt}}^{-1}\andeqn
\kappa_\q|_{{\boldsymbol{\bt}}(K_1(C'))}=-\theta_\q\circ {\boldsymbol{\bt}}^{-1}.
\eneq

To apply \ref{B1B2-alg}, we verify that
\beq
|\rho_{A\otimes M_\p}\circ\kappa_\p({\boldsymbol{\bt}}(x))|=0<\dt_\p\tforal x\in {\cal Q}_\p\andeqn\\
|\rho_{A\otimes M_\q}\circ\kappa_\p({\boldsymbol{\bt}}(x))|=0<\dt_\q\tforal x\in {\cal Q}_\q.
\eneq
Then, by Theorem \ref{B1B2-alg}, there are unitaries $w_\p\in A\otimes M_\p$ and $w_\q\in A\otimes M_\q$ such that $$\|[w_\p, \psi_\p(g)]\|< \delta/8,\quad  \|[w_\q, \psi_\q(g)]\|<\delta/8,$$ for any $g\in \mathcal G$, and
$$
\mathrm{Bott}(\psi_\p\circ \imath, w_\p)=\kappa_\p\circ {\boldsymbol{\bt}} \quad\mathrm{and}\quad  \mathrm{Bott}(\psi_\q\circ \imath, w_\q)=\kappa_\q\circ{\boldsymbol{\bt}}.
$$

Consider the unitaries $v_\p=w_\p u_\p$ and $v_\q=w_\q u_\q$. One then has that
$$\norm{\phi(g)\otimes 1-u^*_\p w^*_\p(\psi(g)\otimes 1)w_\p u_\p}<\delta/4
{\andeqn} \norm{\phi(g)\otimes 1-u_\q^* w^*_\q(\psi(g)\otimes 1)w_\q u_\q}<\delta/4,\quad\forall g\in\mathcal G.$$
 Hence $$\norm{[w_\p u_\p u^*_\q w^*_\q, \psi(g)\otimes 1]}<\delta/2,\quad \forall g\in\mathcal G.$$
In the following computation, we use $\psi\otimes 1$
for the map from $C$ to $A\otimes Q$ induced by $\psi.$ We have, by (\ref{AHtoC-n3}) and
(\ref{AHtoC-n4}), that
 \beq\label{AHtoC-n10}
 &&\mathrm{bott}_0(\psi\otimes 1, w_\p u_\p u^*_\q w^*_\q)|_{K_0(C)\cap {\cal P}}\\&=&
 \mathrm{bott}_0(\psi\otimes 1, w_\p)|_{K_0(C)\cap {\cal P}}+
 \mathrm{bott}_0(\psi\otimes 1, u_\p u^*_\q)|_{K_0(C)\cap {\cal P}}+
 \mathrm{bott}_0(\psi\otimes 1, w^*_\q)|_{K_0(C)\cap {\cal P}}\\
 &=&-(j_\p)_{*1}\circ \af_\p|_{K_0(C)\cap {\cal P}}+((j_\p)_{*1}\circ \af_\p-(j_\q)_{*1}\circ \af_\q)|_{K_0(C)\cap {\cal P}}
 +(j_\q)_{*1}\circ \af_\q|_{K_0(C)\cap {\cal P}}=0.
 \eneq

The same computation shows that
 \beq\label{AHtoC-n11}
 &&\mathrm{bott}_1(\psi\otimes 1, w_\p u_\p u^*_\q w^*_\q)|_{K_1(C)\cap {\cal P}}\\&=&
 \mathrm{bott}_1(\psi\otimes 1, w_\p)|_{K_1(C)\cap {\cal P}}+
 \mathrm{bott}_1(\psi\otimes 1, u_\p u^*_\q)|_{K_1(C)\cap {\cal P}}+
 \mathrm{bott}_1(\psi\otimes 1, w^*_\q)|_{K_1(C)\cap {\cal P}}\\
 &=&-(j_\p)_{*0}\circ\theta_\p|_{K_1(C)\cap {\cal P}}+((j_\p)_{*0}\circ \theta_\p-
 (j_\q)_{*0}\theta_\q)|_{K_1(C)\cap {\cal P}}+(j_\q)_{*0}\circ \theta_\q|_{K_1(C)\cap {\cal P}}=0.
 \eneq

Since $K_i(A\otimes Q)$ is  torsion free ($i=0,1$), the aboves imply
that
\beq\label{AHtoC-12}
\mathrm{Bott}(\psi\otimes 1, w_\p u_\p u^*_\q w^*_\q)|_{\cal P}=0.
\eneq

By the construction of $\Delta$, it is clear that $$\mu_{\tau\circ(\psi\otimes 1)}(O_a)\geq\Delta(a)$$ for all $a$, where $O_a$ is any open ball of $X$ with radius $a$; in particular, it holds for all $a\geq d$.

For each $1\leq i\leq k$, define (see (\ref{AHtoC-add1}))
$$L_{i, w_\p u_\p}=\overline{\langle (\mathbf 1_n-\psi(p_i)\otimes 1+(\psi(p_i)\otimes 1)w_\p u_\p)(\mathbf 1_n-\psi(q_i)\otimes 1+(\psi(q_i)\otimes 1)u^*_\p w^*_\p)} \rangle$$ and
$$L_{i, w_\q u_\q}=\overline{\langle (\mathbf 1_n-\psi(p_i)\otimes 1+(\psi(p_i)\otimes 1)w_\q u_\q)(\mathbf 1_n-\psi(q_i)\otimes 1+(\psi(q_i)\otimes 1)u^*_\q w^*_\q) \rangle},$$
and define the map
$\Gamma_\p: \Int^k\to U(A\otimes M_\p)/CU(A\otimes M_\p)$ by $\Gamma_\p(x_i)= L_{i, w_\p u_\p}$ and the map $\Gamma_\q: \Int^k\to U(A\otimes M_\q)/CU(A\otimes M_q)$ by $\Gamma_\q(x_i)= L_{i, w_\q u_\q}$.

By Corollary \ref{BB-exi-mat}, there are unitaries $\zeta_\p\in A\otimes M_{\p}, \zeta_\q\in A\otimes M_\q$ such that $$\norm{[\zeta_\p, \psi(g)\otimes 1_{M_\p}]}<\delta/4, \quad \norm{[\zeta_\q, \psi(g)\otimes 1_{M_\q}]}<\delta/4,\quad\forall g\in\mathcal G$$
$$\mathrm{Bott}(\psi\otimes 1_{M_\p}, \zeta_\p)|_{\mathcal P}=0,\quad \mathrm{Bott}(\psi\otimes 1_{M_\q}, \zeta_\q)|_{\mathcal P}=0,$$ and for any $1\leq i\leq k$,
$$
\mathrm{dist}(L_{i, \zeta^*_\p }, \Gamma_\p(x_i))\leq\gamma/2\andeqn
\mathrm{dist}(L_{i, \zeta^*_\q }, \Gamma_\q(x_i))\leq\gamma/2,
$$
 where
$$L_{i, \zeta^*_\p}=\overline{\langle (\mathbf 1_n-\psi(p_i)\otimes 1_{M_\p}+(\psi(p_i)\otimes 1_{M_\p})\zeta^*_{\p})(\mathbf 1_n-\psi(q_i)\otimes 1_{M_\p}+(\psi(q_i)\otimes 1_{M_\p})\zeta_\p) \rangle},$$ and
$$L_{i, \zeta^*_\q}=\overline{\langle (\mathbf 1_n-\psi(p_i)\otimes 1_{M_\q}+(\psi(p_i)\otimes 1_{M_\q})\zeta^*_\q)(\mathbf 1_n-\psi(q_i)\otimes 1_{M_\q}+(\psi(q_i)\otimes 1_{M_\q})\zeta_\q)\rangle}.$$

In particular, if denote by $v_0=\zeta_\p w_\p u_\p u^*_\q w^*_\q\zeta_\q^*$, one has that for any $1\leq i\leq k$,
$$
\mathrm{dist}(\overline{\langle (\mathbf 1_n-\psi(p_i)\otimes 1_Q+
(\psi(p_i)\otimes 1_Q)v_0)(\mathbf 1_n-\psi(q_i)\otimes 1_Q+
(\psi(q_i)\otimes 1_Q)v_0^*)\rangle}, \overline{1_n})<\gamma.
$$

Then, by Theorem \ref{hp-mat}, there is a continuous path of unitaries $v(t)$
in $A\otimes Q$ such that $v(1)=1$ and $v(0)=v_0 $,
and
$$\norm{[v(t), \psi(x)\otimes 1_Q]}<\eps/2\quad\forall x\in\mathcal F,
\ {\forall} t\in[0, 1].
$$

Consider the unitary $u(t)=v(t)\zeta_\q w_\q u_\q\in A\otimes \mathcal Z_{\mathfrak{p}, \mathfrak{q}}$, and it has the property $$\norm{\phi(f)\otimes 1-u^*(\psi(f)\otimes 1)u}<\eps,\quad\forall f\in\mathcal F.$$
One then embeds ${\mathcal Z}_{\p, \q}$ into ${\mathcal Z}$ to get the desired conclusion.
\end{proof}

Recall that $\mathcal C$ is the class of all simple separable C*-algebras $A$ for which $\mathrm{TR(A\otimes M_\fr)}\leq 1$ form some UHF-algebra $M_\fr$, where $\fr$ is a supernatural number of infinite type.

\begin{cor}\label{C1}
Let $C$ be a unital AH-algebra and let $A$ be a
unital  separable simple ${\cal Z}$-stable C*-algebra in ${\cal C}.$ Let $\phi, \psi: C\to A$ be two unital monomorphisms. Then there exists a sequence of unitaries
$\{u_n\}\subset A$ such that
$$
\lim_{n\to\infty} u_n^*{\psi}(c)u_n=\phi(c)\tforal c\in C,
$$
if and only if
$$
[\phi]=[\psi]\,\,\,\text{in}\,\,\, KL(C,A),\ \phi_{\sharp}=\psi_{\sharp}
\andeqn \phi^{\ddag}=\psi^{\ddag}.
$$
\end{cor}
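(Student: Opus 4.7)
The plan is to reduce the sufficiency direction to Theorem \ref{AHtoC} via $\mathcal{Z}$-stability; the necessity of the three conditions is routine (continuity of the $KL$-class, of induced tracial maps, and of the de la Harpe--Skandalis determinant under point-norm approximation on a countable dense subset of $C$) and is discussed in the introduction. I therefore concentrate on sufficiency.

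First I would check that the hypotheses of Theorem \ref{AHtoC} are satisfied. Since $A\in\mathcal{C}$, Definition \ref{Classes} and the remark there give $\mathrm{TR}(A\otimes M_\p)\le 1$ for every supernatural number $\p$ of infinite type; in particular, taking $\p=\prod_p p^{+\infty}$ (so that $M_\p=Q$) yields $\mathrm{TR}(A\otimes Q)\le 1$. Given any finite subset $\mathcal{F}\subset C$ and any $\epsilon>0$, Theorem \ref{AHtoC} then produces a unitary $u\in A\otimes \mathcal{Z}$ with $\|\phi(c)\otimes 1 - u^*(\psi(c)\otimes 1)u\| < \epsilon/3$ for all $c\in\mathcal{F}$.

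The second step is to pull this approximate equivalence back to $A$. Because $\mathcal{Z}$ is strongly self-absorbing and $A\otimes\mathcal{Z}\cong A$, a standard consequence (Toms--Winter) asserts that the embedding $\iota: A\to A\otimes \mathcal{Z}$, $a\mapsto a\otimes 1_\mathcal{Z}$, is approximately unitarily equivalent, in $A\otimes\mathcal{Z}$, to an isomorphism $\sigma: A\to A\otimes\mathcal{Z}$. Applied to the finite set $\{\phi(c)\otimes 1,\ \psi(c)\otimes 1: c\in\mathcal{F}\}\cup\{u\}$ with sufficiently small tolerance, this yields a unitary $V\in A\otimes\mathcal{Z}$ approximately intertwining $\iota$ and $\sigma$ on that set. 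A routine three-$\epsilon$ estimate, combined with the inequality of the previous paragraph, converts $u$ into a unitary $w\in A$ satisfying $\|\phi(c)-w^*\psi(c)w\|<\epsilon$ for all $c\in\mathcal{F}$. Since $\mathcal{F}$ and $\epsilon$ are arbitrary and $C$ is separable, a diagonal construction produces the desired sequence $\{u_n\}\subset A$.

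The substantive work of the corollary is already encoded in Theorem \ref{AHtoC}, whose proof assembles the AH-uniqueness theorem \ref{Uniq}, the vanishing of the Bott elements after tensoring with $Q=M_\p\otimes M_\q$, and the homotopy lemma 8.4 of \cite{Lin-hmtp}. The present deduction is essentially formal, and the only subtle point is the invocation of strong self-absorption of $\mathcal{Z}$ to transfer approximate unitary equivalence from $A\otimes\mathcal{Z}$ back to $A$; no genuinely new estimate is required.
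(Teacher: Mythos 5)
Your argument is correct and follows essentially the same route as the paper: Theorem~\ref{AHtoC} provides the approximate equivalence in $A\otimes\mathcal{Z}$, and $\mathcal{Z}$-stability --- via approximate unitary equivalence of $\imath:A\to A\otimes\mathcal{Z}$, $a\mapsto a\otimes 1$, with an isomorphism, or equivalently via an isomorphism $j:A\otimes\mathcal{Z}\to A$ with $j\circ\imath$ approximately inner --- transfers it back to $A$ by a three-$\epsilon$ estimate. One minor imprecision: the finite set on which $\iota$ and $\sigma$ are to be approximately intertwined should consist of elements of $A$, namely $\{\phi(c),\psi(c):c\in\mathcal{F}\}$, rather than $\{\phi(c)\otimes 1,\psi(c)\otimes 1:c\in\mathcal{F}\}\cup\{u\}\subset A\otimes\mathcal{Z}$; the unitary $u$ is handled simply by applying $j$ to it and conjugating, as in the paper's choice $u_n=w\,j(v)\,w^*$.
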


{
\begin{proof}
We only show the ``if" part. Suppose that $\phi$ and $\psi$ satisfy the condition.
Let $\ep>0$, and let ${\cal F}\subset C$ be a finite subset.
Then,
by \ref{AHtoC}, there exists a unitary
$v\in A\otimes {\cal Z}$ such that
\beq\label{C1-1}
\|v^*(\psi(a)\otimes 1)v-\phi(a)\otimes 1\|<\ep/3\tforal a\in {\cal F}.
\eneq
Let $\imath: A\to A\otimes {\cal Z}$ be defined by $\imath(a)=a\otimes 1$ for $a\in A.$
There exists an isomorphism $j: A\otimes {\cal Z}\to A$ such that
$j\circ \imath$ is approximately inner.
So there is a
unitaries  $w\in A$ such that
\beq\label{C1-2}
\|j(\psi(a)\otimes 1)-w^*\psi(a)w\|<\ep/3\andeqn
\|w^*\phi(a)w-j(\phi(a)\otimes 1)\|<\ep/3
\eneq
for all $a\in {\cal F}.$
Let $u=wj(v)w^*\in A$; then, for $a\in {\cal F},$
\beq
\|u^*\psi(a)u-\phi(a)\| &=&\|wj(v)^*w^*\psi(a)wj(v)w^*-\phi(a)\|\\
&\le &\|wj(v)^*w^*\psi(a)wj(v)w^*-wj(v)^*(j(\psi(a)\otimes 1)j(v)w^*\|\\
&&+\|wj(v)^*(j(\psi(a)\otimes 1)j(v)w^*-w(j(\phi(a)\otimes 1)w^*\|\\
&&+\|w(j(\phi(a)\otimes 1)w^*-\phi(a)\|\\
&<& \ep/3+\ep/3+\ep/3=\ep \,\rforal a\in {\cal F}.
\eneq
\end{proof}
}

{A version of the following is also obtained by H. Matui.}

\begin{cor}\label{C2}
Let $C$ be a unital AH-algebra and let $A$ be a unital separable simple \CA\, in ${\cal C}_0$ which is ${\cal Z}$-stable.  Suppose that $\phi, \psi: C\to A$ are two unital monomorphisms.
Then there exists a sequence of unitaries
$\{u_n\}\subset A$ such that
$$
\lim_{n\to\infty} u_n^*\phi(c)u_n=\psi(c)\tforal c\in C,
$$
if and only if
$$
[\phi]=[\psi]\,\,\,\text{in}\,\,\, KL(C,A),\ \phi_{\sharp}=\psi_{\sharp}
\andeqn \phi^{\ddag}=\psi^{\ddag}.
$$
\end{cor}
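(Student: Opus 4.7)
The plan is to mirror the proof of Corollary \ref{C1}, with the inputs Lemma \ref{lem2} and Theorem \ref{AHtoC} replaced by their ``tracial rank zero'' analogues, which do not require $C$ to have Property (J). The ``only if'' direction is already furnished by the general necessary conditions recalled in the introduction, so it suffices to prove the ``if'' direction.

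First, I would establish an analogue of Theorem \ref{AHtoC} in which the assumption $\mathrm{TR}(A\otimes Q)\le 1$ is strengthened to $\mathrm{TR}(A\otimes Q)=0$ (which holds because $A\in {\cal C}_0$, so $A\otimes M_\p$ has tracial rank zero for every supernatural number $\p$ of infinite type, see \ref{Classes}), and the assumption that $C$ has Property (J) is dropped. Concretely, for any finite subset ${\cal F}\subset C$ and any $\ep>0$, I would produce a unitary $v\in A\otimes {\cal Z}$ with
\[
\|v^*(\psi(x)\otimes 1)v-\phi(x)\otimes 1\|<\ep\quad\text{for all }x\in {\cal F}.
\]
The proof would follow the argument of Theorem \ref{AHtoC} line by line. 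One selects relatively prime supernatural numbers $\p,\q$ of infinite type with $\Q_\p+\Q_\q=\Q$; applies the homotopy input of Theorem 8.4 of \cite{Lin-hmtp} to reduce to producing unitaries $u_\p\in A\otimes M_\p$ and $u_\q\in A\otimes M_\q$ that approximately conjugate $\psi\otimes 1$ to $\phi\otimes 1$ with prescribed rotation data; invokes Property (B2) of $A\otimes M_\p$ and $A\otimes M_\q$ to correct the bott maps; and then deploys Exel's formula \ref{Exel} together with the splitting $\Q=\Q_\p+\Q_\q$ to conclude that $\mathrm{Bott}(\psi\otimes 1, u_\p u_\q^*)$ vanishes on the prescribed finite subset of $\underline{K}(C)$. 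The only step that uses Property (J) in \ref{AHtoC} is the appeal to Lemma \ref{lem2}; I would replace that appeal by the $\mathrm{TR}=0$ version described in Remark \ref{Rlem2}, which is based on Theorem 3.6 of \cite{Lnindiana}.

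Second, I would transfer the approximate unitary equivalence in $A\otimes {\cal Z}$ back to $A$ using ${\cal Z}$-stability, exactly as in the proof of Corollary \ref{C1}. Fix an isomorphism $j:A\otimes {\cal Z}\to A$ such that $j\circ\imath$ is approximately inner, where $\imath(a)=a\otimes 1$. Choose a unitary $w\in A$ which implements this approximate inner conjugation on $\phi({\cal F})\cup\psi({\cal F})$ to within $\ep/3$; then the unitary $u:=wj(v)w^*\in A$ satisfies $\|u^*\psi(x)u-\phi(x)\|<\ep$ for $x\in {\cal F}$, by the same three-term triangle inequality displayed in the proof of \ref{C1}.

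The main obstacle is verifying the first step, namely that every appeal to Property (J) in the proof of \ref{AHtoC} can indeed be removed under the stronger hypothesis $\mathrm{TR}(A\otimes Q)=0$. The sensitive step is precisely the invocation of Lemma \ref{lem2}; the use of Property (B2) of $A\otimes M_\p$ and $A\otimes M_\q$ (see \ref{B1B2-alg}) for an arbitrary unital AH-algebra $C$ is already available whenever the target has tracial rank at most one, so that input is unaffected. Once the $\mathrm{TR}=0$ substitute for \ref{lem2} is in hand, the remainder of the argument is formal bookkeeping identical to the proofs of \ref{AHtoC} and \ref{C1}.
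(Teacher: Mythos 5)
Your proposal is correct and follows the same route as the paper: the paper's proof of Corollary~\ref{C2} simply states that one proceeds exactly as in Theorem~\ref{AHtoC} and Corollary~\ref{C1}, replacing the appeal to Theorem~\ref{Uniq} (via Lemma~\ref{lem2}) with Theorem 3.6 of \cite{Lnindiana} and using Remark~\ref{Rlem2}, which is precisely what you do. You have also correctly identified that Property (B2) for a general AH-algebra $C$ is already available by Remark~\ref{B1B2-alg}, so the only adjustment needed is the one in Remark~\ref{Rlem2}.
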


\begin{proof}
The proof is exactly the same as that of \ref{AHtoC} {and \ref{C1}}. At where Theorem
\ref{Uniq} is applied, one applies Theorem 3.6  of \cite{Lnindiana} instead. One also uses Remark \ref{Rlem2}.
\end{proof}

\begin{lem}\label{N2N}
Let $A$ be a unital C*-algebra such that
$A\otimes M_\fr$ is an AH-algebra for
any supernatural number $\fr$ of infinite type.
Let $B\in {\cal C}$ be a unital separable C*-algebra, and let $\phi, \psi: A\to B$ be two unital monomorphisms.
Suppose that
\beq\label{L1-1}
[\phi]=[\psi]\,\,\,\,{\text in}\,\,\, KL(A, B),\\
\phi_{\sharp}=\psi_{\sharp}\andeqn \phi^{\ddag}=\psi^{\ddag}.
\eneq
Let $\p$ and $\q$ be two relatively prime supernatural numbers of infinite type with $M_\p\otimes M_\q=Q$.
Then, for any $\ep>0$ and any finite subset ${\cal F}\subset A\otimes {\cal Z}_{\mathfrak{p},\mathfrak{q}},$ there exists a unitary
$v\in B\otimes {\cal Z}_{\mathfrak{p},\mathfrak{q}}$  such that
\beq\label{L1-2}
\|v^*((\phi\otimes \mathrm{id})(a)) v-(\psi\otimes \mathrm{id})(a)\|<\ep\rforal a\in {\cal F}.
\eneq
\end{lem}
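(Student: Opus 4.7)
\textbf{Proof plan for \ref{N2N}.} I plan to replay the proof of Theorem \ref{AHtoC} with two adaptations. Although $A$ itself need not be an AH-algebra, the hypothesis $A\in\mathcal N\cap\mathcal C$ implies that $A\otimes M_\p$ and $A\otimes M_\q$ are unital separable simple amenable \CA s of tracial rank at most one satisfying the UCT; by the classification theorem (\cite{Lnuni1}), each is isomorphic to a unital simple AH-algebra without dimension growth, and hence has property (J). Since $B\otimes M_\p$ and $B\otimes M_\q$ also have tracial rank at most one, Lemma \ref{lem2} (together with \ref{lem-dense} and \ref{K0-image}) applies verbatim to the pairs $(A\otimes M_\p,\,B\otimes M_\p)$ and $(A\otimes M_\q,\,B\otimes M_\q)$ with the maps $\phi\otimes\mathrm{id}$ and $\psi\otimes\mathrm{id}$, whose compatibility conditions are inherited from (\ref{L1-1}).

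To accommodate a finite subset $\mathcal F$ in $A\otimes\mathcal Z_{\p,\q}$ rather than in the source algebra, I view $A\otimes\mathcal Z_{\p,\q}\subset C([0,1],A\otimes Q)$ and first extract a finite $\ep/4$-net $\mathcal F_Q\subset A\otimes Q$ for the compact set $\{a(t):a\in\mathcal F,\,t\in[0,1]\}$; restriction to the endpoints gives finite subsets $\mathcal F_\p\subset A\otimes M_\p$ and $\mathcal F_\q\subset A\otimes M_\q$. The rest of the argument tracks Theorem \ref{AHtoC}: apply Lemma \ref{lem2} in $(A\otimes M_\p,\,B\otimes M_\p)$ to find $u_\p\in B\otimes M_\p$ conjugating $\phi\otimes\mathrm{id}$ to $\psi\otimes\mathrm{id}$ on $\mathcal F_\p$ while realizing a prescribed determinant homomorphism $\eta$ (chosen via \ref{lem-dense}), and analogously find $u_\q\in B\otimes M_\q$; Exel's formula \ref{Exel} together with $\mathbb Q=\mathbb Q_\p+\mathbb Q_\q$ shows $\mathrm{Bott}(\psi\otimes 1,\,u_\p u_\q^*)\in\underline K(B\otimes Q)$ splits as $\theta_\p-\theta_\q$; property (B2) of $B\otimes M_\p$ and $B\otimes M_\q$ yields compensating unitaries $w_\p,w_\q$ cancelling these invariants; finally Theorem 8.4 of \cite{Lin-hmtp}, applied with $\mathcal F_Q$ as test set in $A\otimes Q$, produces a continuous path $v(t)\subset B\otimes Q$ from $w_\p u_\p u_\q^* w_\q^*$ to $1$ approximately commuting with $(\psi\otimes\mathrm{id}_Q)(\mathcal F_Q)$. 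The required unitary is $v:=v(\cdot)\cdot w_\q u_\q\in B\otimes\mathcal Z_{\p,\q}$, with $v(0)=w_\p u_\p\in B\otimes M_\p$ and $v(1)=w_\q u_\q\in B\otimes M_\q$.

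To verify the estimate, for each $a\in\mathcal F$ and $t\in[0,1]$ I pick $c\in\mathcal F_Q$ with $\|a(t)-c\|<\ep/4$; then $\|v(t)^*(\phi\otimes\mathrm{id}_Q)(a(t))v(t)-(\psi\otimes\mathrm{id}_Q)(a(t))\|$ is bounded by the net-approximation error, the approximate conjugation of $(\phi\otimes\mathrm{id}_Q)(c)$ to $(\psi\otimes\mathrm{id}_Q)(c)$ produced by the construction, and the approximate commutation of $v(t)$ with $(\psi\otimes\mathrm{id}_Q)(c)$, each arranged to contribute at most $\ep/4$.

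The main obstacle is the multi-step bookkeeping: the tolerances and the finite subsets $\mathcal F_Q,\mathcal F_\p,\mathcal F_\q$, the homomorphism $\eta$ and the finitely generated subgroups of $\underline K$ fed into Lemma \ref{lem2} and property (B2) in both tensor factors, and the finite subset $\mathcal P\subset\underline K(A\otimes Q)$ required by Theorem 8.4 of \cite{Lin-hmtp}, must all be coordinated so that the Bott invariants produced in the $\p$- and $\q$-sectors cancel on the prescribed generators and the resulting path $v(t)$ commutes sufficiently with $(\psi\otimes\mathrm{id}_Q)(\mathcal F_Q)$. This coordination is exactly that of Theorem \ref{AHtoC}; the genuinely new inputs are the reduction of $A\otimes M_\p$ to the AH-setting via \cite{Lnuni1} and the finite $\ep/4$-net replacement for the $t$-varying elements of $\mathcal F$.
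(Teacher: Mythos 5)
Your plan correctly identifies the two surface-level adaptations (passing from $A\otimes M_\p$ to the AH-setting via the classification theorem so that Lemma~\ref{lem2} applies, and handling $t$-varying test elements by sampling on a grid). But there is a genuine gap in the way you close the argument, and it is exactly at the point where the proof of Lemma~\ref{N2N} diverges from the proof of Theorem~\ref{AHtoC} that you propose to replay.

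In Theorem~\ref{AHtoC} the final unitary has the form $u(t)=v(t)\,v_\q$ with $v_\q=w_\q u_\q\in B\otimes M_\q$, and the estimate works because the test elements $\phi(x)\otimes 1$ are constant in $t$ (they live in $A\otimes 1_{Q}$). Since $v_\q$ commutes with $1\otimes M_\p$, conjugation by $v_\q$ on $\phi(x)\otimes 1_{Q}$ reduces exactly to conjugation at the $M_\q$-level, which $u_\q$ was built to control. In Lemma~\ref{N2N} the test elements are $a(t)=a_1\otimes b(t)$ with $b(t)\in M_\p\otimes M_\q$ having a non-trivial $M_\p$-component for $t\in(0,1)$. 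The unitary $u_\q$ (constructed from Lemma~\ref{lem2} in the $\q$-sector) only conjugates $(\phi\otimes\mathrm{id}_{M_\q})$ to $(\psi\otimes\mathrm{id}_{M_\q})$ on the finite set $\mathcal F_\q\subset A\otimes M_\q$; it gives no control on $(\phi\otimes\mathrm{id}_Q)(c)$ for $c\in\mathcal F_Q\subset A\otimes Q$ with non-trivial $M_\p$-part. The same objection applies one step earlier: $W=w_\p u_\p u_\q^*w_\q^*$ does not approximately commute with $(\psi\otimes\mathrm{id}_Q)$ on $\mathcal F_Q$, because neither $u_\p$ nor $u_\q$ conjugates correctly on those elements, so the hypotheses of Theorem~8.4 of~\cite{Lin-hmtp} with $\mathcal F_Q$ as test set are not met. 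Your concluding estimate therefore does not hold on $(0,1)$.

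The paper's proof patches this by inserting an extra unitary: since the hypotheses (\ref{L1-1}) pass to $\phi\otimes\mathrm{id}_Q,\psi\otimes\mathrm{id}_Q: A\otimes Q\to B\otimes Q$ and $A\otimes Q$ is (isomorphic to) a unital simple AH-algebra with property (J), Theorem~\ref{Uniq} yields a unitary $u\in B\otimes Q$ with $\norm{u^*(\phi\otimes\mathrm{id}_Q)(c)u-(\psi\otimes\mathrm{id}_Q)(c)}$ small for $c$ in a finite set $\mathcal L$ containing all the grid evaluations $a_1\otimes b(t_i)$. One then corrects $u$ by a unitary $y_\p$ so that $v:=y_\p u$ has $\mathrm{Bott}(\phi\otimes\mathrm{id}_Q, v v_\p^*)=0$ on $\mathcal P$, and builds the required path in three pieces: constant $=v$ on $[t_1,t_{m-1}]$, and on the two end intervals interpolating via \emph{two separate} applications of Theorem~8.4 of~\cite{Lin-hmtp} to the unitaries $vv_\p^*$ and $vv_\q^*$ (each of which \emph{does} approximately commute with $(\phi\otimes\mathrm{id}_Q)(\mathcal E)$, precisely because both $v$ and $v_\p$, respectively $v$ and $v_\q$, conjugate $\phi$ close to $\psi$ on the grid set). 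This genuinely three-part construction — not a single homotopy pushing $w_\p u_\p u_\q^*w_\q^*$ to $1$ — is what makes the middle-interval estimate go through; your proposal omits it.
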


{The proof of this lemma will be lengthy and technical in {nature}. However, the outline is the same as that of Theorem \ref{AHtoC}, that is, using homotopy lemmas, one could find a certain path of unitaries in $B\otimes Q$ such that it implements the approximate equivalence above when it is regarded as a unitary in $B\otimes\mathcal Z_{\mathfrak{p}, \mathfrak{q}}$. But since the domain C*-algebra $A$ is only assumed to be rational {tracial rank at most one,} in order to apply the homotopy lemmas, one also {needs} to interpolate paths in $A\otimes{\cal Z}_{\mathfrak{p},\mathfrak{q}}$, and this increases the technical difficulty of the proof.

\begin{proof}

Let $\fr$ be a supernatural number. Denote by $\imath_\fr: A\to A\otimes M_\fr$ the embedding defined by $\imath_\fr(a)=a\otimes 1$ for all $a\in A.$
Denote by $j_\fr: B\to B\otimes M_\fr$ the embedding defined by $j_\fr(b)=b\otimes 1$ for all $b\in B.$
{Without loss of generality, one may assume that $\mathcal F=\mathcal F_1\otimes\mathcal F_2,$
where $\mathcal F_1\subseteq A$ and $\mathcal F_2\subseteq {\cal Z}_{p, q}$ are finite subsets
and $1_A\in {\cal F}$ and $1_{{\cal Z}_{\p,\q}}\in {\cal F}_2.$} Moreover, one may assume that any element in $\mathcal F_1$ or $\mathcal F_2$ has norm at most one.

Let $0=t_0<t_1<\cdots<t_m=1$ be a partition of $[0, 1]$ such that
\begin{equation}\label{L3-01}
\norm{b(t)-b(t_i)}<\ep/4\quad\forall b\in\mathcal F_2,\ \forall t\in[t_{i-1}, t_i],\ i=1,...,m.
\end{equation}
Consider
\beq\nonumber
\mathcal E&=&\{a\otimes b(t_i);\ a\in\mathcal F_1, b\in\mathcal F_2, i=0,..., m\}\subseteq A\otimes Q{,}\\
\mathcal E_\p&=&\{a\otimes b(t_0);\ a\in\mathcal F_1, b\in\mathcal F_2\}\subseteq A\otimes M_\p\subset A\otimes Q
\andeqn\\
\mathcal E_\q&=&\{a\otimes b(t_m);\ a\in\mathcal F_1, b\in\mathcal F_2\}\subseteq A\otimes M_\q\subset A\otimes Q.
\eneq


Since $A\otimes Q$ is an AH-algebra, without loss of generality, one may assume that the finite subset $\mathcal E$ is in a C*-subalgebra of $A\otimes Q$ which is isomorphic to $C:=PM_n(\mathrm{C}(X))P$ (for some $n\geq 1$) for some compact metric space $X$.
Since $PM_n(\mathrm{C}(X))P=\lim_{m\to\infty}(P_mM_n(\mathrm{C}(X_m))P_m)$,
where $X_m$ are closed subspaces of finite CW-complexes, then, without
loss of generality, one may assume further that $X$ is a closed subset of a
finite CW-complex.

Fix a metric on $X$, and for any $a\in(0, 1)$, denote by $$\Delta(a)=\inf\{\mu_{\tau\circ(\phi\otimes\mathrm{id})}(O_a);\ \tau\in T(B), \textrm{$O_a$ an open ball of radius $a$ in $X$}\}.$$ Since $B$ is simple, one has that $0<\Delta(a)\leq 1$.


Let $\mathcal H\subset { C}$, $\mathcal P\subseteq \underline{K}({C})$, $\mathcal Q=\{{x_1, x_2, ..., x_m}\}\subset K_0(C)$
which generates a free subgroup of $\Kzero(C)$,
$\delta>0$, ${\gamma>0}$, and {$d>0$ (in the place of $\eta$)} be the constants  of Theorem \ref{hp-mat} with respect to $\mathcal E$, $\ep/8$, and $\Delta$.
{ We may assume that
$x_i=[p_i]-[q_i],$ where $p_i, q_i\in M_n(C)$ are projections
(for some integer $n\ge 1$), $i=1,2,...,m.$ {Moreover, we may assume
that $\gamma<1.$}}

Denote by $\infty$ the supernatural number associated with $\Q.$
Let ${\cal P}_i={\cal P}\cap K_i(A\otimes Q),$ $i=0,1.$
There is a finitely generated free subgroup $G({\cal P})_{i,0}\subset K_i(A)$
such that if one sets
\beq
G({\cal P})_{i,\infty, 0}={G}(\{ gr: g\in (\imath_{\infty})_{*i}(G({\cal P})_{i,0})\andeqn
r\in {D_0}\}),
\eneq
where $1\in {D_0}\subset \Q$ is a finite subset, then
$G({\cal P})_{i,\infty,0}$ contains the subgroup generated by ${\cal P}_i,$ $i=0,1.$
Moreover, we may assume that, if $r=k/m,$ where $k$ and $m$ are
nonzero integers, and $r\in {D_0},$ then
$1/m\in {D_0}.$ Let ${\cal P}_i'\subset K_i(A)$ be a finite subset
which generates $G({\cal P})_{i,0},$ $i=0,1.${Also denote by $\mathcal P'=\mathcal P'_0\cup\mathcal P'_1$.}

{Denote by $j: C\to A\otimes Q$ the embedding.}

{
Write the subgroup generated by the image of $\mathcal Q$ in $\Kzero(A\otimes Q)$ as  $\Int^k$ (for some integer $k\geq 1$). Choose $\{x'_1, ..., x'_k\}\subseteq \Kzero(A)$ and $\{r_{ij};\ 1\leq i\leq m, 1\leq j\leq k\}\subseteq \Ratn$ such that
$$
{j_{*0}(x_i)}=\sum_{j=1}^k r_{ij} x'_j,\quad 1\leq i\leq m,\ 1\leq j\leq k,
$$
and moreover, $\{x'_1, ..., x_k'\}$ generates a free subgroup of $\Kzero(A)$ of rank $k$.
Choose projections $p'_j, q'_j\in M_n(A)$ such that $x'_j=[p_j']-[q_j']$, $1\leq j\leq k$.
Choose an integer $M$ such that $Mr_{ij}$ are integers}  for $1\le i\le m$ and $1\le j\le k.$ In particular $Mx_i$ is the linear combination of $x_j'$ with integer coefficients.

{
Also noting that the subgroup of $\Kzero(A\otimes Q)$ generated by $\{(\imath_\infty)_{*0}(x_1'), ..., (\imath_\infty)_{*0}(x_k')\}$ is isomorphic to $\Int^k$} and the subgroup of $\Kzero(A\otimes M_\fr)$ generated by $\{(\imath_\fr)_{*0}(x_1'), ..., (\imath_\fr)_{*0}(x_k')\}$ has to be isomorphic to $\Int^k$, where $\fr=\p$ or $\fr=\q.$

{Since $A\otimes M_\fr$ is an AH-algebra, one can choose a $C^*$-subalgebra $C_\fr$  of $A\otimes M_\fr$ which is isomorphic to $P_\fr M_{n_\fr}(\mathrm{C}(X_\fr))P_\fr $} (for some $n_\fr\geq 1$) such that $\mathcal E_\fr\subseteq C_\fr$ and projections $\{p'_{1, \fr}, ..., p'_{k, \fr}, q_{1, \fr}', ..., q_{k, \fr}'\}\subseteq M_n(C_\fr)$ such that for any $1\leq j\leq k$,
\begin{equation}\label{approx-est1}
\norm{p'_j\otimes 1_{M_\fr}-p'_{j, \fr}}<\gamma/(32(1+\sum_{i, j'} \abs{Mr_{ij'}}))<1
\end{equation} and
\begin{equation}\label{approx-est2}
\norm{q'_j\otimes 1_{M_\fr}-q'_{j, \fr}}< \gamma/(32(1+\sum_{i, j'} \abs{Mr_{ij'}}))<1,
\end{equation}
where $X_\fr$ is a closed subset of a finite CW-complex, and $\fr=\p$ or $\fr=\q.$

Denote by $x'_{j, \fr}=[p'_{j, \tau}]-[q'_{j, \fr}]$, $1\leq j\leq k$, and denote by $G_\fr$ the subgroup of $\Kzero(C_\fr)$ generated by $\{x'_{1, \fr}, ..., x'_{k, \fr}\}$, and write $G_\fr=\Int^r\oplus\mathrm{Tor}(G_\fr)$. Since $G_\fr$ is generated by $k$ elements, one has that $r\leq k$ and $r=k$ if and only if $G_\fr$ is torsion free. Note that the image of $G_\fr$ in $\Kzero(A\otimes M_\fr)$ is the group generated by $\{[p'_1\otimes 1_{M_\fr}]-[q'_1\otimes 1_{M_\fr}], ..., [p'_k\otimes 1_{M_\fr}]-[q'_k\otimes 1_{M_\fr}] \}$, which is isomorphic to $\Int^k$ (with $\{[p'_j\otimes 1_{M_\fr}]-[q'_j\otimes 1_{M_\fr}];\ 1\leq j\leq k\}$ as the standard generators). Hence $G_\fr$ is torsion free and $r=k$.

Without loss of generality, one may assume that $\imath_\fr(\mathcal P')\subseteq\underline{K}(C_\fr)$.
{Assume that $\mathcal H$ is sufficiently large and $\delta$ is sufficiently small such that for any homomorphism $h$ from $A\otimes Q$ to $B\otimes Q$ and any unitary $z_j$ {($j=1,2,3,4$),} the map $\mathrm{Bott}(h, z_j)$
and ${\mathrm{Bott}}(h, w_j)$ {are} well defined on the subgroup generated by $\mathcal P$ and
$$
\mathrm{Bott}(h, w_j)=\mathrm{Bott}(h, z_1)+\cdots +\mathrm{Bott}(h, z_j)
$$
on the subgroup generated by ${\cal P},$
if $\|[h(x), z_j]\|<\delta$ for any $x\in\mathcal H,$ where $w_j=z_1\cdots z_j,$ $j=1,2,3,4.$  }

{By choosing larger ${\cal H}$ and smaller $\dt$, one may also assume that
\beq\label{AHtoC-add2}
\|h(p_i),\, z_j]\|<1/16\andeqn \|h(q_i),\, z_j]\|<1/16,\,\,\,1\le i\le m, j=1,2,3,4,
\eneq
and for any $1\leq i\leq m$,

\begin{equation}\label{AHtoC-add3}
\mathrm{dist}(\zeta_{i, z_1}^M, \prod_{j=1}^k (\zeta'_{j, z_1})^{Mr_{ij}})<\gamma/8,
\end{equation}
where
$$\zeta_{i, z_1}=\overline{\langle (\mathbf 1_n-h(p_i)+ h(p_i))z_1)(\mathbf 1_n-h(q_i)+h(q_i))z^*_1)\rangle},$$ and
$$\zeta'_{j, z_1}=\overline{\langle (\mathbf 1_n-h(p'_j\otimes 1_{A\otimes Q})+ h(p'_j\otimes 1_{A\otimes Q}))z_1)(\mathbf 1_n-h(q'_j\otimes 1_{A\otimes Q})+h(q'_j\otimes 1_{A\otimes Q}))z^*_1)\rangle}.$$

By choosing even smaller $\dt,$ without loss of generality,
we may assume that
$$
{\cal H}={\cal H}^0\otimes {\cal H}^\p\otimes {\cal H}^\q,
$$
where ${\cal H}^0\subset A,$
${\cal H}^\p\subset M_\p$ and ${\cal H}^\q\subset M_\q$ are finite subsets,
and $1\in {\cal H}^0,$ $1\in {\cal H}^\p$ and $1\in {\cal H}^\q.$

Moreover, choose $\mathcal H^0$}, {$\mathcal H^\p$ and $\mathcal H^\q$} even larger and $\delta$ even smaller so that for any homomorphism $h_\fr: A\otimes M_\fr\to B\otimes M_\fr$ and unitaries $z_1, z_2\in B\otimes M_\fr$ with $\norm{h_\fr(x), z_i}<\delta$ for any $x\in\mathcal H_0\otimes\mathcal H_\fr$, one has
\beq\label{AHtoC-add4}
\|h_\fr(p'_{i, \fr}),\, z_j]\|<1/16\andeqn \|h_\fr(q'_{i, \fr}),\, z_j]\|<1/16,\,\,\,1\le i\le k, j=1,2,
\eneq
and
$$\mathrm{dist}(\zeta_{i, z_1z_2}, \overline{(1_{B\otimes M_\fr})_n})<\mathrm{dist}(\zeta_{i, z^*_1}, \zeta_{i, z_2})+\gamma/(32(1+\sum_{i', j}\abs{Mr_{i'j}})),$$ where
$$
\zeta_{i, z'}=\overline{\langle (\mathbf 1_n-h_\fr(p'_{i, \fr})+ h_\fr(p'_{i, \fr}))z')(\mathbf 1_n-h_\fr(q'_{i, \fr})+h_\fr(q'_{i, \fr}))(z')^*)\rangle}, \quad z'=z_1z_2, z^*_1, z_2.
$$

Denote by {$C'=P'M_n(\mathrm{C}(\tilde{X}))P'$,}
${\iota}: C'\to A\otimes Q$, $\delta_2$ {(in the place of $\dt$)} {the constant}, {$\mathcal G\subseteq \Kone(\mathrm{C}(\tilde{X}))$
(in the place of $\mathcal Q$)} the finite subset in
Theorem \ref{B1B2-alg} with respect to {$A\otimes Q$ (in the place of $C$), $B\otimes Q$ (in the place of $A$), $\phi\otimes\mathrm{id}_Q$ (in the place of $h$)}, $\delta/4$
{(in the place of $\epsilon$)}, $\mathcal H$ {(in the place of $\F$)} and $\mathcal P$.
{Note that $\tilde{X}$ is a finite CW-complex.}

Let $\mathcal H'\subseteq A\otimes Q$
be a finite subset and assume that $\delta_2$ is small enough such that for any homomorphism $h$ from $A\otimes Q$ to $B\otimes Q$ and
any unitary $z_j${ ($j=1,2,3,4$),} the map $\mathrm{Bott}(h, z_j)$
and ${\mathrm{Bott}}(h, w_j)$ is well defined on the subgroup $[\iota](\underline{K}(C'))$
and
$$
\mathrm{Bott}(h, w_j)=\mathrm{Bott}(h, z_1)+\cdots +\mathrm{Bott}(h, z_j)
$$
on the subgroup $[\iota](\underline{K}(C'))$, 
if $\|[h(x), z_j]\|<\delta_2$ for any $x\in\mathcal H',$ where $w_j=z_1\cdots z_j,$ $j=1,2,3,4.$
Furthermore, as above,
one may assume, without loss of generality, that
$$
{\cal H}'={\cal H}^{0'}\otimes {\cal H}^{\p'}\otimes {\cal H}^{\q'},
$$
where ${\cal H}^0\subseteq {\cal H}^{0'}\subset A,$
${\cal H}^\p\subseteq {\cal H}^{\p'}\in M_\q$ and ${\cal H}^\q\subseteq {\cal H}^{\q'}\subset M_\q$ are finite subsets.

Let $\delta'_2>0$ be a constant such that for any unitary with $\|u-1\|<\delta'_2$, one has that $\|\log u\|<\delta_2/4$. Without loss of generality, one may assume that $\delta'_2<\delta_2/4<\eps/4$ and $\delta_2'<\delta$.

Let $C'_\fr:=P_\fr M_{n}\mathrm{C}(X'_\fr) P_\fr$ (in the place of $C'$), $\iota'_\fr: C_\fr\to A\otimes M_\fr$ (in the place of $\iota$), $\mathcal R_\fr\subset \Kone(C'_\fr))$ (in the place of $\mathcal Q$)
and $\delta_\fr$ (in the place of $\delta$) be the finite subset and constant of Theorem \ref{B1B2-alg} with respect to $A\otimes M_\fr$ (in the place of $C$), $B\otimes M_\fr$ (in the place of $A$), $\phi\otimes\mathrm{id}_{M_\fr}$
(in the place of $h$), ${\cal H}^{0'}\otimes {\cal H}^{\fr'}$ (in place of ${\cal F}$)
and $(\imath_\fr)_{*0}({\cal P}_0') \cup (\imath_\fr)_{*1}({\cal P}_1')$
(in the place of $\mathcal P$) and $\delta'_2/8$ (in place of $\ep$)
 ($\fr=\p$ or $\fr=\q$). Note that $X'_\fr$ is a finite CW-complex with $\Kone(C'_\fr)=\Int^{k_\fr}\oplus\mathrm{Tor}(\Kone(C'_\fr))$.
Let ${\cal R}_\fr^{(i)}={(\iota_\fr')_{*i}(K_i(C'_\fr))},$ $i=0,1$. There is a finitely generated subgroup
$G_{i,0,\fr}\subset K_i(A)$ and a finitely generated subgroup $D_{0, \fr}\subseteq \Ratn_\fr$ so that
$$
G_{i,0,\fr}':=G(\{gr: g \in (\imath_\fr)_{*i}(G_{i,0,\fr})\andeqn
r\in  D_{0, \fr}\})
$$
contains the subgroup ${\cal R}_\fr^{(i)},$ $i=0,1.$ {Without loss of generality, one may assume that $D_{0, \p}=\{\frac{k}{m_\p};\ k\in\Int\}$ and $D_{0, \q}=\{\frac{k}{m_\q};\ k\in\Int\}$
for an integer $m_\p$ divides $\p$ and an integer $m_\q$ divides $\q$.}

{Let ${\mathcal R}\subset \underline{K}(A\otimes Q)$
be a finite subset which generates a subgroup containing $$\frac{1}{m_\p m_\q}((\imath_{\p, \infty})_*(G'_{0,0, \p}\cup G'_{1, 0, \p})\cup(\imath_{\q, \infty})_*(G'_{0,0, \q}\cup G'_{1, 0, \q}))$$ in $\underline{K}(A\otimes Q)$, where $\imath_{\fr, \infty}$ is the canonical embedding $A\otimes M_\fr\to A\otimes Q$, $\fr=\p, \q$.
Without loss of generality, one may also assume that $\mathcal R\supseteq \iota_{*1}(\mathcal G)$.}
Let $\mathcal{H}_\fr\subset A\otimes M_\fr$ be a finite subset and
$\delta_3>0$ such that for any homomorphism $h$ from
$A\otimes M_\mathfrak{r}$ to $B\otimes M_\mathfrak{r}$ ($\fr=\p$ or $\fr=\q$)
any unitary $z_j$ {($j=1,2,3,4$),} the map $\mathrm{Bott}(h, z_j)$
and ${\mathrm{Bott}}(h, w_j)$ {are} well defined on the subgroup $[\iota'_\fr](\underline{K}(C'_\fr))$
and
$$
\mathrm{Bott}(h, w_j)=\mathrm{Bott}(h, z_1)+\cdots +\mathrm{Bott}(h, z_j)
$$
on the subgroup generated by $[\iota'_\fr](\underline{K}(C'_\fr)),$
if $\|[h(x), z_j]\|<\delta_3$ for any $x\in {\cal H}_\fr,$ where $w_j=z_1\cdots z_j,$ $j=1,2, 3, 4.$
Without loss of generality, we assume that
$\mathcal H^{0}\otimes {\cal H}^\p\subset {\cal H}_\p$ and
${\cal H}^0\otimes {\cal H}^\q\subset {\mathcal H}_\q.$ Furthermore,
we may also assume that
$$
{\cal H}_\fr={\cal H}_{0,0}\otimes {\cal H}_{0,\fr}
$$
{for} some finite subsets $\mathcal H_{0, 0}$ and $\mathcal H_{0, \fr}$ with ${\cal H}^{0'}\subset  {\cal H}_{0, 0}\subset A,$ ${\cal H}^{\p'}\subset {\cal H}_{0,\p}\subset M_\p$ and ${\cal H}^{\q'}\subset {\cal H}_{0,\q}$. In addition, we may also assume that $\dt_3<\dt_2/2.$

Furthermore, one may assume that $\delta_3$ is sufficiently small such that, for any unitaries $z_1, z_2, z_3$ in a C*-algebra with tracial states, $\tau(\frac{1}{2\pi i}\log(z_iz_j^*))$ ($i, j=1,2,3$) is well defined and $$\tau(\frac{1}{2\pi i}\log(z_1z_2^*))=\tau(\frac{1}{2\pi i}\log(z_1z_3^*))+ \tau(\frac{1}{2\pi i}\log(z_3z_2^*)) $$ for any tracial state $\tau$, whenever $\|z_1-z_3\|<\delta_3$ and $\|z_2-z_3\|<\delta_3$.

To simply notation, we also assume that,
for any unitary $z_j$, ($j=1,2,3,4$) the map $\mathrm{Bott}(h, z_j)$
and ${\mathrm{Bott}}(h, w_j)$ {are} well defined on the subgroup generated by $\mathcal R$ and
$$
\mathrm{Bott}(h, w_j)=\mathrm{Bott}(h, z_1)+\cdots + \mathrm{Bott}(h, z_j)
$$
on the subgroup generated by ${\cal R},$
if $\|[h(x), z_j]\|<\delta_3$ for any $x\in\mathcal H'',$ where $w_j=z_1\cdots z_j,$ $j=1,2,...,4,$
and assume that
$$
{\cal H}''={\cal H}_{0,0}\otimes {\cal H}_{0,\p}\otimes {\cal H}_{0,\q}.
$$





  Let ${\cal R}^i={\cal R}\cap K_i(A\otimes Q).$
There is a finitely generated subgroup $G_{i,0}$ of $K_i(A)$ and there is a
  finite subset $D'_0\subset \Q$ such that
  $$
  G_{i,\infty}:={G}(\{gr: g\in (\imath_{\infty})_{*i}(G_{i,0}))\andeqn r\in D_0'\})
  $$
  contains the subgroup generated by ${\cal R}^i,$ $i=0,1.$ Without loss of generality, we may assume that $G_{i,\infty}$ is the subgroup generated by
  ${\cal R}^i.$
  Note that we may also assume that
  $G_{i,0}\supset G({\cal P})_{i,0}$ and $1\in  D_0'\supset D_0.$
  Moreover, we may assume that, if $r=k/m,$ where $m,k$ are relatively prime non-zero integers,
  and $r\in D_0',$ then $1/m\in D_0'.$
  We may also assume that $G_{i,0,\fr}\subseteq G_{i,0}$ for $\fr=\p, \q$ and $i=0,1.$
  Let
  ${\cal R}^{i'}\subset K_i(A)$ be a finite subset which generates $G_{i,0},$ $i=0,1.$ Choose a finite subset $\mathcal U\subset U_n(A)$ for some $n$
  such that for any element of ${\mathcal {R}^1}'$, there is a representative in $\mathcal U$. Let $S$ be a finite subset of $A$ such that if $(z_{i, j})\in \mathcal U$, then $z_{i, j}\in S$.

Denote by $\delta_4$ and
$\mathcal{Q}_\fr\subset \Kone(A\otimes M_\fr)\cong\Kone(A)\otimes \Q_\fr$ the constant
and finite subset of Lemma \ref{lem2} corresponding to
$\mathcal E_\fr\cup {\mathcal{H}}_\fr\otimes 1 \cup\imath_\fr(S)$ (in the place of $\F$), $\imath_\fr(\mathcal U)$ (in the place of $\mathcal P$)
and
${\frac{1}{n^2}}\min\{\delta'_2/8, \delta_3/4\}$ (in the place of $\eps$) ($\fr=\p$ or $\fr=\q$).
 We may assume that ${\cal Q}_\fr=\{x\otimes r: x\in {\cal Q}'\andeqn r\in D_\fr''\},$
  where ${\cal Q}'\subset K_1(A)$ is a finite subset and
  $D_\fr''\subset \Q_\fr$ is also a finite subset.
  Let $K=\max\{|r|: r\in D''_\p\cup D''_\q\}.$
Since
$
[\phi]=[\psi]\,\,\,\,{\textrm{in}}\,\,\, KL(A, B),
$
$
\phi_{\sharp}=\psi_{\sharp}\andeqn \phi^{\ddag}=\psi^{\ddag},
$
by Lemma \ref{lem-dense},  $\overline{R}_{\phi, \psi}(\Kone(A))\subseteq \overline{\rho_B(K_0(B))}\subset \aff(\mathrm{T}(B))$. Therefore, there is a map $\eta: G(\mathcal Q') \to \overline{\rho_B(K_0(B))}\subset \mathrm{Aff}(\mathrm{T}(B))$ such that
\beq\label{N2N-etaadd}
(\eta- {\overline{R}}_{\phi, \psi}) ([z])\in \rho_B(\Kzero(B))\andeqn \|\eta(z)\|<{\delta_4\over{1+K}}\tforal z\in \mathcal Q'
\eneq

Consider the map $\phi_{\fr}=\phi\otimes\mathrm{id}_{M_\mathfrak{r}}$ and $\psi_{\fr}=\psi\otimes\mathrm{id}_{M_\mathfrak{r}}$ ($\fr=\p$ or $\fr=\q$).
Since $\eta$ vanishes on the torsion part of $G( {\cal Q}'),$
there is {a \hm\,}$\eta_\fr: G((\imath_\fr)_{*1}({\cal Q}'))\to
\overline{\rho_{B\otimes M_\fr}(K_0(B\otimes M_\fr))}\subset \aff(\tr(B\otimes M_\fr))$ such that
\beq\label{N2N-eta}
\eta_\fr\circ (\imath_\fr)_{*1}=\eta.
\eneq
Since
${\overline{\rho_{B\otimes M_\fr}(K_0(B\otimes M_\fr))}}=\overline{\R\rho_B(K_0(B))}$
is divisible, one can
extend $\eta_\fr$ so it defines on  $\Kone(A)\otimes \mathbb Q_\mathfrak{r}.$ We will  continue to use $\eta_\fr$ for the extension.
{ It follows from (\ref{N2N-etaadd})  that $\eta_\fr(z)-{\overline{R}}_{\phi_{\fr}, \psi_{\fr}}(z)\in \rho_{B\otimes M_\mathfrak{r}}(\Kzero(B\otimes M_\mathfrak{r}))$} and $ \|\eta_\fr(z)\|<\delta_4$ for all $z\in \mathcal Q_\fr$. By Lemma \ref{lem2}, there exists a unitary $u_\p\in B\otimes M_\p$ such that
\begin{equation}\label{end-p}
\|u^*_\p(\phi\otimes\mathrm{id}_{M_\p})(c)u_\p- (\psi\otimes\mathrm{id}_{M_\p})(c)\|<{\frac{1}{n^2}}\min\{\delta'_2/8, \delta_3/4\},\quad\forall c\in\mathcal E_\p\cup \mathcal {\cal H}_\p \cup\imath_\p(S).
\end{equation}
{Note that $$\|u^*_\p(\phi\otimes\mathrm{id}_{M_\p})(z)u_\p- (\psi\otimes\mathrm{id}_{M_\p})(z)\|<\delta_3\quad\textrm{for any}\ z\in\mathcal U.$$ Therefore}
 $\tau(\frac{1}{2\pi i}\log(u^*_\p(\phi\otimes\mathrm{id}_{\p})(z)
u_\p({\psi\otimes\mathrm{id}_{\p})(z^*)}))=\eta_\p({[z]})(\tau)$ for all $z\in
{\imath_\p({\cal U})}$,
{where we identify $\phi$ and $\psi$ with
$\phi\otimes {\mathrm{id}}_{M_n}$ and
$\psi\otimes {\mathrm{id}}_{M_n},$ and
$u_\p$ with $u_\p\otimes 1_{M_n}$, respectively.}

The same argument shows that there is a unitary $u_\q\in B\otimes M_\q$ such that
\begin{equation}\label{end-q'}
\|u^*_\q(\phi\otimes\mathrm{id}_{M_\q})(c)u_\q- (\psi\otimes\mathrm{id}_{M_\q})(c)\|<{\frac{1}{n^2}}\min\{\delta'_2/8, \delta_3/4\}, \quad\forall c\in\mathcal E_\q\cup \mathcal{H}_\q{\cup\imath_\p(S)},
\end{equation}
and $\tau(\frac{1}{2\pi i}\log(u^*_\q(\phi\otimes\mathrm{id}_{\q})(z)u_
\q({\psi\otimes\mathrm{id}_{\q})(z^*})))=\eta_\q({[z]})(\tau)$ for all $z\in\mathcal \imath_\q({\mathcal U})$,
{where we identify $\phi$ and $\psi$ with
$\phi\otimes {\mathrm{id}}_{M_n}$ and
$\psi\otimes {\mathrm{id}}_{M_n},$ and
$u_\q$ with
$u_\q\otimes 1_{M_n},$ respectively.}
We will also identify $u_\p$ with $u_\p\otimes 1_{M_\q}$ and
$u_\q$ with $u_\q\otimes 1_{M_\p}$ respectively. Then $u_\p u_\q^*\in A\otimes Q$ and
one estimates that for any
$c\in  {\cal H}_{00}\otimes {\cal H}_{0,\p}\otimes {\cal H}_\q,$
\begin{eqnarray}
\|u_\q u_\p^*(\phi\otimes 1_Q(c)) u_\p u_\q^*-(\phi\otimes 1_Q)(c)\|<\delta_3,
\end{eqnarray}
and hence $\mathrm{Bott}(\phi\otimes \mathrm{id}_Q, u_\p u^*_\q)(z)$ is well defined on the subgroup generated by $\mathcal R$. Moreover, for any $z\in{\mathcal U}{,}$ by the Exel formula (see {\ref{Exel}}) and applying
(\ref{N2N-eta}),
\begin{eqnarray}
&&\hspace{-0.5in}
\tau(\mathrm{bott}_1(\phi\otimes \mathrm{id}_Q, u_\p u^*_\q)((\imath_{\infty})_{*1}({[z]})))\\
&=&
\tau(\mathrm{bott}_1(\phi\otimes \mathrm{id}_Q, u_\p u^*_\q)({\imath_{\infty}}(z)))\\
&=&\tau(\frac{1}{2\pi i}\log(u_\p u^*_\q(\phi\otimes \mathrm{id}_Q)({\imath_{\infty}}(z)))u_\q u^*_\p(\phi\otimes \mathrm{id}_Q)({\imath_{\infty}}(z))^*)\\
&=&\tau(\frac{1}{2\pi i}\log(u^*_{{\q}}(\phi\otimes\mathrm{id}_{Q})({\imath_{\infty}}(z))))
u_{{\q}}({\psi\otimes\mathrm{id}_{Q})({\imath_{\infty}}(z^*}))))\\
&&-\tau(\frac{1}{2\pi i}\log(u^*_\p(\phi\otimes\mathrm{id}_{Q})({\imath_\infty (z)})
u_\p({\psi\otimes\mathrm{id}_{Q})({\imath_{\infty}}(z^*}))))\\
&=&\eta_{{\q}}((\imath_{{\q}})_{*1}({[z]}))(\tau)-\eta_{{\p}}((\imath_{{\p}})_{*1}({[z]}))(\tau)\\
&=&\eta({[z]})(\tau)-\eta({[z]})(\tau)=0\tforal \tau\in T(B),
\end{eqnarray}
{where we identify $\phi$ and $\psi$ with
$\phi\otimes {\mathrm{id}}_{M_n}$ and
$\psi\otimes {\mathrm{id}}_{M_n},$ and
$u_\p$ and $u_\q$ with $u_\p\otimes 1_{M_n}$ and $u_\q$ with
$u_\q\otimes 1_{M_n},$ respectively.}

Now suppose that $g\in G_{1,\infty}.$ Then $g=(k/m)(\imath_{\infty})_{*1}({[z]})$ for some $z\in {\mathcal U},$ where
 $k,m$ are non-zero integers.
It follows that
\beq
\tau(\mathrm{bott}_1(\phi\otimes \mathrm{id}_Q, u_\p u^*_\q)(mg))=
k\tau(\mathrm{bott}_1(\phi\otimes \mathrm{id}_Q, u_\p u^*_\q)(({[z]}))=0
\eneq
for all $\tau\in \tr(B).$ Since $\aff(\tr(B))$ is torsion free, it follows
that
\beq
\tau(\mathrm{bott}_1(\phi\otimes \mathrm{id}_Q, u_\p u^*_\q)(g)=0
\eneq
for all $g\in G_{1,\infty}$ and $\tau\in \tr(B).$
Therefore, the image of ${\cal R}^1$ under $\mathrm{bott}_1(\phi\otimes\mathrm{id}_Q, u_\p u^*_\q)$ is in
$\ker\rho_{B\otimes Q}$.
{One may write}
$$G_{1, 0}=\Int^r\oplus\Int/p_1\Int\oplus\cdots\oplus\Int/p_s\Int,$$ where $r$ is a non-negative integer and $p_1, ..., p_s $ are powers of primes numbers. Since $\p$ and $\q$ are relatively prime, one then has the decomposition $$G_{1, 0}=\Int^r\oplus \mathrm{Tor}_\p(G_{1, 0})\oplus \mathrm{Tor}_\q(G_{1, 0}) \subseteq \Kone(A),$$ where $\mathrm{Tor}_\p(G_{1, 0})$ consists of the torsion-elements with their orders divide $\p$ and $\mathrm{Tor}_\q(G_{1, 0})$ consists of the torsion-elements with their orders divide $\q$. Fix this decomposition.

Note that the restriction of $(\imath_\p)_{*1}$ to $\Int^r\oplus \mathrm{Tor}_\q(G_{1, 0})$ is injective and the restriction to $\mathrm{Tor}_\p(G_{1, 0})$ is zero, and the restriction of $(\imath_\q)_{*1}$ to $\Int^r\oplus \mathrm{Tor}_\p(G_{1, 0})$ is injective and the restriction to $\mathrm{Tor}_\q(G_{1, 0})$ is zero.

Moreover, using the assumption that $\p$ and $\q$ are relatively prime again, for any element $k\in (\imath_\q)_{*1}(\Int^r\oplus \mathrm{Tor}_\p(G_{1, 0}))$ and any nonzero integer $q$ which divides $\q$, the element $k/q$ is well defined in $\Kone(A\otimes M_\q)$; that is, there is a unique element $s\in\Kone(A\otimes M_\q)$ such that $qs=k$.

Denote by $e_1, ..., e_r$ the standard generators of $\Int^r$. It is also clear that $$(\imath_\infty)_{*1}(\mathrm{Tor}_\p(G_{1, 0}))=(\imath_\infty)_{*1}(\mathrm{Tor}_\q(G_{1, 0}))=0.$$

Recall that $D_{0, \p}=\{k/m_\p; \ k\in \Int\}\subset \Ratn_\p$ and $D_{0, \q}=\{k/m_\q; \ k\in \Int\}\subset \Ratn_\q$ for an integer $m_\p$ dividing  $\p$ and an integer $m_\q$ dividing $\q$. Put $m_\infty=m_\p m_\q$.

{Consider $\frac{1}{m_\infty}\Int^r\in\Kone(A\otimes Q)$, and} for each $e_i$, $1\leq i\leq r$, consider $$\frac{1}{m_\infty}\mathrm{bott}_1(\phi\otimes \mathrm{id}_Q, u_\p u^*_\q)((\imath_\infty)_{*1}(e_i))\in\ker\rho_{B\otimes Q}.$$
{Since $\ker\rho_{B\otimes Q}\cong(\ker{\rho}_B)\otimes\Ratn$, $\ker\rho_{B\otimes M_\p}\cong(\ker{\rho}_B)\otimes\Ratn_\p$, and $\ker\rho_{B\otimes M_\q}\cong(\ker{\rho}_B)\otimes\Ratn_\q$,} using the same arguments as that of Theorem \ref{AHtoC}, there are $g_{i, \p}\in\ker \rho_{B\otimes M_\p}$ and $g_{i, \q}\in\ker \rho_{B\otimes M_\q}$ such that   $$
\mathrm{bott}_1(\phi\otimes \mathrm{id}_Q, u_\p u^*_\q)(\frac{1}{m_\infty}((\imath_\infty)_{*1}(e_i)))={(j_\p)_{*0}(g_{i, \p})+(j_\q)_{*0}(g_{i, \q})},
$$ where $g_{i, \p}$ and $g_{i, \q}$ are identified as their images in $\Kzero(A\otimes Q)$.

Note that the subgroup $(\imath_\p)_{*1}(G_{1, 0})$ in $\Kzero(A\otimes M_\p)$ is isomorphic to $\Int^r\oplus\mathrm{Tor}_\q$ and  $\frac{1}{m_\p}(\Int^r\oplus\mathrm{Tor}_\q)$ is well defined in $\Kzero(A\otimes M_\p)$, and the subgroup $(\imath_\q)_{*1}(G_{1, 0})$ in $\Kzero(B\otimes M_\q)$ is isomorphic to $\Int^r\oplus\mathrm{Tor}_\p$ and $\frac{1}{m_\q}(\Int^r\oplus\mathrm{Tor}_\p)$ is well defined in $\Kzero(A\otimes M_\q)$.
One then defines the maps $\theta_\p: \frac{1}{m_\p}(\imath_\p)_{*1}(G_{1, 0})\to\ker\rho_{B\otimes M_\p}$ and $\theta_\q: \frac{1}{m_\q}(\imath_\q)_{*1}(G_{1, 0})\to\ker\rho_{B\otimes M_\q}$ by
$$\theta_\p(\frac{1}{m_\p}(\iota_\p)_{*1}(e_i))=m_\q g_{i, \p}\quad\mathrm{and}\quad \theta_\q(\frac{1}{m_\q}(\iota_\q)_{*1}(e_i))=m_\p g_{i, \q}$$
for $1\leq i\leq r$ and
$$\theta_\p|_{\mathrm{Tor}( (\imath_\p)_{*1}(G_{1, 0}))}=0 \quad\mathrm{and}\quad \theta_\q|_{\mathrm{Tor}( (\imath_\q)_{*1}(G_{1, 0}))}=0.$$
Then, for each $e_i$, one has
\begin{eqnarray*}
&&{(j_\p)_{*0}\circ\theta_\p\circ (\imath_{\p})_{*1}(e_i)+(j_\q)_{*0}\circ \theta_\q\circ (\imath_{q})_{*1}(e_i)}\\
&=&{m_\p(\frac{1}{m_\p}(j_\p)_{*0}\circ \theta_\p\circ (\imath_{\p})_{*1}(e_i))+m_\q(\frac{1}{m_\q}(j_\q))_{*0}\circ \theta_\q\circ (\imath_{q})_{*1}(e_i))}\\
&=& {m_\p m_\q ((j_\p)_{*0}(g_{i, p}) + (j_\q)_{*0}(g_{i, q}))}\\
&=& m_\infty \mathrm{bott}_1(\phi\otimes\mathrm{id}_Q, u_\p u^*_\q)\circ (\imath_{\infty})_{*1}(e_i/m_\infty)\\
&=& \mathrm{bott}_1(\phi\otimes\mathrm{id}_Q, u_\p u^*_\q)\circ (\imath_{\infty})_{*1}(e_i).
\end{eqnarray*}
Since the restriction of $\theta_\p\circ(\imath_\p)_{*1}$, $\theta_\q\circ(\imath_\q)_{*1}$ and $\mathrm{bott}_1(\phi\otimes\mathrm{id}_Q, u_\p u^*_\q)\circ (\imath_{\infty})_{*1}$ to the torsion part of $G_{1, 0}$ is zero, one has
$$\mathrm{bott}_1(\phi\otimes\mathrm{id}_Q, u_\p u^*_\q)\circ (\imath_{\infty})_{*1}={(j_\p)_{*0}\circ \theta_\p\circ (\imath_{\p})_{*1}+(j_\q)_{*0}\circ \theta_\q\circ (\imath_{q})_{*1}}.$$

The same argument shows that there also exist maps
$\alpha_\p: \frac{1}{m_\p}((\imath_\p)_{*0}(G_{0,0}))\to\Kone(B\otimes M_\p)$ and $\alpha_\q: \frac{1}{m_\q}((\imath_\q)_{*0}(G_{0,0}))\to\Kone(B\otimes M_\q)$ such that
$$\mathrm{bott}_0(\phi\otimes\mathrm{id}_Q, u_\p u^*_\q)\circ(\imath_{\infty})_{*0}=(j_\p)_{*1}\circ \alpha_\p\circ (\imath_{\p})_{*0}+(j_\q)_{*1}\circ \alpha_\q\circ (\imath_{\q})_{*0}$$
on $G_{0,0}.$

Note that $G_{i, 0, \fr}\subseteq G_{i, 0}$, $i=0, 1$, $\fr=\p, \q$. In particular, one has that $(\imath_\fr)_{*i}(G_{i, 0, \fr})\subseteq (\imath_\fr)_{*i}(G_{i, 0})$, and therefore $G'_{1, 0, \p}\subseteq \frac{1}{m_\p} (\imath_\p)_{*0}(G_{1, 0})$ and $G'_{1, 0, \q}\subseteq \frac{1}{m_\q} (\imath_\q)_{*0}(G_{1, 0})$. Then the maps $\theta_\p$ and $\theta_\q$ can be restricted to $G'_{1, 0, \p}$ and $G'_{1, 0, \q}$ respectively. Since the group $G'_{i, 0, \fr}$ contains $(\iota'_\fr)_{*i}(K_i(C_\fr'))$, the maps $\theta_\p$ and $\theta_\q$ can be restricted further to $(\iota'_\p)_{*1}(K_1(C_\p'))$ and $(\iota'_\q)_{*1}(K_1(C_q'))$ respectively.

For the same reason, the maps $\alpha_\p$ and $\alpha_\q$ can be restricted to $(\iota'_\p)_{*0}(K_0(C_\p'))$ and $(\iota'_\q)_{*0}(K_0(C_q'))$ respectively. We keep the same notation for the restrictions of these maps $\alpha_\p$, $\alpha_\q$, $\theta_\p$, and $\theta_\q$.

{By the universal multi-coefficient theorem, there is $\kappa_\p\in\mathrm{Hom}_{\Lambda}(\underline{K}(C'_\p\otimes\mathrm{C}(\mathbb T)), \underline{K}(B\otimes M_\p))$ such that
$$\kappa_{\p}|_{{\boldsymbol{\bt}}(K_1(C'_\p))} =-\theta_\p\circ(\iota'_\p)_{*1}\circ {\boldsymbol{\bt}}^{-1}\andeqn
\kappa_{\p}|_{{\boldsymbol{\bt}}(\Kzero(C'_\p))}=-\alpha_\p\circ(\iota'_\p)_{*0}\circ {\boldsymbol{\bt}}^{-1}.
$$
}
{Similarly, there exists
 $\kappa_\q\in\mathrm{Hom}_{\Lambda}(\underline{K}(C'_\q\otimes\mathrm{C}(\mathbb T))), \underline{K}(B\otimes M_\q))$ such that
$$\kappa_{\q}|_{{\boldsymbol{\bt}}(\Kone(C'_\q))} =-\theta_\q\circ(\iota'_\q)_{*1}\circ {\boldsymbol{\bt}}^{-1}\andeqn
\kappa_{\q}|_{{\boldsymbol{\bt}}(\Kzero(C'_\q))}=-\alpha_\q\circ (\iota'_\q)_{*0} \circ {\boldsymbol{\bt}}^{-1}.$$
}

{Note that since $g_{i,\fr}\in {\mathrm ker}\rho_{A\otimes M_\fr},$ $\kappa_\fr({\boldsymbol{\bt}}(\Kone(C_\fr')))\subseteq \ker \rho_{B\otimes M_\fr}$, $\fr=\p$ or $\fr=\q$.} By Theorem \ref{B1B2-alg}, there {exist} unitaries $w_\p\in B\otimes M_\p$ and $w_\q\in B\otimes M_\q$ such that
$$\|[w_\p, {(\phi\otimes\mathrm{id}_{M_\p})}(x)]\|<\delta'_2/8,\quad  \|[w_\q, {(}\phi\otimes\mathrm{id}_{M_\q}{)}(y)]\|<\delta'_2/8,$$
for any $x\in {\cal H}^{0'}\otimes {\cal H}^{\p'}$ and $y\in {\cal H}^{0'}\otimes {\cal H}^{\q'},$  and
$$\mathrm{Bott}(\phi\otimes\mathrm{id}_{M_\p}, w_\p)\circ[\iota'_\p]=\kappa_p\circ\boldsymbol{\beta}\quad\mathrm{and}\quad \mathrm{Bott}(\phi\otimes\mathrm{id}_{M_\q}, w_\q)\circ[\iota'_\q]=\kappa_q\circ\boldsymbol{\beta}.$$

For $\fr=\p$ or $\fr=\q$ and each $1\leq j\leq k$, define
\beq\nonumber
&&{\zeta_{j, w_\fr u_\fr}=}\\\nonumber
&&\overline{\langle (\mathbf 1_n-(\phi\otimes\mathrm{id}_{M_\fr})(p'_{j, \fr})+ ((\phi\otimes\mathrm{id}_{M_\fr})(p'_{j, \fr}))w_\fr u_\fr)(\mathbf 1_n-(\phi\otimes\mathrm{id}_{M_\fr})(q'_{j, \fr})+((\phi\otimes\mathrm{id}_{M_\fr})(q'_{j, \fr}))u^*_\fr w^*_\fr)\rangle}.
\eneq
It is an element in $U(B\otimes M_\fr)/CU(B\otimes M_\fr)$.

{Define the map $\Gamma_\fr: \Int^{k}\to U(B\otimes M_\p)/CU(B\otimes M_\p)$ by
$$\Gamma_\fr(x'_{j, \fr})= \zeta_{j, w_\fr u_\fr},\quad 1\leq j\leq k.$$
}

{Applying Corollary \ref{BB-exi-mat} to $C_\fr$ (in the place of $C$), $G(x'_{1, \fr}, ..., x'_{k, \fr})$ (in the place of $G$), $B\otimes M_\fr$ (in the place of $A$), and $(\phi\otimes\mathrm{id}_{M_\fr})|_{C_\fr}$ (in the place of $\phi$), there is a unitary} {$c_\fr\in B\otimes M_\fr$} such that
$$\|[c_\fr, {(\phi\otimes\mathrm{id}_{M_\fr})}(x)]\|<\delta'_2/16$$
for any $x\in {\cal H}^{0'}\otimes {\cal H}^{\fr'}$,
$$\mathrm{Bott}(\phi\otimes\mathrm{id}_{M_\fr}, c_\fr)|_{\imath_\fr(\mathcal P')}=0,$$
and
\begin{equation}\label{pre-alg-k1-est-01}
\mathrm{dist}(\zeta_{j, c_\fr^*}, \Gamma_\fr(x_{j, \fr}))\leq \gamma/(32(1+\sum_{i, j} \abs{Mr_{ij}})),\quad 1\leq j\leq k,
\end{equation}
where
$$
\zeta_{j, c^*_\fr}=\overline{\langle (\mathbf 1_n-(\phi\otimes\mathrm{id}_{M_\fr})(p'_{j, \fr})+ ((\phi\otimes\mathrm{id}_{M_\fr})(p'_{j, \fr}))c^*_\fr)(\mathbf 1_n-(\phi\otimes\mathrm{id}_{M_\fr})(q'_{j, \fr})+((\phi\otimes\mathrm{id}_{M_\fr})(q'_{j, \fr}))c_\fr)\rangle}.
$$

Put $v_\fr=c_\fr w_\fr u_\fr$. Then, by \eqref{AHtoC-add4} and \eqref{pre-alg-k1-est-01}, for $1\leq j\leq k$,
\begin{equation}\label{alg-k1-est-01}
\mathrm{dist}({\zeta}_{j, v_\fr}, \overline{(1_{B\otimes M_\fr})_n}) < \mathrm{dist}(\zeta_{j, c_\fr^*}, \zeta_{j, w_\fr u_\fr}) +  \gamma/(32(1+\sum_{i, j} \abs{Mr_{ij}}))<  \gamma/(16(1+\sum_{i, j} \abs{Mr_{ij}})),
\end{equation}
where
$${\zeta}_{j, v_\fr}=\overline{\langle (\mathbf 1_n-(\phi\otimes\mathrm{id}_{M_\fr})(p'_{j, \fr})+ ((\phi\otimes\mathrm{id}_{M_\fr})(p'_{j, \fr}))v_\fr)(\mathbf 1_n-(\phi\otimes\mathrm{id}_{M_\fr})(q'_{j, \fr})+((\phi\otimes\mathrm{id}_{M_\fr})(q'_{j, \fr})) v^*_\fr)\rangle}.$$

 Recall that $[x_j']=[p_j']-[q_j']$. Define
$$
{\zeta}_{x'_j, v_\fr}=\overline{\langle (\mathbf 1_n-\phi(p'_j)\otimes 1_{M_\fr}+ (\phi(p'_j)\otimes 1_{M_\fr})v_\fr)(\mathbf 1_n-\phi(q'_j)\otimes 1_{M_\fr}+(\phi(q_j')\otimes 1_{M_\fr})v^*_\fr)\rangle}.$$

 By \eqref{approx-est1} and \eqref{approx-est2}, one has
 $$\mathrm{dist}(\zeta_{x_j', v_\fr}, \zeta_{j, v_r})< \gamma/(16(1+\sum_{i, j'} \abs{Mr_{ij'}})),$$
 and hence by \eqref{alg-k1-est-01},
 $$\mathrm{dist}(\zeta_{x_j', v_\fr}, \overline{(1_{B\otimes M_\fr})_n})< \gamma/(8(1+\sum_{i, j'} \abs{Mr_{ij'}})).$$
 Regard $\zeta_{x_j', v_\fr}$ as its image in $B\otimes Q$, one has
$$\mathrm{dist}(\zeta_{x_j', v_\fr}, \overline{(1_{B\otimes Q})_n})< \gamma/(8(1+\sum_{i, j'} \abs{Mr_{ij'}})),$$
and hence for any $1\leq i\leq m$,
$$\mathrm{dist}(\prod_{j=1}^k (\zeta_{x_j', v_\fr})^{Mr_{ij}}, \overline{(1_{B\otimes Q})_n})< \gamma/8.$$

By \eqref{AHtoC-add3}, one has
$$\mathrm{dist}(\overline{\langle (1-(\phi\otimes\mathrm{id}_{Q})(p_{i})+ (\phi\otimes\mathrm{id}_{Q})(p_{i}) v_\fr ) (1-(\phi\otimes\mathrm{id}_{Q})(q_{i})+ (\phi\otimes\mathrm{id}_{Q})(q_{i}) v^*_\fr ) \rangle^M}, \overline{(1_{B\otimes Q})_n})<\gamma/4,$$
and then, by Theorem 6.10 (and Theorem 6.11) of \cite{Lnuni1},
\begin{eqnarray*}
&&\mathrm{dist}(\overline{\langle (1-(\phi\otimes\mathrm{id}_{Q})(p_{i})+ (\phi\otimes\mathrm{id}_{Q})(p_{i}) v_\fr ) (1-(\phi\otimes\mathrm{id}_{Q})(q_{i})+ (\phi\otimes\mathrm{id}_{Q})(q_{i}) v^*_\fr ) \rangle}, \overline{(1_{B\otimes Q})_n})\\
&<&\gamma/(4M)<\gamma/4.
\end{eqnarray*}

In particular,
\begin{eqnarray*}
&&\mathrm{dist}(\overline{ \langle (1-(\phi\otimes\mathrm{id}_{Q})(p_{i})+ (\phi\otimes\mathrm{id}_{Q})(p_{i}) v_\q v_p^* ) (1-(\phi\otimes\mathrm{id}_{Q})(q_{i})+ (\phi\otimes\mathrm{id}_{Q})(q_{i}) v_pv_q^* ) \rangle }, \overline{(1_{B\otimes Q})_n})\\
&\leq&\mathrm{dist}(\overline{ \langle (1-(\phi\otimes\mathrm{id}_{Q})(p_{i})+ (\phi\otimes\mathrm{id}_{Q})(p_{i}) v_\q ) (1-(\phi\otimes\mathrm{id}_{Q})(q_{i})+ (\phi\otimes\mathrm{id}_{Q})(q_{i}) v^*_{\q} ) \rangle }, \overline{(1_{B\otimes Q})_n})\\
&&+\mathrm{dist}(\overline{ \langle (1-(\phi\otimes\mathrm{id}_{Q})(p_{i})+ (\phi\otimes\mathrm{id}_{Q})(p_{i}) v_\p  ) (1-(\phi\otimes\mathrm{id}_{Q})(q_{i})+ (\phi\otimes\mathrm{id}_{Q})(q_{i}) v_\p^* ) \rangle }, \overline{(1_{B\otimes Q})_n})\\
& <&\gamma/2.
\end{eqnarray*}

That is
\begin{equation}\label{bu-small-mid}
\mathrm{dist}({\zeta}_{i, v_\q v_\p^*}, \overline{1_n})<\gamma/2,
\end{equation}
where
${\zeta}_{i, v_\q v_\p^*}=\overline{ \langle (1-(\phi\otimes\mathrm{id}_{Q})(p_{i})+ (\phi\otimes\mathrm{id}_{Q})(p_{i}) v_\q v_p^* ) (1-(\phi\otimes\mathrm{id}_{Q})(q_{i})+ (\phi\otimes\mathrm{id}_{Q})(q_{i}) v_pv_q^* ) \rangle }.$

Moreover, one also has
$$\norm{\psi\otimes \mathrm{id}_Q(x)-v^*_\p(\phi\otimes {\mathrm{id}}_Q(x))v_\p}<\delta'_2/4,\quad\forall x\in {\cal H}^{0'}\otimes {\cal H}^{p'} \otimes {\cal H}^{\q'}{\andeqn}$$
$$\norm{\psi\otimes \mathrm{id}_Q(x)-v_\q^*(\phi\otimes {\mathrm{id}}_Q)(x)v_\q}<\delta'_2/4,\quad\forall x\in {\cal H}^{0'}\otimes {\cal H}^{p'}\otimes {\cal H}^{q'}.$$
Hence
$$\norm{[v_\p v^*_\q, \phi(x)\otimes 1_Q]}<\delta'_2/2<\delta_2,\quad \forall x\in {\cal H}'.$$
{Thus} $\mathrm{Bott}(\phi\otimes {\mathrm{id}}_Q, v_\p v^*_\q)$ is well defined on the subgroup generated by $\mathcal P$. Moreover, a direct calculation shows that
\begin{eqnarray*}
&&\mathrm{bott}_1(\phi\otimes {\mathrm{id}}_Q, v_\p v^*_\q)\circ (\imath_{\infty})_{*1}(z)\\
&=&\mathrm{bott}_1(\phi\otimes {\mathrm{id}}_Q, c_\p)\circ (\imath_\infty)_{*1}(z)+\mathrm{bott}_1(\phi\otimes {\mathrm{id}}_Q, w_\p)\circ (\imath_\infty)_{*1}(z)\\
&&+\mathrm{bott}(\phi\otimes \mathrm{id}_Q, u_\p u^*_\q))\circ (\imath_\infty)_{*1}(z) +\mathrm{bott}_1(\phi\otimes  \mathrm{id}_Q, w^*_\q)\circ (\imath_\infty)_{*1}(z)\\
&&+\mathrm{bott}_1(\phi\otimes {\mathrm{id}}_Q, c^*_\q)\circ (\imath_\infty)_{*1}(z)\\
&=&{(j_\p)_{*0}\circ \mathrm{bott}_1(\phi\otimes {\mathrm{id}}_{M_\p}, c_\p)\circ (\imath_\p)_{*1}(z)+ (j_\p)_{*0}\circ \mathrm{bott}_1(\phi\otimes {\mathrm{id}}_{M_\p}, w_\p)\circ (\imath_\p)_{*1}(z)}\\
&&+\mathrm{bott}(\phi\otimes {\mathrm{id}}_{Q}, u_\p u^*_\q)\circ (\imath_\infty)_{*1}(z)+{(j_\q)_{*0}\circ} \mathrm{bott}_1(\phi\otimes {\mathrm{id}}_{M_\q}, w^*_\q)\circ (\imath_\q)_{*1}(z)\\
 &&+{(j_q)_{*0}\circ }\mathrm{bott}_1(\phi\otimes {\mathrm{id}}_{M_\q}, c^*_\q)\circ (\imath_\q)_{*1}(z)\\
 &=& {(j_\p)_{*0}\circ }\mathrm{bott}_1(\phi\otimes {\mathrm{id}}_{M_\p}, w_\p)\circ (\imath_\p)_{*1}(z)+\mathrm{bott}(\phi\otimes {\mathrm{id}}_{Q}, u_\p u^*_\q)\circ (\imath_\infty)_{*1}(z)\\
 &&+{(j_\q)_{*0}\circ }\mathrm{bott}_1(\phi\otimes {\mathrm{id}}_{M_\q}, w^*_\q)\circ (\imath_\q)_{*1}(z)\\
&=&-(j_\p)_{*0}\circ \theta_\p\circ (\imath_\p)_{*1}(z)+((j_\p)_{*0}\circ \theta_\p\circ (\imath_p)_{*1}+(j_\q)_{*0}\circ \theta_\q\circ (\imath_q)_{*1})-(j_\q)_{*0}\circ \theta_\q\circ (\imath_q)_{*1}(z)\\
&=&0\,\,\,\,\,\,\tforal z\in G({\cal P})_{1,0}.
\end{eqnarray*}
The same argument shows that
$\mathrm{bott}_0(\phi\otimes \mathrm{id}_Q, v_\p v_\q^*)=0$
on $G({\cal P})_{0, 0}.$
Now, for any $g\in G({\cal P})_{1, \infty,0},$ there is $z\in G({\cal P})_{1,0}$ and integers $k,m$ such that $(k/m)z=g.$
From the above,
\beq
\mathrm{bott}_1(\phi\otimes {\mathrm{id}}_Q,v_\p v^*_\q)(mg)
&=&k\mathrm{bott}_1(\phi\otimes {\mathrm{id}}_Q,v_\p v^*_\q)(z)=0.
\eneq
Since $K_0(B\otimes Q)$ is torsion free, it follows that
$$
\mathrm{bott}_1(\phi\otimes {\mathrm{id}}_Q,v_\p v^*_\q)(g)=0
$$
for all $g\in G({\cal P})_{1,\infty,0}.$ So it vanishes on ${\cal P}\cap K_1(A\otimes Q).$
Similarly,
$$
\mathrm{bott}_0(\phi\otimes {\mathrm{id}}_Q,v_\p v^*_\q){|_{{\cal P}\cap K_0(A\otimes Q)}}=0
$$
on ${\cal P}\cap K_0(A\otimes Q).$

Since $K_i(B\otimes Q, \Z/m\Z)=\{0\}$ for all $m\ge 2,$ we conclude that
$$\mathrm{Bott}(\phi\otimes {\mathrm{id}}_Q, v_\p v^*_\q){|_{\cal P}}=0$$
on the subgroup generated by $\mathcal P$.

Since
$
[\phi]=[\psi]\,\,\,\,{\textrm{in}}\,\,\, KL(A, B),\,\,\,
{\phi_{\sharp}=\psi_{\sharp}\andeqn \phi^{\ddag}=\psi^{\ddag},}
$
one has that
\beq
[\phi\otimes \mathrm{id}_{Q}]=[\psi\otimes \mathrm{id}_{Q}]\,\,\,\,{\text{in}}\,\,\, KL(A\otimes Q, B\otimes Q),\\
(\phi\otimes \mathrm{id}_{Q})_{\sharp}=(\psi\otimes \mathrm{id}_{Q})_{\sharp}\andeqn (\phi\otimes \mathrm{id}_{Q})^{\ddag}=(\psi\otimes \mathrm{id}_{Q})^{\ddag}.
\eneq

Therefore,  {by {5.10 of \cite{Lin-AU11}}, $\phi\otimes \mathrm{id}_Q$ and
$\psi\otimes\mathrm{id}_Q$ are approximately unitarily equivalent.
Thus there exists a unitary $u\in B\otimes Q$ such that
\beq\label{L3-02}
\|u^*(\phi\otimes\mathrm{id}_Q)(c)u-(\psi\otimes\mathrm{id}_Q)(c)\|
<\delta_2'/8 \tforal c\in \mathcal E\cup \mathcal H'.
\eneq
}
{It follows that
$$
\norm{uv_\p^*(\phi(c)\otimes 1_Q)v_\p u^*-\psi(c)\otimes 1_Q}<\delta_2'/2+\delta_2'/8\quad\forall c\in\mathcal G'.
$$
}
By the choice of $\dt_2'$ and ${\cal H}',$
 $\mathrm{Bott}(\phi\otimes \mathrm{id}_Q, v_\p u^*)$ is
 well defined on {$[\iota](\underline{K}(C'))$},
 and
$$|\tau(\mathrm{bott}_1(\phi\otimes \mathrm{id}_Q, v_\p u^*)(z))|<\delta_2/2,\quad \forall \tau\in\mathrm{T}(B), \forall z\in{\mathcal G}
.$$
By Theorem \ref{B1B2-alg}, there exists a unitary $y_\p\in B\otimes Q$ such that
$$\|[y_\p, (\phi\otimes\mathrm{id}_Q)(h)]\|<\delta/2,\quad\forall h\in \mathcal H,$$
and $\mathrm{Bott}(\phi\otimes\mathrm{id}_{Q}, y_\p)=\mathrm{Bott}(\phi\otimes\mathrm{id}_Q, v_\p u^*)$ on the subgroup generated by $\mathcal P$.

For each $1\leq i\leq m$, define
\beq\nonumber
&&{\zeta}_{i, y_\p u v^*_\p}\\\nonumber
&&=\overline{\langle (\mathbf 1_n-(\phi\otimes {\mathrm{id}}_Q)(p_i)+((\phi\otimes {\mathrm{id}}_Q)(p_i)) y_\p u v^*_\p)(\mathbf 1_n-(\phi\otimes {\mathrm{id}}_Q)(q_i)+((\phi\otimes {\mathrm{id}}_Q)(q_i))v_\p u^* y^*_\p) \rangle},
\eneq
and define the map $\Gamma: \Int^m\to U(B\otimes Q)/CU(B\otimes Q)$ by $\Gamma(x_i)={\zeta}_{i, y_\p uv^*_\q}$.

Applying Corollary \ref{BB-exi-mat} to $C$ and $G(\mathcal Q)$, there is a unitary $c\in B\otimes Q$ such that $$\norm{[c, (\phi\otimes \mathrm{id}_Q)(h)]}<\delta/4,\quad\forall h\in\mathcal H$$
$$\mathrm{Bott}(\phi\otimes \mathrm{id}_Q, c)|_{\mathcal P}=0$$ and for any $1\leq i\leq k$,
$$\mathrm{dist} (\zeta'_{i, c^*}, \Gamma(x_i))\leq\gamma/2 ,$$
where
$$\zeta'_{i, c^*}=\overline{\langle (\mathbf 1_n-(\phi\otimes {\mathrm{id}}_Q)(p_i)+(\phi\otimes {\mathrm{id}}_Q)(p_i)c^*)(\mathbf 1_n-(\phi\otimes {\mathrm{id}}_Q)(q_i)+(\phi\otimes {\mathrm{id}}_Q)(q_i)c) \rangle}.$$

Consider the unitary $v=cy_\p u$, one has that
$$\|[v, (\phi\otimes\mathrm{id}_Q)(h)]\|<\delta, {\rforal} h\in \mathcal H {\andeqn}
\mathrm{Bott}(\phi\otimes\mathrm{id}_{Q}, vv^*_\p)=0$$
on the subgroup generated by $\mathcal P$, and for any $1\leq i\leq m$,

\begin{equation}\label{bu-small-end}
\mathrm{dist}(\zeta_{i, vv_\p^*}, \overline{1_n})<\gamma/2,
\end{equation}
where
$${\zeta}_{i, vv_\p^*}=\overline{\langle(\mathbf 1_n-(\phi\otimes {\mathrm{id}}_Q)(p_i)+((\phi\otimes {\mathrm{id}}_Q)(p_i))vv^*_\p)(\mathbf 1_n-(\phi\otimes {\mathrm{id}}_Q)(q_i)+((\phi\otimes {\mathrm{id}}_Q)(q_i))v_\p v^*)\rangle}.$$

{By the construction of $\Delta$, it is clear that $$\mu_{\tau\circ(\psi\otimes 1)}(O_a)\geq\Delta(a)$$ for all $a$, where $O_a$ is any open ball of $X$ with radius $a$; in particular, it holds for all $a\geq d$.
 Applying Theorem \ref{hp-mat} to $C$ and
 $(\phi\otimes\mathrm{id}_Q)|_C$},
 one obtains a continuous path of unitaries $v(t)$ in $B\otimes Q$
such that
$v(0)=1$ and $v(t_1)=vv^*_\p $, and
\begin{equation}\label{com-p}
\norm{[z_p(t), (\phi\otimes \mathrm{id}_Q)(c)]}<\eps/2\quad\forall x\in\mathcal E,\ {\forall} t\in[0, t_1].
\end{equation}

Note that
\begin{eqnarray}\label{zero-bott}
\mathrm{Bott}(\phi\otimes\mathrm{id}_Q, v_\q v^*)&=&\mathrm{Bott}(\phi\otimes\mathrm{id}_Q, v_\q v_\p^* v_\p v^*)\\
&=&\mathrm{Bott}(\phi\otimes\mathrm{id}_Q, v_\q v_\p^*)+\mathrm{Bott}(\phi\otimes\mathrm{id}_Q, v_\p v^*)\\
&=&0+0=0
\end{eqnarray}
on the subgroup generated by $\mathcal P$, and for any  $1\leq i\leq m$,
\begin{eqnarray}
&&\mathrm{dist}(\zeta_{i, v_\q v^*}, \overline{1})\\
 &\leq& \mathrm{dist}(\zeta_{i, v_\q v_\p^*}, \overline{1})+\mathrm{dist}(\zeta_{i, v_\p v^*}, \overline{1})\\
 &=&\gamma,\quad \textrm{(by \eqref{bu-small-mid} and \eqref{bu-small-end})}
\end{eqnarray}
where
$$\zeta_{i, v_qv^*}=\overline{ \langle (1-(\phi\otimes\mathrm{id}_{Q})(p_{i})+ (\phi\otimes\mathrm{id}_{Q})(p_{i}) v_\q v^* ) (1-(\phi\otimes\mathrm{id}_{Q})(q_{i})+ (\phi\otimes\mathrm{id}_{Q})(q_{i}) vv_\q^* ) \rangle }$$

Since $$\|[vv_\q^*, (\phi\otimes\mathrm{id}_Q)(c)]\|<\delta,\quad\forall c\in \mathcal H,$$
 Theorem \ref{hp-mat} implies that there is a path of unitaries $z_\q(t): [t_{m-1}, 1]\to\mathrm{U}(A\otimes Q)$ such that $z_\q(t_{m-1})=vv^*_\q$, $z_\q(1)=1$ and
\begin{equation}\label{com-q}
\|[ z_\q(t), \phi\otimes\mathrm{id}_Q(c)]\|<\eps/8,\quad\forall t\in[t_{m-1}, 1],\ \forall c\in \mathcal E.
\end{equation}

Consider the unitary
\begin{displaymath}
v(t)=\left\{
\begin{array}{ll}
z_\p(t)v_\p, &\textrm{if}\ 0\leq t\leq t_1,\\
v, &\textrm{if}\ t_1\leq t\leq t_{m-1},\\
z_\q(t)v_\q, &\textrm{if}\ t_{m-1}\leq t\leq t_m.
\end{array}
\right.
\end{displaymath}

Then, for any $t_i$, $0\leq i\leq m$, one has that
\begin{equation}\label{eval}
\|v^*(t_i)(\phi\otimes\mathrm{id}_Q)(c) v(t_i)- (\psi\otimes\mathrm{id}_Q)(c)\|<\ep/2,\quad\forall c\in\mathcal E.
\end{equation}

Then for any $t\in[t_j, t_{j+1}]$ with $1\leq j\leq m-2$, one has
\beq\label{middle}
&&\|v^*(t)(\phi\otimes {\mathrm{id}}(a\otimes b(t)))v(t)-\psi\otimes{\mathrm{id}}(a\otimes b(t))\|\\
&=&\|v^*(\phi(a)\otimes b(t))v-\psi(a)\otimes b(t)\|\\
&<&\|v^*(\phi(a)\otimes b(t_j))v-\psi(a)\otimes b(t_j)\|+\ep/4\\
&<&\ep/4+\ep/4<\ep/2.
\eneq

For any $t\in[0, t_1]$, one has that for any $a\in\mathcal F_1$ and $b\in\mathcal F_2$,
\beq\label{left}
&&\|v^*(t)(\phi\otimes {\mathrm{id}}(a\otimes b(t)))v(t)-\psi\otimes{\mathrm{id}}(a\otimes b(t))\|\\
&=&\|v^*_\p z_\p^*(t)(\phi(a)\otimes b(t))z_\p(t) v_\p-\psi(a)\otimes b(t)\|\\
&<&\|v^*_\p z_\p^*(t)(\phi(a)\otimes b(t_0))z_\p(t) v_\p-\psi(a)\otimes b(t_0)\|+\ep/2\\
&<&\|v^*_\p(\phi(a)\otimes b(t_0))v_\p-\psi(a)\otimes b(t_0)\|+3\ep/4\\
&<&3\ep/4+\ep/4=\ep.
\eneq

The same argument shows that for any $t\in[t_{m-1}, 1]$, one has that for any $a\in\mathcal F_1$ and $b\in\mathcal F_2$,
\beq\label{right}
\|v^*(t)(\phi\otimes {\mathrm{id}}(a\otimes b(t)))v(t)-\psi\otimes{\mathrm{id}}(a\otimes b(t))\|<\ep.
\eneq

Therefore, one has
$$\|v(\phi\otimes {\mathrm{id}}(f))v-\psi\otimes{\mathrm{id}}(f)\|<\ep\rforal f\in {\cal F}.$$
\end{proof}

{
\begin{rem}
In fact, using the same argument as the lemma above, one has the following: Let $A$ and $B$ be two unital stably finite C*-algebras. 
Assume that,  for any UHF-algebra $U$ of infinite type,
\begin{enumerate}
\item the approximately unitarily equivalence classes of the monomorphisms from $A\otimes U$ to $B\otimes U$ is classified by the induced elements in $KL(A\otimes U, B\otimes U)$, {the} induced maps on traces, together with the induced maps from $U_\infty(A\otimes U)/CU_\infty(A\otimes U)$ to $U_\infty(B\otimes U)/CU_\infty(B\otimes U)$,
\item $B\otimes U$ satisfies Theorem \ref{B1B2-alg} with respect to any embedding of $A\otimes U$,
\item $B\otimes U$ satisfies a homotopy lemma, such as Theorem \ref{hp-mat} or Lemma 8.4 of \cite{Lin-hmtp},  for any embedding of $A\otimes U$ to $B\otimes U$,
\end{enumerate}
then, for any monomorphisms $\phi, \psi: A\to B$, the maps $\phi\otimes\mathrm{id}$ and $\psi\otimes\mathrm{id}$ from $A\otimes\mathcal Z_{\p, \q}$ to $B\otimes\mathcal Z_{\p, \q}$ are approximately unitarily equivalent if and only if
\beq\label{rem3-1}
[\phi]=[\psi]\,\,\,\,{\text{in}}\,\,\, KL(A, B),\
\phi_{\sharp}=\psi_{\sharp}\andeqn \phi^{\ddag}=\psi^{\ddag}.
\eneq
\end{rem}
}

%

\begin{thm}\label{uniq}
Let $A$ be a $\mathcal Z$-stable C*-algebra such that
$A\otimes M_\fr$ is an AH-algebra for
any supernatural number $\fr$ of infinite type,
and let ${B}\in {\cal C}$ be a {unital}  separable $\mathcal Z$-stable C*-algebras. If $\phi$ and $\psi$ are two monomorphisms from $A$ to $B$ with
\beq\label{Thm-2}
[\phi]=[\psi]\,\,\,\,{\text in}\,\,\, KL(A, B),\,\,\,
{\phi}_{\sharp}={\psi}_{\sharp}\andeqn {\phi}^{\ddag}={\psi}^{\ddag}{,}
\eneq
{t}hen, for any $\ep>0$ and any finite subset ${\cal F}\subseteq A,$ there exists a unitary
$u\in B$  such that
\beq\label{T2-1}
\|{u}^*\phi(a) {u}-\psi(a)\|<\ep\rforal a\in {\cal F}.
\eneq
\end{thm}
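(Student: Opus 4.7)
The plan is to deduce this theorem from Lemma \ref{N2N} by exploiting the $\mathcal{Z}$-stability of $B$, in exactly the same way that Corollary \ref{C1} was deduced from Theorem \ref{AHtoC}. All of the hard technical work (rotation-map book-keeping, Bott-map adjustment between $B\otimes M_\p$ and $B\otimes M_\q$, matching traces via the Exel formula, and the application of the homotopy lemma of \cite{Lin-hmtp}) has already been done in the proof of Lemma \ref{N2N}; what remains is purely a $\mathcal{Z}$-stability manipulation.

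First I would apply Lemma \ref{N2N}. The hypothesis (\ref{Thm-2}) is precisely (\ref{L1-1}), so for any $\ep>0$ and any finite subset $\mathcal F\subset A$, viewing $\mathcal F\otimes 1\subset A\otimes \mathcal{Z}_{\p,\q}$, Lemma \ref{N2N} produces a unitary $v\in B\otimes \mathcal{Z}_{\p,\q}$ with
$$\|v^{*}(\phi(a)\otimes 1)\,v-\psi(a)\otimes 1\|<\ep/3\tforal a\in\mathcal F.$$
Next, by the stationary inductive limit realization of $\mathcal{Z}$ recalled in \ref{zpq}, fix a unital embedding $\iota\colon \mathcal{Z}_{\p,\q}\to \mathcal{Z}$ and push $v$ forward through $\mathrm{id}_B\otimes\iota\colon B\otimes \mathcal{Z}_{\p,\q}\to B\otimes \mathcal{Z}$ to obtain a unitary $v_1=(\mathrm{id}_B\otimes\iota)(v)\in B\otimes \mathcal{Z}$ satisfying the same intertwining estimate on $\phi(a)\otimes 1$ and $\psi(a)\otimes 1$ for $a\in\mathcal F$.

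Now I would invoke the $\mathcal{Z}$-stability of $B$ exactly as in the proof of Corollary \ref{C1}: fix an isomorphism $j\colon B\otimes \mathcal{Z}\to B$ such that $j\circ\imath_B$ is approximately inner, where $\imath_B(b)=b\otimes 1$. Choose a unitary $w\in B$ with
$$\|j(c\otimes 1)-w^{*}cw\|<\ep/3\quad\text{for all}\ c\in\phi(\mathcal F)\cup\psi(\mathcal F).$$
Setting $u=j(v_1)\in B$ (a unitary) and $V=wuw^{*}$, the triangle-inequality estimate
$$\|V^{*}\phi(a)V-\psi(a)\|\le \|w^{*}\phi(a)w-j(\phi(a)\otimes 1)\|+\|u^{*}j(\phi(a)\otimes 1)u-j(\psi(a)\otimes 1)\|+\|j(\psi(a)\otimes 1)-w^{*}\psi(a)w\|$$
is bounded by $\ep/3+\ep/3+\ep/3=\ep$ for every $a\in\mathcal F$, which is the desired conclusion.

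In short, the theorem is the final assembly step: Lemma \ref{N2N} supplies approximate unitary equivalence after tensoring with $\mathcal{Z}_{\p,\q}$, the embedding $\mathcal{Z}_{\p,\q}\hookrightarrow\mathcal{Z}$ from \ref{zpq} promotes this to approximate unitary equivalence after tensoring with $\mathcal{Z}$, and the $\mathcal{Z}$-stability of $B$ together with the approximate innerness of $j\circ\imath_B$ transfers it back to $B$. The only genuine obstacle was already confronted inside Lemma \ref{N2N}; no further technical ingredients are needed here.
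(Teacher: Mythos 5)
Your argument is correct and follows essentially the same route as the paper: apply Lemma~\ref{N2N}, then use the $\mathcal Z$-stability of $B$ (via an isomorphism $j\colon B\otimes\mathcal Z\to B$ for which $j\circ\imath_B$ is approximately inner) to pull the intertwining unitary back into $B$. The paper phrases the $\mathcal Z$-stability step a bit more abstractly---conjugating by isomorphisms $\Gamma_A$, $\Gamma_B$ to reduce first to the statement that $\phi\otimes\mathrm{id}_{\mathcal Z}$ and $\psi\otimes\mathrm{id}_{\mathcal Z}$ are approximately unitarily equivalent---while you transfer the unitary concretely in the style of the proof of Corollary~\ref{C1} (and in doing so invoke $\mathcal Z$-stability of $B$ only), but the substance is the same.
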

\begin{proof}
Let $\alpha: A\to A\otimes \mathcal Z$ {and} $\beta: \mathcal Z\to\mathcal Z\otimes\mathcal Z$ be {isomorphisms}. Consider the map \begin{displaymath} \Gamma_A:
\xymatrix{
A\ar[r]^-{\alpha}& A\otimes\mathcal Z\ar[r]^-{\mathrm{id}\otimes \beta} & A\otimes\mathcal{Z}\otimes\mathcal Z\ar[r]^-{\alpha^{-1}\otimes\mathrm{id}}&A\otimes\mathcal Z
}.
\end{displaymath}
Then $\Gamma$ is an isomorphism. However, since $\beta$ is approximately unitarily equivalent to the map
$$\mathcal Z\ni a\mapsto a\otimes 1\in\mathcal Z\otimes\mathcal Z,$$
{the} map $\Gamma_A$ is approximately unitarily equivalent to the map
$$A\ni a\mapsto a\otimes 1\in A\otimes\mathcal Z.$$
Hence the map $\Gamma_B\circ\phi\circ\Gamma_A$ is approximately unitarily equivalent to ${\phi\otimes \mathrm{id}_{\cal Z}}.$ The same argument shows that $\Gamma_B\circ\psi\circ\Gamma_A$ is approximately unitarily equivalent to {$\psi\otimes{\mathrm{id}}_{\cal Z}.$} Thus, in order to prove the theorem, it is enough to show that $\phi\otimes\mathrm{id}_{{{\cal Z}}}$ is approximately unitarily equivalent to $\psi\otimes\mathrm{id}_{{\cal Z}}$.

Since $\mathcal Z$ is an inductive limit of C*-algebras $\mathcal Z_{\mathfrak{p}, \mathfrak{q}}$, it is enough to show that $\phi\otimes\mathrm{id}_{\mathcal Z_{\mathfrak{p}, \mathfrak{q}}}$ is approximately unitarily equivalent to $\psi\otimes\mathrm{id}_{\mathcal Z_{\mathfrak{p}, \mathfrak{q}}}$, and this follows from Lemma \ref{N2N}.
\end{proof}

%

\section{The range of approximate equivalence classes of
\hm s}

Now let $A$ and $B$ be two unital  \CA s in ${\cal N}\cap {\cal C}.$  Theorem \ref{uniq} states
 that  two unital monomorphisms are approximately unitarily equivalent if they induce the same
 element in $KLT_e(A, B)^{++}$ and the same map on $U(A)/CU(A).$ {In t}his section, {we} will discuss the following problem:
 Suppose that {one has} $\kappa\in KLT_e(A,B)^{++}$ and a continuous \hm\, $\gamma: U(A)/CU(A)\to U(B)/CU(B)$
 which is compatible with $\kappa$. Is there always a unital monomorphism
 $\phi: A\to B$ such that $\phi$ induces $\kappa$ and $\phi^{\ddag}=\gamma?$  At least in the case
 that $K_1(A)$ is free, Theorem \ref{Ext1} states that such $\phi$ always exists.

\begin{lem}\label{2L1}
Let $A$ and $B$ be two  unital infinite dimensional separable  stably finite \CA s
whose tracial {simplexes} are non-empty.
 Let $\gamma: {U_{\infty}(A)}/CU_{\infty}(A)\to U_{\infty}(B)/CU_{\infty}(B)$
be a continuous \hm, $h_i: K_i(A)\to K_i(B)$ ($i=0, 1$) be  homomorphisms for which
$h_0$ is positive,  and
{let} ${\lambda}: \mathrm{Aff}(\tr(A))\to \mathrm{Aff}(\tr(B))$ be an affine map {so that
$(h_0, h_1, {\lambda}, {\gamma})$}  are compatible. Let $\mathfrak{p}$ be a supernatural number.
Then ${\gamma}$ induces a unique \hm\, ${\gamma}_\p: U_{\infty}(A_\p)/CU_{\infty}(A_\p)\to
U_{\infty}(B_\p)/CU_{\infty}(B_\p)$ which is
compatible with
$(h_\p)_i$ ($i=0,1$) and $\gamma_\p$, {where $A_\p=A\otimes M_{\mathfrak{p}}$ and $B_\p=B\otimes M_{\mathfrak{p}}$, and $(h_\p)_i: K_i(A)\otimes {\Q}_\p\to  K_i(B)\otimes {\Q}_\p$ is induced
by $h_i$ ($i=0,1$). M}oreover, the diagram
$$
\begin{array}{ccc}
U_{\infty}(A)/CU_{\infty}(A) & \stackrel{{\gamma}}{\to} & U_{\infty}(B)/CU_{\infty}(B)\\
\downarrow_{{\imath}_\p^{\ddag}} & & \downarrow_{({\imath}'_\p)^{\ddag}}\\
U_{\infty}(A_\p)/CU(A_\p)& \stackrel{{\gamma}_\p}{\to} & U_{\infty}(B_\p)/CU_{\infty}(B_\p)
\end{array}
$$
commutes, where ${\imath}_{\p}: A\to A_\p$ and ${\imath}'_{\p}: B\to B_\p$ are the maps induced by $a\mapsto a\otimes 1$ and $b\mapsto b\otimes 1$, respectively.

\end{lem}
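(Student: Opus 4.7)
The plan is to reassemble $\gamma_\p$ from the tensored data using the splittings (\ref{Texact}) for $A_\p$ and $B_\p$. Since $M_\p$ has a unique tracial state $\tau_{M_\p}$, the map $\tau\mapsto\tau\otimes\tau_{M_\p}$ identifies $\tr(A_\p)$ with $\tr(A)$ and hence $\aff(\tr(A_\p))$ with $\aff(\tr(A))$, and likewise for $B$. Under these identifications $\lambda$ itself is unchanged, but $\rho_{A_\p}(K_0(A_\p))$ coincides with $\rho_A(K_0(A))\cdot\Q_\p$ (and analogously for $B$). Compatibility of $\lambda$ with $h_0$ forces the $\Q_\p$-linear extension of $\lambda$ to send $\rho_{A_\p}(K_0(A_\p))$ into $\rho_{B_\p}(K_0(B_\p))$, so $\lambda$ descends to a continuous affine map $\overline{\lambda}_\p\colon\aff(\tr(A))/\overline{\rho_{A_\p}(K_0(A_\p))}\to\aff(\tr(B))/\overline{\rho_{B_\p}(K_0(B_\p))}$.

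Fix splittings $s_1^{A_\p}$ and $s_1^{B_\p}$ given by Thomsen's theorem and define $\gamma_\p$ piece by piece: on the identity component $U_\infty(A_\p)_0/CU_\infty(A_\p)$, let $\gamma_\p$ be the map corresponding to $\overline{\lambda}_\p$ via the isomorphism $\overline{\mathrm{Det}}_{A_\p}$ (and $\overline{\mathrm{Det}}_{B_\p}^{-1}$); on the splitting image set $\gamma_\p\circ s_1^{A_\p}=s_1^{B_\p}\circ(h_\p)_1$, where $(h_\p)_1=h_1\otimes\mathrm{id}_{\Q_\p}$. By construction $\gamma_\p$ is compatible with $(h_\p)_0,(h_\p)_1$ and the tensor extension of $\lambda$, and uniqueness is then forced: the determinant identity pins down $\gamma_\p$ on the identity component (since $\overline{\mathrm{Det}}_{A_\p}$ is an isomorphism there), the splitting identity pins it down on $s_1^{A_\p}(K_1(A_\p))$, and $U_\infty(A_\p)/CU_\infty(A_\p)$ is generated by these two subsets.

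The main obstacle is verifying that this $\gamma_\p$ actually extends $\gamma$, i.e., that the displayed square commutes. On the $\aff$-summand the check is routine: both composites send $f+\overline{\rho_A(K_0(A))}$ to $\lambda(f)+\overline{\rho_{B_\p}(K_0(B_\p))}$, because enlarging the quotient commutes with $\overline{\lambda}$. On the $K_1$-summand the subtlety is that $\imath_\p^{\ddag}(s_1^A(x))$ need not equal $s_1^{A_\p}(x\otimes 1)$; they are two lifts of $x\otimes 1\in K_1(A_\p)$ and so differ by an element $z$ of $U_\infty(A_\p)_0/CU_\infty(A_\p)$. I would compute $\gamma_\p(z)$ through $\overline{\mathrm{Det}}_{A_\p}$ and show it matches the corresponding discrepancy $w$ on the $B$-side, using the original determinant identity $\overline{\mathrm{Det}}_B\circ\gamma=\overline{\lambda}\circ\overline{\mathrm{Det}}_A$ together with the naturality $\overline{\mathrm{Det}}_{A_\p}\circ\imath_\p^{\ddag}=(\text{quotient})\circ\overline{\mathrm{Det}}_A$. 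Combined with the original $\gamma\circ s_1^A=s_1^B\circ h_1$, this closes the commutativity on the $K_1$-summand and completes the proof.
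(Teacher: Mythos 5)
Your approach is genuinely different from the paper's, and the gap in it is precisely the step you flag but don't close. The paper does not construct $\gamma_\p$ from splittings at all: it computes $\ker(\imath_\p^{\ddag})$ and $\ker((\imath_\p')^{\ddag})$ explicitly and shows $\gamma\bigl(\ker(\imath_\p^{\ddag})\bigr)\subset\ker\bigl((\imath_\p')^{\ddag}\bigr)$, which makes $\gamma_\p$ well defined on the image of $\imath_\p^{\ddag}$ and tautologically makes the square commute; uniqueness on all of $U_\infty(A_\p)/CU_\infty(A_\p)$ then follows from compatibility. The key observation there is a torsion argument: $\ker(\imath_\p)_{*1}$ consists of $\p$-torsion elements, so if $py=0$ then $p\,\gamma(s_1(y))=0$, and since $\gamma(s_1(y))$ is forced into the identity component (which is a real vector space quotient, hence torsion-free), it must vanish. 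Your proof never makes this observation, and you need it.

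Concretely, in your final paragraph you propose to handle the discrepancy $z=\imath_\p^{\ddag}(s_1^A(x))-s_1^{A_\p}(x\otimes 1)$ by "computing $\gamma_\p(z)$ through $\overline{\mathrm{Det}}_{A_\p}$" and invoking the determinant identity $\overline{\mathrm{Det}}_B\circ\gamma=\overline{\lambda}\circ\overline{\mathrm{Det}}_A$ together with the naturality $\overline{\mathrm{Det}}_{A_\p}\circ\imath_\p^{\ddag}=(\text{quotient})\circ\overline{\mathrm{Det}}_A$. But both of these identities apply only on the identity components $U_\infty(A)_0/CU_\infty(A)$, while $s_1^A(x)$ lies in the complementary splitting image. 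The element $z$ itself \emph{is} in $U_\infty(A_\p)_0/CU_\infty(A_\p)$, but it is the difference of two things that individually are not, and neither summand is accessible to $\overline{\mathrm{Det}}_A$ or to naturality. With arbitrary choices of $s_1^{A_\p}$ and $s_1^{B_\p}$ there is no reason for $\overline{\lambda}_\p\bigl(\overline{\mathrm{Det}}_{A_\p}(z)\bigr)$ to equal $\overline{\mathrm{Det}}_{B_\p}(w')$, so your check, as sketched, does not close. What rescues it is exactly the torsion argument: it shows $\imath_\p^{\ddag}\circ s_1^A$ kills $\ker(\imath_\p)_{*1}$, hence descends to the image of $K_1(A)$ in $K_1(A_\p)$, and since the identity component is divisible (injective as a $\Z$-module) one can extend this to a splitting $s_1^{A_\p}$ for which $z=0$; doing the same on the $B$-side makes the $K_1$ part of the square commute by construction. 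You should either carry out that argument and then be free to pick compatible splittings, or abandon the constructive route and argue, as the paper does, that $\gamma$ factors through the relevant quotient because of the kernel containment.
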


\begin{proof}
Denote by $A_0=A,$ $A_\p=A\otimes M_{\mathfrak{p}}$, $B_0=B$ and
$B_\p=B\otimes M_{\mathfrak{p}}.$
By a result of K. Thomsen (\cite{Thomsen-rims}), using the de la Harpe and Skandalis determinant, one has the following
short exact sequences:
$$
0\to \mathrm{Aff}(\tr(A_i))/\overline{\rho_A(K_0(A_i))}\to U_{\infty}(A_i)/CU_{\infty}(A_i)\to K_1(A_i)\to 0,\,\,\,i=0, \p,
$$
and
$$
0\to \mathrm{Aff}(\tr(B_i))/\overline{\rho_A(K_0(B_i))}\to U_{\infty}(B_i)/CU_{\infty}(B_i)\to K_1(B_i)\to 0,\,\,\,i=0, \p.
$$
Note that, in all these cases,  $\mathrm{Aff}(\tr(A_i))/\overline{\rho_A(K_0(A_i))}$ and
$\mathrm{Aff}(\tr(B_i))/\overline{\rho_A(K_0(B_i))}$ are divisible groups, $i=0, \p$.  Therefore
the exact sequences above splits.
Fix splitting maps $s_i: K_1(A_i)\to U_{\infty}(A)/CU_{\infty}(A_i)$ and
$s'_i: K_1(B_i)\to U_{\infty}(B_i)/CU_{\infty}(B_i),$ $i=0,\p$, for the above
two splitting short exact sequences.
Let ${\imath}_\p: A\to A_\p$ be the homomorphism defined by ${\imath}_\p(a)=a\otimes 1$ for all $a\in A$ and
${\imath}_\p': B\to B_\p$ be the homomorphism defined by ${\imath}_\p'(b)=b\otimes 1$ for all $b\in B.$
Let ${\imath}_\p^{\ddag}: U_{\infty}(A)/CU_{\infty}(A)\to U_{\infty}(A_\p)/CU_{\infty}(A)$ and $({\imath}'_\p)^{\ddag}: U_{\infty}(B)/CU_{\infty}(B)\to U_{\infty}(B_\p)/CU_{\infty}(B_\p)$
be the induced maps.   The map ${\imath}_\p$ induces the following commutative diagram:
$$
\begin{array}{ccccc}
0 \to & \mathrm{Aff}(\tr(A))/\overline{\rho_A(K_0(A))} & \to U_{\infty}(A)/CU_{\infty}(A) & \to K_1(A)& \to 0\\
& \downarrow_{\overline{({\imath}_\p)_{\sharp}}}&  \downarrow_{{\imath}_\p^{\ddag}} & \downarrow_{({\imath}_\p)_{*1}} \\
0\to & \mathrm{Aff}(\tr(A_\p))/\overline{\rho_A(K_0(A_\p))} &\to U_{\infty}(A_i)/CU_{\infty}(A_\p) & \to K_1(A_\p) &\to 0.
\end{array}
$$

Since there is only one tracial state on $M_{\mathfrak{p}},$ one may identify
$\tr(A)$ with $\tr(A_\p)$ and $\tr(B)$ with $\tr(B_\p).$ One may also
identify ${\overline{\rho_{A_\p}(K_0(A_\p))}}$ with ${\overline{\R \rho_A(K_0(A))}}$ which is the closure
of those elements $r {\widehat{[p]}}$ with $r\in \R.$
Note that $(h_\p)_i: K_i(A\otimes M_{\mathfrak{p}})\to K_i(B\otimes M_{\mathfrak{p}})$ ($i=0,1$) is
given by the K\"unneth formula.
{Since ${\gamma}$ is compatible with ${\lambda},$}  ${\gamma}$ maps $\overline{\R \rho_A(K_0(A))}/{\overline{\rho_A(K_0(A))}}$ into ${\overline{\R \rho_{B}(K_0(B))}}/{\overline{\rho_B(K_0(B))}}.$ Note that
\beq\label{2L1-1}
&&{\mathrm{ker}}({\imath}_\p)_{*1}=\{x\in K_1(A): px=0\,\,\, \text{for some factor  $p$ of} \,\,\, {\mathfrak{p}}\}\,\,\,\text{and}\\
&&{\mathrm{ker}}({\imath}'_\p)_{*1}=\{x\in K_1(B): px=0\,\,\, \text{for some factor  $p$ of} \,\,\, {\mathfrak{p}}\}.
\eneq
Therefore
\beq\label{2L1-2}
&&{\mathrm{ker}}({\imath}_\p^{\ddag})=\{x+s_0(y): x\in {\overline{\R\rho_A(K_0(A))}}/\overline{\rho_A(K_0(A))},
y\in {\mathrm{ker}}(({\imath}_\p)_{*1})\}\,\,\,\text{and}\\
&&{\mathrm{ker}} ({\imath}_\p')^{\ddag}=\{x+s_0'(y): x\in {\overline{\R\rho_A(K_0(B))}}/{\overline{\rho_B(K_0(B))}},\,
y\in {\mathrm{ker}}(({\imath}_\p')_{*1})\}.
\eneq
If $y\in {\mathrm{ker}}(({\imath}_\p)_{*1}),$ then, for some
factor $p$ of ${\mathfrak{p}},$ $py=0.$ It follows that
$p{\gamma}(s_0(y))=0.$ Therefore ${\gamma}(s_0(y))$ must be in ${\mathrm{ker}}(({\imath}'_\p)^{\ddag}).$ It follows that
\beq\label{2L1-3}
{\gamma}({\mathrm{ker}}({\imath}_\p^{\ddag}))\subset {\mathrm{ker}}(({\imath}'_\p)^{\ddag}).
\eneq
This implies that {$\gamma$} induces a unique \hm\, {$\gamma_\p$} such that
the following diagram commutes:
$$
\begin{array}{ccc}
U_{\infty}(A)/CU_{\infty}(A) & \stackrel{{\gamma}}{\to} & U_{\infty}(B)/CU_{\infty}(B)\\
\downarrow_{{\imath}_\p^{\ddag}} & & \downarrow_{({\imath}'_\p)^{\ddag}}\\
U_{\infty}(A_\p)/CU_{\infty}(A_\p)& \stackrel{{\gamma}_\p}{\to} & U_{\infty}(B_\p)/CU_{\infty}(B_\p).
\end{array}
$$
The lemma follows.
\end{proof}

\begin{lem}\label{UCUfiber}
Let $A$ and $B$ be two  unital infinite dimensional separable  stably finite \CA s
whose tracial {simplexes} are non-empty.
 Let ${\gamma}: U_{\infty}(A)/{CU_\infty}(A)\to U_{\infty}(B)/CU_\infty(B)$
be a {continuous} \hm, $h_i: K_i(A)\to K_i(B)$ ($i=0, 1$)  be \hm s\, and
${\lambda}: \aff(\tr(A))\to \aff(\tr(B))$ be an affine \hm\, which are compatible. Let $\mathfrak{p}$ and $\mathfrak{q}$ be  {{  two}} relatively prime supernatural numbers  such that $M_\p\otimes M_\q=Q.$ Denote by $\mathfrak{\infty}$ the supernatural number associated
with the product ${\mathfrak{p}}$ {and}  ${\mathfrak{q}}.$ Let
$E_B: B\to B\otimes \mathcal Z_{{\mathfrak{p}},{\mathfrak{q}}}$ be {the embedding} defined
by $E_B(b)=b\otimes 1$, $\forall b\in B.$ Then
\beq\label{UCUfiber-1}
(\pi_t\circ E_B)^{\ddag}\circ {\gamma}&=& {\gamma}_{{\mathfrak{\infty}}}\circ
{\imath}_{\infty}^{\ddag}\tforal t\in (0,1),\\ \label{UCUfiber-1+}
(\pi_0\circ E_B)^{\ddag}\circ {\gamma} &=&{\gamma}_{\mathfrak{p}}\circ {\imath}_{\mathfrak{p}}^{\ddag},\andeqn \\ \label{UCUfiber-1++}
(\pi_1\circ E_B)^{\ddag}\circ {\gamma} &=&{\gamma}_{\mathfrak{q}}\circ {\imath}_{\mathfrak{q}}^{\ddag},
\eneq
{with} the notation of \ref{2L1}, where $\pi_t: \mathcal Z_{{\mathfrak{p}},{\mathfrak{q}}}\to Q$ is the point-evaluation
at $t$.

\end{lem}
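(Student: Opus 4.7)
The plan is to reduce each of the three claimed identities to a direct application of Lemma \ref{2L1} by recognizing the composition $\pi_t \circ E_B$ as one of the canonical embeddings $\imath'_{\mathfrak{r}}: B \to B \otimes M_{\mathfrak{r}}$ for an appropriate supernatural number $\mathfrak{r}$.

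By definition, $E_B(b) \in B \otimes \mathcal{Z}_{\mathfrak{p},\mathfrak{q}}$ is the constant function $s \mapsto b \otimes 1_Q$, where $1_Q = 1_{M_\mathfrak{p}} \otimes 1_{M_\mathfrak{q}}$. Hence for every $t \in (0,1)$ one has $\pi_t \circ E_B (b) = b \otimes 1_Q \in B \otimes Q$, so $\pi_t \circ E_B$ coincides with $\imath'_\infty : B \to B \otimes Q$ in the notation of Lemma \ref{2L1}, where the ``infinite'' supernatural number is $\mathfrak{p} \cdot \mathfrak{q}$ (which is associated with $Q$, since both $\mathfrak{p}$ and $\mathfrak{q}$ are of infinite type). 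Applying Lemma \ref{2L1} with $\mathfrak{p}\cdot\mathfrak{q}$ in place of $\mathfrak{p}$ yields $(\imath'_\infty)^{\ddag}\circ \gamma = \gamma_\infty \circ \imath_\infty^{\ddag}$, which is exactly (\ref{UCUfiber-1}).

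For $t=0$, the evaluation $\pi_0$ sends $\mathcal{Z}_{\mathfrak{p},\mathfrak{q}}$ into $M_\mathfrak{p}\otimes 1_{M_\mathfrak{q}}\subset Q$, so $\pi_0\circ E_B$ factors as $b \mapsto b\otimes 1_{M_\mathfrak{p}}$ into $B\otimes M_\mathfrak{p}$, composed with the canonical inclusion $B\otimes M_\mathfrak{p}\hookrightarrow B\otimes Q$. Up to this identification of codomains, $\pi_0\circ E_B$ is $\imath'_\mathfrak{p}$, so Lemma \ref{2L1} applied to $\mathfrak{p}$ gives $(\imath'_\mathfrak{p})^{\ddag}\circ\gamma=\gamma_\mathfrak{p}\circ \imath_\mathfrak{p}^{\ddag}$, which is (\ref{UCUfiber-1+}). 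The identity (\ref{UCUfiber-1++}) follows symmetrically at $t=1$, with $\mathfrak{q}$ in place of $\mathfrak{p}$.

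The only point that requires care is the identification of the codomains for $t = 0$ and $t = 1$, where the evaluation lands in a proper corner $M_\mathfrak{r}\otimes 1\subset Q$ rather than all of $Q$; one must verify that the uniqueness clause of Lemma \ref{2L1} allows us to interpret the induced $\gamma_\mathfrak{r}$ consistently under this identification. I do not expect any other serious obstacle: once $\pi_t \circ E_B$ is matched with $\imath'_{\mathfrak{r}}$, all three statements are literally the diagrams in Lemma \ref{2L1}.
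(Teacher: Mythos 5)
Your proposal is correct and takes essentially the same approach as the paper: the paper's proof also identifies $\pi_t\circ E_B$ with the appropriate canonical embedding $\imath'_{\mathfrak{r}}$ (for $\mathfrak{r}=\p\q$, $\p$, or $\q$ according to whether $t\in(0,1)$, $t=0$, or $t=1$) and then invokes Lemma~\ref{2L1}. Your observation that $\pi_t\circ E_B$ and $\imath'_\infty$ already agree as algebra homomorphisms (so equality of the induced $\ddagger$ maps is automatic) is a slight streamlining of the paper's argument, which verifies the equality at the level of $U_\infty/CU_\infty$ by explicit computation, but it is the same route.
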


\begin{proof}
Fix $z\in U_{\infty}(B)/CU_{\infty}(B).$
Let $u\in U_n(B)$ for some integer $n\ge 1$ such that $\overline{u}=z$ in $U_{\infty}(B)/CU_{\infty}(B).$
Then
\beq\label{2L2-02}
E_B^{\ddag}(z)=\overline{u\otimes 1}.
\eneq
In other words, $E_B^{\ddag}(z)$ is represented by
$w(t)\in M_n(B\otimes \mathcal Z_{{\mathfrak{p}},{\mathfrak{q}}})$ for which
\beq\label{2L2-03}
w(t)=u\otimes 1\tforal t\in [0,1].
\eneq
Therefore, for any $t\in (0,1),$  $\pi_t\circ E_B^{\ddag}(z)$ may be written as
\beq\label{2L2-04}
\pi_t\circ E_B^{\ddag}(z)=\overline{u\otimes 1}\,\,\,{\mathrm{in}}\,\,\,
U_{\infty}(B\otimes Q)/CU_{\infty}(B\otimes Q).
\eneq
This implies that
\beq\label{2L2-05}
\pi_t\circ E_B^{\ddag}(z)=({\imath}_{\infty})^{\ddag}(z)\tforal z\in U_{\infty}(B)/CU_{\infty}(B),
\eneq
where ${\imath}_{\infty}: B\to B\otimes Q$ {is} defined by
${\imath}_{\infty}(b)=b\otimes 1$ for all $b\in B.$
It follows from \ref{2L1} that
\beq\label{2L2-06}
(\pi_t\circ E_B)^{\ddag}\circ {\gamma}= {\gamma}_{{\mathfrak{\infty}}}\circ
{\imath}_{\infty}^{\ddag}\tforal t\in (0,1).
\eneq
The identities (\ref{UCUfiber-1+}) and (\ref{UCUfiber-1++}) for end points exactly follow from the same arguments.
\end{proof}

The following is standard (see the proof of 9.6 of \cite{Lnclasn}).

\begin{lem}\label{Rotadd}
Let $C$ and $A$ be two  unital separable stably finite \CA s,\ and let $\phi_1, \phi_2, \phi_3: C\to A$ be three
unital \hm s.  Suppose that
\beq\label{Rotadd-1}
[\phi_1]=[\phi_2]=[\phi_3]\,\,\,\text{in}\,\,\,KL(C,A),\\
(\phi_1)_{\sharp}=(\phi_2)_{\sharp}=(\phi_3)_{\sharp}.
\eneq
Then
\beq\label{Rotadd-2}
{\overline{R}_{\phi_1, \phi_3}=\overline{R}_{\phi_1, \phi_2}+\overline{R}_{\phi_2, \phi_3}}.
\eneq
\end{lem}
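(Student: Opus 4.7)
The plan is to exploit the additivity of the de la Harpe--Skandalis determinant under concatenation of unitary paths. Work on a fixed finitely generated subgroup $G\subseteq K_1(C)$, and fix lifts $\theta_{ij}\colon G\to K_1(M_{\phi_i,\phi_j})$ for $(i,j)\in\{(1,2),(2,3),(1,3)\}$; such lifts exist because $[\phi_i]=[\phi_j]$ in $KL(C,A)$ forces the extensions $\eta_i(M_{\phi_i,\phi_j})$ to be pure. Since everything is stated in the quotient by $\mathcal R_0$, it suffices to verify the identity on the generators of $G$ after choosing a particular, conveniently constructed lift.

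For a generator $z=[u]$ with $u\in U_n(C)$, pick piecewise smooth paths of unitaries $v_{12}\colon[0,1]\to U_n(A)$ from $(\phi_1\otimes\mathrm{id}_{M_n})(u)$ to $(\phi_2\otimes\mathrm{id}_{M_n})(u)$, and $v_{23}$ from $\phi_2(u)$ to $\phi_3(u)$. Let $v_{13}$ be their concatenation, rescaled to $[0,1]$. These paths define unitaries $U_{ij}\in U_n(M_{\phi_i,\phi_j})$ whose classes in $K_1(M_{\phi_i,\phi_j})$ are particular lifts of $z$. Using the trace property to absorb the conjugation by $U_{ij}(0)^*$ appearing in the definition of $R_{\phi_i,\phi_j}$, one has
$$R_{\phi_i,\phi_j}([U_{ij}])(\tau)=\frac{1}{2\pi i}\int_0^1\tau\bigl(v_{ij}'(t)v_{ij}(t)^*\bigr)\,dt\qquad(\tau\in\tr(A)).$$
A change of variables on each half of $v_{13}$ yields
$$R_{\phi_1,\phi_3}([U_{13}])=R_{\phi_1,\phi_2}([U_{12}])+R_{\phi_2,\phi_3}([U_{23}]),$$
which is the desired additivity for this particular choice of lifts.

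To pass from the chosen lifts to arbitrary lifts---and hence to the identity for $\overline R$---invoke Lemma \ref{DrL}: any two lifts of $z$ in $K_1(M_{\phi_i,\phi_j})$ differ by an element in the image of $[\imath]_1\colon K_0(A)\to K_1(M_{\phi_i,\phi_j})$, and $R_{\phi_i,\phi_j}\circ[\imath]_1=\rho_A$. Consequently, replacing our concatenation lift by $\theta_{ij}$ changes $R_{\phi_i,\phi_j}(\theta_{ij}(z))$ only by a member of $\rho_A(K_0(A))$, which belongs to $\mathcal R_0$. Hence the path-level additivity descends to the claimed equality in $\mathrm{Hom}(G,\aff(\tr(A)))/\mathcal R_0$, and varying $G$ gives the full statement.

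The main technical nuisance is to ensure that the concatenation construction is carried out consistently across a full set of generators of $G$ so that $z\mapsto[U_{13}]$ indeed assembles into a group homomorphism on $G$ (and not just on a generating set); this is handled by noting that $K_1$-addition is realized by block diagonal sums of unitaries, and concatenation commutes with direct sums, so compatible choices on generators extend to a homomorphism lift on $G$. With that bookkeeping, the proof reduces to the elementary additivity of path integrals, which is why the statement is labelled as standard.
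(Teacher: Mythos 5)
The concatenation-of-paths strategy is the standard one, and the path-integral additivity computation you describe is correct (after the routine stabilization needed because $\phi_1(u)$ and $\phi_2(u)$ are only guaranteed to lie in the same component of $U_\infty(A)$, not of $U_n(A)$). The paper itself gives no proof, deferring to the proof of 9.6 of \cite{Lnclasn}, so there is no textual comparison to make; your route is the expected one.

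The gap is in the final paragraph. Write $\theta'_{13}(z):=[U_{13}]$ for the concatenation class. What the conclusion actually requires is that $\theta'_{13}$ be an honest group homomorphism $G\to K_1(M_{\phi_1,\phi_3})$: then $\theta'_{13}-\theta_{13}$ is a homomorphism into $[\imath]_1(K_0(A))$, which, by the injectivity of $[\imath]_1$ (purity), lifts to a homomorphism $d:G\to K_0(A)$ with $R_{\phi_1,\phi_3}\circ\theta'_{13}-R_{\phi_1,\phi_3}\circ\theta_{13}=\rho_A\circ d\in\mathcal R_0$. Merely checking on a generating set $\{z_k\}$ that the difference homomorphism $D$ satisfies $D(z_k)\in\rho_A(K_0(A))$ does not yield $D\in\mathcal R_0$: if $G$ has relations, elements $d_k\in K_0(A)$ with $D(z_k)=\rho_A(d_k)$ need not assemble into a homomorphism on $G$, because $\rho_A$ is in general not injective. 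Your block-diagonal remark handles additivity of $\theta'_{13}$ once it is defined, but not its well-definedness, i.e., that $[U_{13}]$ is independent of the representing unitary $u\in U_n(C)$ and of the choices of $v_{12},v_{23}$ within the classes $\theta_{12}(z),\theta_{23}(z)$. That independence is precisely where the hypothesis $[\phi_i]=[\phi_j]$ in $KL(C,A)$ enters again: two representatives of $\theta_{12}(z)$ with fixed endpoints differ by a loop in $\mathrm{S}(A)$ which is trivial in $K_1(M_{\phi_1,\phi_2})$, hence, by injectivity of $[\imath]_1$, trivial already in $K_1(\mathrm{S}(A))\cong K_0(A)$, so after stabilization the concatenations are homotopic in $M_{\phi_1,\phi_3}$. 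This step should be made explicit; with it, the argument closes.
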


%


\begin{lem}\label{L-n} (cf. Theorem 4.2 of \cite{L-N})
Let $A$ be a unital infintie dimensional separable simple \CA\, with $\tr(A)\le 1,$  let
$C\subset A$ be a unital $C^*$-subalgebra which is a unital AH-algebra an let
$\imath : C\to A$ be the embedding.  For any $\lambda\in \mathrm{Hom}(K_0(C),
\overline{\rho_A(K_0(A))}),$ there exists $\phi\in \overline{\mathrm{Inn}}(C, A)$ such that there are \hm s $\theta_i: K_i(C)\to K_i(M_{\imath, \phi})$ with
$(\pi_0)_{*i}\theta_i={\mathrm id}_{K_i(C)},$ $i=0,1,$ and the rotation
map $R_{\imath, \phi}: K_1(C)\to \mathrm{Aff}(T(A))$ is given by
\beq\label{L-n-1}
R_{\imath, \phi}(x)=\rho_A(x-\theta_1((\pi_0)_{*1}(x))+\lambda\circ (\pi_0)_{*1}(x))\tforal x\in K_1(M_{\imath, \phi}).
\eneq
In other words,
\beq\label{L-n-2}
[\phi]=[\imath]\,\,\,{\mathrm in}\,\,\, KK(C,A)
\eneq
and the rotation map $R_{\phi, \psi}: K_1(M_{\imath, \psi})\to {\mathrm Aff}(T(A))$ is given by
\beq\label{L-n-3}
R_{\imath, \phi}(a,b)=\rho_A(a)+\lambda(b)
\eneq
for some identification of $K_1(M_{\imath, \psi})$ with $K_0(A)\oplus K_1(C).$

\end{lem}

\begin{proof}
This follows from the proof of Theorem 4.2 of \cite{L-N}. In Theorem 4.2 of \cite{L-N}, it is assumed that $\rho_A(K_0(A))$ is dense in $\mathrm{Aff}(T(A)).$ However, in fact, it is the condition $\lambda(K_1(C))\subset \overline{\rho_A(K_0(A))}$ that is used. Note that, by Theorem 3.10 of \cite{LN2}, $A$ has property (B1) and (B2) associated $C$ and a constant $\Delta_C$ (3.6 and 3.8 of \cite{L-N}) . Thus this lemma follows exactly the same proof.
  \end{proof}

\begin{lem}\label{L10}
Let $A$ be a unital AH-algebra and let $B$ be a unital separable simple amenable \CA\  with $TR(B)\le 1.$  Suppose that
$\phi_1, \phi_2: A\to B$ are two
{monomorphisms}  such that
\beq\label{L10-0}
[\phi_1]=[\phi_2]\,\,\,\text{in}\,\,\, KK(A,B),\,\,\,
(\phi_1)_{\sharp}=(\phi_2)_{\sharp}\andeqn \phi_1^{\ddag}=\phi_2^{\ddag}.
\eneq
Then there exists
a {monomorphism} $\bt: \phi_2(A)\to B$ such that $[\bt\circ \phi_2]=[\phi_2]$  in
$KK(A,B),$ $ ({\bt\circ \phi_2})_{\sharp}={\phi_2}_\sharp, \,({\bt\circ \phi_2})^{\ddag}=\phi_2^\ddag $  and $\bt\circ \phi_2$ is asymptotically unitarily
equivalent to $\phi_1.$ Moreover, if $H_1(K_0(A), K_1(B))=K_1(B),$
they are strongly asymptotically unitarily equivalent, where
$H_1(K_0(A),K_1(B))={\{x\in K_1(B): \psi([1_A]) = x\,\,\, {\mathrm for\,\,\, some}\,\,\,\psi\in \mathrm{Hom}(K_0(A), K_1(B))\}}.$

\end{lem}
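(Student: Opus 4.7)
The plan hinges on the rotation map machinery of Section 3. Since $[\phi_1]=[\phi_2]$ in $KK(A,B)$ (not merely in $KL$) and $(\phi_1)_\sharp=(\phi_2)_\sharp$, the six-term exact sequence for the mapping torus $M_{\phi_1,\phi_2}$ splits on all of $K_1(A)$, so the rotation map $\overline{R}_{\phi_1,\phi_2}: K_1(A)\to \aff(\tr(B))/\mathcal{R}_0$ is defined globally. The additional hypothesis $\phi_1^{\ddag}=\phi_2^{\ddag}$ combined with Lemma \ref{lem-dense} forces the image of $R_{\phi_1,\phi_2}$ to lie in $\overline{\rho_B(K_0(B))}$ modulo $\rho_B(K_0(B))$.

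The construction of $\beta$ is the technical heart of the argument. I would build $\beta$ as a point-norm limit of maps of the form $\mathrm{Ad}(w_n)\circ\iota$, where $\iota: \phi_2(A)\hookrightarrow B$ is the inclusion and the unitaries $w_n\in B$ are chosen so that the induced rotation class cancels $\overline{R}_{\phi_1,\phi_2}$. More precisely, fixing splittings $s_1: K_1(A)\to U_\infty(A)/CU_\infty(A)$ and $s_1': K_1(B)\to U_\infty(B)/CU_\infty(B)$ of Thomsen's exact sequence \ref{Texact}, one realises the prescribed homomorphism $-\overline{R}_{\phi_1,\phi_2}\circ s_1\circ (\phi_2)_{*1}^{-1}$ on $(\phi_2)_{*1}(K_1(A))$ via de la Harpe--Skandalis determinants of continuous piecewise smooth paths of unitaries in $B$, using the explicit formulas in \ref{Exel-rot} and \ref{Zero-rot}. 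Because $\beta$ is a point-norm limit of inner automorphisms of $B$ restricted to $\phi_2(A)$, the $KK$-class, tracial map, and $U/CU$-map of $\beta\circ\phi_2$ agree with those of $\phi_2$, so the first three assertions of the lemma hold automatically. Meanwhile, by construction $\overline{R}_{\phi_2,\beta\circ\phi_2}=-\overline{R}_{\phi_1,\phi_2}$, and Lemma \ref{Rotadd} yields $\overline{R}_{\phi_1,\beta\circ\phi_2}=0$.

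With $\phi_1$ and $\beta\circ\phi_2$ sharing $KK$-class, tracial map, and $U/CU$-data, and with vanishing rotation, the asymptotic uniqueness theorem from \cite{Lin-Asy} (which upgrades Theorem \ref{Uniq} by incorporating the rotation obstruction) applies to yield a continuous path $\{U(t)\}_{t\in[0,\infty)}\subset U(B)$ implementing the asymptotic unitary equivalence $\phi_1\sim \beta\circ\phi_2$. For the strong asymptotic statement under $H_1(K_0(A),K_1(B))=K_1(B)$, one analyses the obstruction to modifying the path so that $U(0)=1$: this obstruction takes values in $K_1(B)/H_1(K_0(A),K_1(B))$, which vanishes under the hypothesis, so a standard re-routing argument produces a new path beginning at $1$ without disturbing the asymptotic equivalence.

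The principal obstacle is the second step: transferring an abstract homomorphism $K_1(A)\to \overline{\rho_B(K_0(B))}/\rho_B(K_0(B))$ into a concrete asymptotically-inner map $\beta$ of $\phi_2(A)$ into $B$ with exactly the prescribed rotation. This requires delicate use of the splittings of $U_\infty(A)/CU_\infty(A)$ and $U_\infty(B)/CU_\infty(B)$ together with the determinant calculations of Section 3, and careful handling of torsion elements of $K_1(A)$ (whose prescribed rotations automatically lie in $\rho_B(K_0(B))$) to ensure that the limiting $\beta$ is a genuine monomorphism rather than merely an approximate $*$-homomorphism.
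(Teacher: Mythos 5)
Your proposal follows the same logical skeleton as the paper: (i) observe that $\phi_1^\ddag=\phi_2^\ddag$ together with Lemma \ref{lem-dense} puts the rotation in $\overline{\rho_B(K_0(B))}$; (ii) produce a monomorphism $\beta\in\overline{\mathrm{Inn}}(\phi_2(A),B)$ with $\overline{R}_{\phi_2,\beta\circ\phi_2}=-\overline{R}_{\phi_1,\phi_2}$, so that the $KK$-, trace- and $U/CU$-data of $\beta\circ\phi_2$ coincide with those of $\phi_2$ for free; (iii) invoke the additivity Lemma \ref{Rotadd} to get $\overline{R}_{\phi_1,\beta\circ\phi_2}=0$; (iv) feed this into the asymptotic uniqueness machinery; (v) handle the strong asymptotic case via the $H_1$ hypothesis.

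The one substantive difference is step (ii). The paper does not construct $\beta$ at all: it simply cites Theorem 9.4 of \cite{Lnclasn}, which already asserts the existence of a monomorphism in $\overline{\mathrm{Inn}}(\phi_2(A),B)$ with prescribed $KK$-class and rotation. You attempt to re-derive that theorem inline, as a point-norm limit of $\mathrm{Ad}(w_n)\circ\iota$ with rotations tuned via the determinant formulas of Section 3 and the Thomsen splittings; you then explicitly flag this as ``the principal obstacle'' and leave the details (splitting compatibility, torsion in $K_1(A)$, existence of the limit as a genuine monomorphism) unfinished. That is the gap: your proof is not self-contained because the construction of $\beta$ is only sketched, whereas the paper resolves it by quoting a prior result. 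Similarly, for (iv) the paper cites 7.2 of \cite{Lnclasn}; your appeal to ``the asymptotic uniqueness theorem from \cite{Lin-Asy}'' is in the right spirit but one must make sure the version invoked covers non-simple source algebras (here $A$ may be an AH-algebra with property (J) rather than a simple algebra). For the strong asymptotic upgrade, the paper cites 10.5 of \cite{Lnclasn} directly; your sketch of the obstruction in $K_1(B)/H_1(K_0(A),K_1(B))$ is the correct mechanism behind that theorem, but again is offered as a heuristic rather than a proof. In short: same approach, but where the paper closes the argument by pointing to prior theorems (9.4, 7.2, 10.5 of \cite{Lnclasn}), you re-open those black boxes and do not fully close them.
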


\begin{proof}
By Lemma \ref{L-n}, there is a monomorphism $\beta\in\overline{\mathrm{Inn}}(\phi_2(A), B)$ such that $[\beta]=[\imath]$ in $KK(\phi_2(A), B)$
and
$$\overline{R}_{\imath, \beta}=
-\overline{R}_{\phi_1, \phi_2}
$$
where $\imath$ is the embedding of $\phi_2(A)$ to $B$ and
$\overline{R}_{\imath, \beta}$ is viewed as a \hm\, from $K_1(A)=K_1(\phi_2(A))$
to $\aff(\tr(B)).$
In other words
\beq\label{L10-n1}
\overline{R}_{\phi_2, \bt\circ \phi_2}=-\overline{R}_{\phi_1,\phi_2}.
\eneq
One also has that
\beq
&&[\phi_2]=[\beta\circ\phi_2]\,\,\,\text{in}\,\,\,KK(A, B), \\
&&(\bt\circ\phi_2)_{\sharp}=(\phi_2)_{\sharp}\andeqn (\bt\circ \phi_2)^{\ddag}=\phi_2^{\ddag}.
\eneq

Thus
\beq
&&[\phi_1]=[\beta\circ\phi_2]\,\,\,\text{in}\,\,\, KK(A,B),\\
&& (\phi_1)_{\sharp}=(\bt\circ\phi_2)_{\sharp}
\andeqn \phi_1^{\ddag}=(\bt\circ \phi_2)^{\ddag}.
\eneq

It follows from \ref{Rotadd} and \eqref{L10-n1} that
$$
\overline{R}_{\phi_1, \bt\circ \phi_2}=\overline{R}_{\phi_1, \phi_2}+
\overline{R}_{\phi_2, \bt\circ \phi_2}=0.
$$
Therefore, it follows from Theorem 4.2  of \cite{LN2} that the map $\phi_1$ and $\beta\circ\phi_2$ are asymptotically unitarily equivalent.

In the case that $H_1(K_0(A), K_1(B))=K_1(B),$ it follows from Theorem 4.4  of \cite{LN2}
 that $\bt\circ \phi_2$ and $\phi_1$ are strongly
asymptotically unitarily equivalent.
\end{proof}

\begin{lem}\label{Lrot=0}
Let $C$ and $A$ be two unital separable stably finite \CA s.
Suppose that $\phi, \psi: C\to A$ are two unital monomorphisms
such that
\beq\label{Lrot0-1}
[\phi]=[\psi]\,\,\,\text{in}\,\,\,KL(C,A),\
\phi_{\sharp}=\psi_{\sharp}\andeqn
{\overline{R}}_{\phi, \psi}=0.
\eneq
Suppose that $\{U(t): t\in [0,1)\}$ is a piecewise smooth and continuous
path of unitaries in $A$
with $U(0)=1$ such that
\beq\label{Lrot0-2}
\lim_{t\to 1}U^*(t)\phi(u)U(t)=\psi(u)
\eneq
for some $u\in U(C)$ and suppose that there exists $w\in U(A)$ such that
$\psi(u)w^*\in U_0(A).$ Let
$$Z=Z(t)=U^*(t)\phi(u)U(t)w^*\,\,\,\text{if}\,\,\,t\in [0,1)
$$ and
$Z(1)=\psi(u)w^*.$
Suppose {also} that there is a piecewise smooth continuous path of unitaries
$\{z(s): s\in [0,1]\}$ in $A$ such that
$z(0)=\phi(u)w^*$ and $z(1)=1.$
Then, for any   piecewise smooth continuous path
$\{Z(t,s): s\in [0,1]|\}\subset C([0,1],A)$ of unitaries such that
$Z(t,0)=Z(t)$ and $Z(t,1)=1,$ there is $f\in \rho_A(K_0(A))$ such that
\beq\label{Lrot0-3}
{1\over{2\pi \sqrt{-1}}}\int_0^1 \tau({dZ(t,s)\over{ds}}Z(t,s)^*) ds
={1\over{2\pi \sqrt{-1}}}\int_0^1 \tau({dz(s)\over{ds}}z(s)^*) ds +f(\tau)
\eneq
for all $t\in [0,1]$ and $\tau\in T(A).$
\end{lem}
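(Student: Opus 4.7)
The plan is to show that the left-hand side of \eqref{Lrot0-3}, viewed as a function of $t$, is constant on $[0,1]$, and then to compare its value at $t=0$ with the corresponding integral for $z$. Set
$$
H(t)(\tau) := \frac{1}{2\pi\sqrt{-1}}\int_0^1 \tau\bigl(\tfrac{\partial Z}{\partial s}(t,s)\,Z(t,s)^*\bigr)\,ds.
$$
If $H'(t)\equiv 0$ on $[0,1)$, then continuity of $H$ on $[0,1]$ forces $H$ to be constant in $t$, reducing matters to a single comparison of $H(0)$ with the $z$-integral.

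For the derivative, I would differentiate under the integral. Using unitarity of $Z(t,s)$ (so $\partial_\bullet Z^* = -Z^*(\partial_\bullet Z)Z^*$) together with the trace property of $\tau$ (which gives $\tau((\partial_t Z\cdot Z^*)(\partial_s Z\cdot Z^*)) = \tau((\partial_s Z\cdot Z^*)(\partial_t Z\cdot Z^*))$), a standard computation yields
$$
\frac{dH}{dt}(t)(\tau) = \frac{1}{2\pi\sqrt{-1}}\Bigl[\tau\bigl(\tfrac{\partial Z}{\partial t}(t,s)\,Z(t,s)^*\bigr)\Bigr]_{s=0}^{s=1}.
$$
The $s=1$ contribution vanishes since $Z(t,1)\equiv 1$. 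For the $s=0$ contribution, substitute $Z(t,0)=Z(t)=U(t)^*\phi(u)U(t)w^*$, valid on $[0,1)$, write $\dot U = dU/dt$, and expand to get $\tfrac{dZ}{dt}Z^* = -U^*\dot U + U^*\phi(u)\dot U\,U^*\phi(u^*)U$. Applying $\tau$ and using cyclicity together with $\phi(u^*)\phi(u)=1$, the second summand collapses to $\tau(\dot U U^*) = \tau(U^*\dot U)$, cancelling the first; hence $\tau(\tfrac{dZ}{dt}\,Z^*)|_{s=0}=0$. Consequently $H'(t)=0$ on $[0,1)$, and $H$ is constant on $[0,1]$.

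To conclude, note that $U(0)=1$ forces $Z(0)=\phi(u)w^*$, so both $s\mapsto Z(0,s)$ and $z$ are piecewise smooth paths of unitaries in $A$ from $\phi(u)w^*$ to $1$. Their concatenation (with one reversed) is a loop of unitaries based at $\phi(u)w^*$; its de la Harpe--Skandalis determinant, which is precisely $H(0)(\tau)$ minus the $z$-integral, lies in $\rho_A(K_0(A))$ by the standard identification of loop classes in $U(A)$ with elements of $K_0(A)$. This gives the desired $f\in \rho_A(K_0(A))$, and since $H(t)\equiv H(0)$, this single $f$ witnesses the identity for every $t\in[0,1]$. The most delicate point is the algebraic cancellation at $s=0$, where the specific form $Z(t)=U^*\phi(u)Uw^*$ and the cyclicity of $\tau$ are both essential; note that the hypotheses $[\phi]=[\psi]$ in $KL(C,A)$ and $\overline{R}_{\phi,\psi}=0$ enter only indirectly, ensuring via $[\phi(u)w^*]=[\psi(u)w^*]=0$ in $K_1(A)$ that the comparison path $z$ exists at all.
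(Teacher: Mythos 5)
Your core algebraic observation is exactly right and is the same one the paper uses: for $Z(t)=U(t)^*\phi(u)U(t)w^*$, cyclicity of $\tau$ together with $\phi(u^*)\phi(u)=1$ forces $\tau\bigl(\tfrac{dZ}{dt}Z^*\bigr)=0$, which is precisely the content of the paper's claim that the determinant contribution from the conjugating segment vanishes (compare with the paper's computation around its equations (\ref{Lrot-4n}) and (\ref{Lrot0-5})). Your final step — identifying $H(0)(\tau)$ minus the $z$-integral as the determinant of a loop in $U(A)$ based at $\phi(u)w^*$, hence an element of $\rho_A(K_0(A))$ — is also correct.

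The genuine gap is the constancy argument. You write $H'(t)(\tau)=\frac{1}{2\pi\sqrt{-1}}\bigl[\tau(\partial_t Z\cdot Z^*)\bigr]_{s=0}^{s=1}$, but this requires differentiating $Z(t,s)$ in $t$ (and, to move $\partial_t$ past $\int_0^1\! ds$ and to equate the mixed partials, one needs joint $C^1$ regularity in $(t,s)$). The hypothesis only provides that $s\mapsto Z(\cdot,s)$ is a piecewise smooth path in $U(C([0,1],A))$, i.e.\ $Z$ is jointly continuous and $\partial_s Z$ exists; nothing gives $\partial_t Z(t,s)$ for interior $s$. So the identity $H'(t)=0$ is not justified as written. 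You could repair this either by a smoothing/approximation argument that preserves the boundary data $Z(t,0)=Z(t)$, $Z(t,1)=1$, or — more in the spirit of the paper — by noting that for each $t$ the difference of determinants against a fixed piecewise smooth comparison homotopy $Z_1(t,\cdot)$ is the determinant of a loop in $U(C([0,1],A))$, which lies in $\rho_{C([0,1],A)}(K_0(C([0,1],A)))=\rho_A(K_0(A))$ and is independent of $t$ automatically. That is precisely what the paper's explicit construction of $Z_1$ buys: it replaces the $t$-differentiation of $H$ by a homotopy-theoretic argument valid under the given regularity. As it stands, your route is attractive but incomplete; it would be worth either supplying the smoothing step or switching to the comparison-homotopy argument at the constancy stage.
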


\begin{proof}
Define
\beq\label{Lrot0-3+1}
Z_1(t,s)=\begin{cases} U^*(t-2s)\phi(u)U(t-2s)w^* & \,\,\,\text{for}\,\,\, {s}\in [0, t/2)\\
                                  \phi(u)w^* & \,\,\,\text{for}\,\,\, {s}\in [t/2, 1/2)\\
                                  z(2s-1) & \,\,\,\text{for}\,\,\,{s}\in [1/2,1]\end{cases}
                                  \eneq
                                  for $t\in [0,1)$ and define
 \beq\label{Lrot0-3+2}
 Z_1(1,s)=\begin{cases}  \psi(u)w^* & \text{for}\,\,\, s=0\\
                                      U^*(1-2s)\phi(u)U(1-2s)w^* & \,\,\,\text{for}\,\,\, {s}\in (0, 1/2)\\
                                      z(2s-1) & \,\,\,\text{for}\,\,\, {s}\in [1/2, 1].\end{cases}
                                      \eneq
Thus $\{Z_1(t,s) : s\in [0,1]\}\subset C([0,1],A)$ is a piecewise smooth continuous path of unitaries
such that $Z_1(t,0)=Z(t)$ and $Z_1(t,1)=1.$
Thus,
   there is an element $f_{1}\in \rho_A(K_0(A)),$
   such that
\beq\label{Lrot0-3+3}
 f_{1}(\tau)=  {1\over{2\pi\sqrt{-1}}}\int_0^1 \tau({dZ(t,s)\over{ds}}Z(t,s)^*)ds-{1\over{2\pi\sqrt{-1}}}\int_0^1 \tau({dZ_1(t,s)\over{ds}}Z_1(t,s)^*ds
 \eneq
 for all $\tau\in T(A)$ an for all $t\in [0, 1]$.

On the other hand, let $V(t)=U(t)^*\phi(u)U(t)$ for $t\in [0,1)$ and $V(1)=\psi(u).$
{For any $s\in [0,1),$ since $U(0)=1,$ $U(t)\in U(C([0,s], A))_0$ (for $t\in [0,s]$). There there are $a_1,a_2,...,a_k\in U([0,s],A)_{s.a.}$
such that
$$
U(t)=\prod_{j=1}^k\exp(i a_j(t))\tforal t\in [0,s]
$$
Then a straightforward calculation shows that
\begin{equation}\label{Lrot-4n}
\int_0^{s}\frac{dV(t)}{dt}V^*(t)dt=0. 
\end{equation}
}
{ We also have
\beq\label{Lrot-4n2}
{\frac{1}{2\pi {\sqrt{-1}}}}\int_0^{1}{\tau}(\frac{dV(t)}{dt}V^*(t))dt=R_{\phi, \psi}([{V}]){(\tau)}=:f(\tau)\in \rho_A(K_0(A))
\eneq
for all $\tau\in T(A).$}

Then
\beq\label{Lrot0-4}
{1\over{2\pi\sqrt{-1}}}\int_0^{1/2} \tau({dZ_1(1,s)\over{ds}}Z_1(1,s)^* )ds &=&
{1\over{2\pi\sqrt{-1}}}\int_0^{1/2}\tau( {dV(2s-1)\over{ds}}V(2s-1)^*) ds\\
&=&{{R}}_{\phi, \psi}([{V}])(\tau)={f(\tau)}\tforal \tau\in T(A).
\eneq

One computes that, for any $\tau\in T(A)$ and for any  $t\in [0,1),$
by applying (\ref{Lrot0-4}),
\beq\label{Lrot0-5}
&&{1\over{2\pi \sqrt{-1}}}\int_0^1 \tau({dZ_1(t,s)\over{ds}}Z_1(t,s)^*) ds\\
 &&=
{1\over{2\pi \sqrt{-1}}}[\int_0^{{t/2}}\tau({(d(U^*(t-2s)\phi(u)U(t-2s)w^*)\over{ds}}(U^*(t-2s)\phi(u)U(t-2s)w^*)^*)ds+\\
&&\hspace{0.6in} \int_{{t/2}}^{1/2} \tau({dZ_1(t,s)\over{ds}}Z_1(t,s)^*) ds+\int_{1/2}^1 \tau({dz(s-1)\over{ds}}z(2s-1)^*) ds]\\
&&={1\over{2\pi \sqrt{-1}}}[\int_0^{{t/2}} {dV(t-2s)\over{ds}}V(t-2s)^*ds
+\int_{1/2}^1 \tau({dz(2s-1)\over{ds}}z(2s-1)^*) ds]\\
&&={0}
 +{1\over{2\pi\sqrt{-1}}}\int_{1/2}^1 \tau({dz(2s-1)\over{ds}}z(2s-1)^*) ds\\
&&={1\over{2\pi \sqrt{-1}}}\int_0^1 \tau({dz(s)\over{ds}}z(s)^*) ds.
\eneq

It then follows { from} (\ref{Lrot0-4}) that
\beq\label{Lrot0-6}
&&{1\over{2\pi \sqrt{-1}}}\int_0^1 \tau({dZ_1(1,s)\over{ds}}Z_1(1,s)^* ds\\
&&={1\over{2\pi \sqrt{-1}}}[\int_0^{1/2} \tau({dZ_1(1,s)\over{ds}}Z_1(1,s)^* ds
+\int_{1/2}^1 \tau({dz(2s-1)\over{ds}}z(2s-1)^*)ds]\\
&&={f(\tau)}+{1\over{2\pi \sqrt{-1}}}\int_{0}^1 \tau({dz(s)\over{ds}}z(s)^*)ds
\eneq
The lemma follows.
\end{proof}

{\begin{rem}\label{RLrot}
Note that the lemma \ref{Lrot=0} applies to $M_n(C)$ and $M_n(A)$ for all integer $n\ge 1.$
So it works for all $u\in U_n(C).$
\end{rem}
}


\begin{lem}\label{LExt-K}
{ Let $A$ be a unital C*-algebra satisfying that $A\otimes M_\fr$ is an AH-algebra for all supernatural number $\fr$ with infinite type (in particular, all AH-algebra satisfies this property),}
and let $B$ be a  unital  simple \CA\, in $ {\cal N}\cap {\cal C}.$
Let $\kappa\in KL_e(A, B)^{++}$ {and} ${\lambda}:  \mathrm{Aff}(\tr(A))\to \mathrm{Aff}(\tr(B))$ be an affine \hm\,
{which are compatible {(see Definition \ref{DKL})}.}
Then there exists a unital \hm\, $\phi: A\to B$ such that
$$
[\phi]=\kappa\andeqn (\phi)_{\sharp}={\lambda}.
$$
Moreover, if {${\gamma}\in U_{\infty}(A)/CU_{\infty}(A)\to
U_{\infty}(B)/CU_{\infty}(B)$}
is a continuous
\hm\, which is compatible with $\kappa$ {and $\lambda,$} then one may also require that
\beq\label{LExt-K-1}
\phi^{\ddag}|_{{U_{\infty}(A)_0/CU_{\infty}(A)}}={\gamma}|_{U_{\infty}(A)_0/CU_{\infty}(A)}\andeqn
(\phi)^{\ddag}
\circ s_1={\gamma}\circ s_1-{\bar{h}},
\eneq
where $s_1: K_1(A)\to {U_{\infty}(A)/CU_{\infty}(A)}$ is a
splitting map (see \ref{DDet}),
and
$$\bar{h}:  K_1(A)  \to  \overline{\R \rho_B(K_0(B))}/
\overline{\rho_B(K_0(B))}
$$ is a \hm.

Moreover,
\beq\label{LExtK-2}
(\phi\otimes {\mathrm{id}}_{\mathcal Z_{{\mathfrak{p}},{\mathfrak{q}}}})^{\ddag}
\circ s_1=E_B\circ {\gamma}\circ s_1-{\bar{h}},
\eneq
{where $E_B$ is as defined in \ref{UCUfiber}.}
\end{lem}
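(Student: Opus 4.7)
The plan is to first build a unital \hm\ $\phi_0:A\to B$ realising $([\phi_0],(\phi_0)_{\sharp})=(\kappa,\lambda)$, and then to adjust it so that the induced map on $U_\infty/CU_\infty$ matches $\gamma$ up to the infinitesimal subgroup. For the basic existence, in the AH case with property (J), I would write $A=\varinjlim A_n$ with $A_n$ of the form in Definition \ref{AH-J}, invoke the known existence results for maps out of each $P_{n,j}M_{r(n,j)}(C(X_{n,j}))P_{n,j}$ into $B$ with prescribed $KL$-data and affine map, and assemble compatible $\phi_n:A_n\to B$ into $\phi_0=\lim\phi_n$ by an Elliott-style intertwining argument using the uniqueness theorem \ref{Uniq}. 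For $A\in {\cal N}\cap {\cal C}$, I would instead apply the range of invariant theorem of \cite{L-N-Range} to produce a simple intermediate algebra $D\in {\cal C}$ with Elliott invariant equal to the image of $A$'s invariant under $(\kappa,\lambda)$ inside $B$'s, and factor $\phi_0$ through $A\to D\to B$, using classification theorems on the $D$-side.

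The first clause of \eqref{LExt-K-1} comes for free from compatibility. By \ref{DDet}, the subgroup $U_\infty(A)_0/CU_\infty(A)$ is identified via $\overline{\mathrm{Det}}_A$ with $\aff(\tr(A))/\overline{\rho_A(K_0(A))}$, and by the compatibility of $(\kappa,\lambda,\gamma)$ in Definition \ref{DKL} the restriction $\gamma|_{U_\infty(A)_0/CU_\infty(A)}$ equals $\bar\lambda\circ\overline{\mathrm{Det}}_A$. Any unital \hm\ with $\phi_{\sharp}=\lambda$ and $\phi_{*0}=\kappa_0$ automatically induces this same map on the connected component of the unitary quotient, so $\phi_0^{\ddag}$ already agrees with $\gamma$ there.

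The main obstacle is the second identity. The difference $\delta:=\gamma\circ s_1-\phi_0^{\ddag}\circ s_1:K_1(A)\to U_\infty(B)/CU_\infty(B)$ factors through $\ker\varPi_B=\aff(\tr(B))/\overline{\rho_B(K_0(B))}$, because $\varPi_B$ sends both sides to $\kappa_1$. I would then modify $\phi_0$ by absorbing a suitably chosen almost-central unitary produced by Property (B2) (see \ref{B1B2-alg}), which changes $\phi_0^{\ddag}\circ s_1$ by an adjustable \hm\ whose image lies in $\rho_B(K_0(B))/\overline{\rho_B(K_0(B))}$ and, after $\Z$-linear combinations and passage to closure, exhausts the subgroup $\overline{\R\rho_B(K_0(B))}/\overline{\rho_B(K_0(B))}$. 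The hard part is to arrange, possibly after replacing $\phi_0$ by a nearby AH-factorisation, that the remaining residue falls into exactly this infinitesimal subgroup; the key input is that, since $K_1(A)$ is the relevant source, the image of the adjustment is dense in $\overline{\R\rho_B(K_0(B))}$ modulo $\overline{\rho_B(K_0(B))}$, while the divisibility of the quotient $\aff(\tr(B))/\overline{\R\rho_B(K_0(B))}$ in $\mathcal Z$-stable settings prevents any further obstruction. Setting $\bar h:=-\delta$ after this adjustment completes \eqref{LExt-K-1}.

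For the tensored identity \eqref{LExtK-2}, I would apply $E_B^{\ddag}$ to \eqref{LExt-K-1} and use Lemma \ref{UCUfiber}. Since $(\phi\otimes \mathrm{id}_{{\cal Z}_{\p,\q}})=E_B\circ\phi$ as \hm s $A\to B\otimes {\cal Z}_{\p,\q}$, one has $(\phi\otimes\mathrm{id})^{\ddag}=E_B^{\ddag}\circ\phi^{\ddag}$. Applying $E_B^{\ddag}$ to the second line of \eqref{LExt-K-1}, then identifying $E_B^{\ddag}\circ\gamma\circ s_1$ via the fibrewise description of Lemma \ref{UCUfiber} (which matches $(\pi_t\circ E_B)^{\ddag}\circ\gamma$ with the $\gamma_{\mathfrak r}\circ\imath_{\mathfrak r}^{\ddag}$ at the three endpoints/interior), and noting that $\bar h$ survives the embedding because its image lies in the $\R$-scaled tracial subgroup which is preserved by $E_B^{\ddag}$, gives exactly \eqref{LExtK-2}.
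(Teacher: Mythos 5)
Your overall strategy diverges substantially from the paper's, and the central step contains a genuine gap.  The paper does not first build an arbitrary $\phi_0$ realising $(\kappa,\lambda)$ and then try to correct it.  Instead it uses Theorem~8.6 of \cite{Lnclasn} to produce homomorphisms $\phi_{\mathfrak p}:A_{\mathfrak p}\to B_{\mathfrak p}$ and $\psi_{\mathfrak q}:A_{\mathfrak q}\to B_{\mathfrak q}$ realising the $UHF$-stabilised data, uses Lemma~\ref{L10} (hence Theorem~9.4 and 7.2 of \cite{Lnclasn}) together with a flip argument to replace $\phi=\phi_{\mathfrak p}\otimes\mathrm{id}_{M_{\mathfrak q}}$ by something strongly asymptotically unitarily equivalent of the form $(\alpha\circ\psi_{\mathfrak q})\otimes\mathrm{id}_{M_{\mathfrak p}}$, and assembles the continuous path of unitaries into a single homomorphism $\Phi:A\otimes\mathcal Z_{\mathfrak p,\mathfrak q}\to B\otimes\mathcal Z_{\mathfrak p,\mathfrak q}$.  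The map $\phi$ is then $(\mathrm{id}_B\otimes j)\circ\Phi\circ(\mathrm{id}_A\otimes\vartheta)$.  The constraint $\bar h(K_1(A))\subset\overline{\mathbb R\rho_B(K_0(B))}/\overline{\rho_B(K_0(B))}$ is read off directly from the rotation computation in Lemma~\ref{Lrot=0} applied to this specific construction: the relevant determinant lands in $\overline{\rho_{B\otimes Q}(K_0(B\otimes Q))}=\overline{\mathbb R\rho_B(K_0(B))}$ precisely because the path lives in a $Q$-stabilisation of $B$.

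The step where you propose to ``modify $\phi_0$ by absorbing a suitably chosen almost-central unitary produced by Property~(B2)'' does not do what you want.  Conjugation by a unitary, approximate or not, is (approximately) inner, and inner automorphisms act trivially on $U_\infty(B)/CU_\infty(B)$; hence Property~(B2) changes $\mathrm{Bott}(\phi_0,u)$ but does not change $\phi_0^{\ddag}$ at all.  The correct tool for moving $\phi^{\ddag}$ while freezing $KL$- and trace-data is an element $\beta\in\overline{\mathrm{Inn}}$ with prescribed rotation (Theorem~9.4 of \cite{Lnclasn}); but even with that, starting from an unstructured $\phi_0:A\to B$ you get no a priori reason why $\delta=\gamma\circ s_1-\phi_0^{\ddag}\circ s_1$ lies in $\overline{\mathbb R\rho_B(K_0(B))}/\overline{\rho_B(K_0(B))}$ rather than all of $\mathrm{Aff}(\mathrm T(B))/\overline{\rho_B(K_0(B))}$, and divisibility of the further quotient does not remove the obstruction — you would still need a witness homomorphism to absorb it.  The $Q$-fibre structure in the paper's argument supplies exactly this constraint; without routing through $\mathcal Z_{\mathfrak p,\mathfrak q}$ you have no mechanism to produce it.  Finally, your treatment of \eqref{LExtK-2} also rests on the incorrect identification $\phi\otimes\mathrm{id}_{\mathcal Z_{\mathfrak p,\mathfrak q}}=E_B\circ\phi$ (the two maps have different domains), whereas in the paper \eqref{LExtK-2} is a direct consequence of the construction of $\Phi$ and the determinant computation, not a post hoc application of $E_B^{\ddag}$.
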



\begin{proof} 
Let ${\mathfrak{p}}$ and ${\mathfrak{q}}$ be two relative prime supernatural numbers of infinite type such that
$Q=M_{\mathfrak{p}}\otimes M_{\mathfrak{q}}.$
Let $A_\p=A\otimes M_{\mathfrak{p}}$, $A_\q=A\otimes M_{\mathfrak{q}}$, {$B_\p=B\otimes M_{\mathfrak{p}}$ and $B_\q=B\otimes M_{\mathfrak{q}}.$ Then $A_\p$ and $A_\q$ are AH-algebras, and $TR(B_\p)\leq 1$ and $TR(B_\q)\leq 1$.}
Let $\kappa_\p\in KL(A_\p, B_\p),$ $\kappa_\q\in KL(A_\p, B_\p),$ ${\lambda}_\p: \mathrm{Aff}(\tr(A_\p))\to \mathrm{Aff}(\tr(B_\p)),$
${\lambda}_\q: \mathrm{Aff}(\tr(A_\q))\to \mathrm{Aff}(\tr(B_\q)),$
${\gamma}_\p: U(A_\p)/CU(A_\p)\to U(B_\p)/CU(B_\p)$  and
${\gamma}_\q: U(A_\q)/CU(A_\q)\to U(B_\q)/CU(B_\q)$
be induced by $\kappa,$ ${\lambda}$ and ${\gamma},$ respectively.
Note that $A_\p,$ $A_q,$ $B_\p$ and $B_q$ are all unital AH-algebras.
Moreover, since $M_{\mathfrak{r}}\cong M_{\mathfrak{r}}\oplus M_{\mathfrak{r}},$ for any supernatural number
${\mathfrak{r}}$ of infinite type,  $B_\p$ and $B_\q$ are unital simple AH-algebras of slow dimension growth.  It follows from {Corollary 6.11 of \cite{Lin-AU11}} that there is a unital \hm\,
$\phi_{\mathfrak{p}}: A_\p\to B_\p$ such that
\beq\label{2L2-1}
{[\phi_{\mathfrak{p}}]=\kappa_\p\,\,\,{\mathrm{in}}\,\,\,KL(A_\p, B_\p),
(\phi_{\mathfrak{p}})^{\ddag}={\gamma}_\p\andeqn
(\phi_{\mathfrak{p}})_{\sharp}={\lambda}_\p.}
\eneq
For the same reason, there is also a unital \hm\,
$\psi_{\mathfrak{q}}: A_\q\to B_\q$ such that
\beq\label{2L2-2}
{[\psi_{\mathfrak{q}}]=\kappa_\q\,\,\,{\mathrm{in}}\,\,\,KL(A_\q, B_\q),
(\psi_{\mathfrak{q}})^{\ddag}={\gamma}_\q\andeqn
(\psi_{\mathfrak{q}})_{\sharp}={\lambda}_\q.}
\eneq
Define $\phi=\phi_{\mathfrak{p}}\otimes {\mathrm{id}}_{M_{\mathfrak{q}}}$ and
 $\psi=\psi_\q\otimes {\mathrm{id}}_{M_{\mathfrak{p}}}.$
 From above,
 one has that
 $$
 [\phi]=[\psi]\,\,\,\text{in}\,\,\, KL(A\otimes Q, B\otimes Q), \phi_{\sharp}=\psi_{\sharp}\andeqn
 \phi^{\ddag}=\psi^{\ddag}.
 $$
Since both $K_i(B\otimes Q)$ are divisible ($i=0,1$), one actually has
$$
[\phi]=[\psi]\,\,\,\text{in}\,\,\, KK(A\otimes Q, B\otimes Q).
$$
 It follows from \ref{L10} that there is $\bt_0\in \overline{\mathrm{Inn}}(\psi(A\otimes Q), B\otimes Q)$ such that if $\imath_{\psi(A\otimes Q)}$ denotes the embedding of $\psi(A\otimes Q)$ into $B\otimes Q$,
\beq\label{2L2-2--}
&&[\bt_0]=[\imath_{\psi(A\otimes Q)}]\,\,\,\text{in}\,\,\,
KK(\psi(A\otimes Q), {B\otimes Q}),\\
&&(\bt_0)_{\sharp}=(\imath_{\psi(A\otimes Q)})_{\sharp}\andeqn
(\bt_0)^{\ddag}=(\imath_{\psi(A\otimes Q)})^{\ddag}
\eneq
such that $\phi$ and $\bt_0\circ \psi$ are strongly asymptotically
unitarily equivalent (since in this case $H_1(K_0({A\otimes} Q), K_1(B\otimes Q))=K_1(B\otimes Q)$).
Note that  one may identify $T(B_\q),$ $T(B_\p)$ and $T(B\otimes Q).$ Moreover,
$$
\overline{\rho_{B\otimes Q}(K_0(B\otimes Q))}=\overline{\R\rho_B(K_0(B))}=\overline{\rho_{B_\q}(K_0(B_\q))}.
$$
Denote by $\imath_\p: {B}_\q\to {B}\otimes Q$ the embedding
$a\mapsto a\otimes 1_{M_\p}$, and note that the image of
$\imath_{{\p}}\circ\psi_\q$ is in the image of $\psi$.  Thus,  by \ref{lem-dense},
$R_{\bt_0\circ\imath_{{\p}}\circ \psi_\q, \imath_\p\circ\psi_\q}$
{is
in
$\mathrm{Hom}((K_1(M_{\bt_0\circ\imath_\p\circ \psi_\q, \imath_\p\circ\psi_\q}), \overline{\rho_{B_\q}(K_0(B_\q))})$.} Note that
$$[\bt_0\circ\imath_\p\circ \psi_\q]=[\imath_\p\circ\psi_\q]\quad \textrm{in}\ KK(A_\q, B_\q).$$
By \ref{L-n}, there exists $\alpha\in\overline{\mathrm{Inn}}(\psi_\q(A_\q), B_{{\q}})$ such that $$[\alpha]=[\imath_{\psi_\q(A_\q)}]\quad\textrm{in}\ KK(B_\q, B_\q),$$ where $\imath_{\psi_\q(A_\q)}$ is the embedding of $\psi_\q(A_\q)$ into $B_\q$, and $$
{\overline{R}_{\alpha, \imath_{\psi_\q(A_\q)}}=
-\overline{R}_{\bt_0\circ\imath_{{\p}}\circ \psi_\q, \imath_\p\circ\psi_\q}.}
$$

As computed in the proof of \ref{L10}, one has that
\beq\label{2L2-2-}
[\imath_{{\p}}\circ\af\circ\psi_q]=[\beta_0\circ\imath_{{\p}}\circ\psi_\q]\,\,\,\text{in}\,\,\, KK(A_q,B\otimes Q),
\eneq
\beq\label{2L2-2-1}
(\imath_{{\p}}\circ\af\circ\psi_q)_\sharp=(\beta_0\circ\imath_\p\circ\psi_\q)_\sharp\,\,\,\text{and}\,\,\, (\imath_\p\circ\af\circ\psi_q)^\ddag=(\beta_0\circ\imath_\p\circ\psi_\q)^\ddag,
\eneq
and
$$
{
\overline{R}_{\imath_{{\p}}\circ\af\circ\psi_q,
\beta_0\circ\imath_\p\circ\psi_\q}=0.
}
$$

%
%
%
It follows from 7.2 and Theorem 4.2 of \cite{LN2} that
$\imath_\p\circ\alpha\circ\psi_q$ and
$\beta_0\circ\imath_\p\circ\psi_q$ are strongly  asymptotically
unitarily equivalent.

Consider maps $$(\beta_0\circ\imath_{{\p}}\circ\psi_\q)\otimes\mathrm{id}_{{M_\p}},\ \imath\circ\beta_0\circ\psi: A\otimes M_\q\otimes M_\p\to (B\otimes M_\q\otimes M_\p)\otimes M_\p,$$ where $\imath: B\otimes Q\to (B\otimes Q)\otimes M_\p$ is the embedding $b\to b\otimes 1_{M_\p}$ for all $b\in B\otimes Q$.

Identify $\beta_0\circ\psi(B\otimes M_\q\otimes M_\p)\otimes M_\p$ with $\beta_0\circ\psi(B)\otimes \beta_0\circ\psi(M_\q)\otimes \beta_0\circ\psi(M_\p)\otimes M_\p$, and consider the automorphism $\theta$ on $\beta_0\circ\psi(B)\otimes \beta_0\circ\psi(M_\q)\otimes \beta_0\circ\psi(M_\p)\otimes M_\p$ defined by
$$\theta:\ a\otimes b\otimes c\otimes d \mapsto a\otimes b\otimes d\otimes c.$$
Then
$$[\theta|_{{\beta_0(M_q)\otimes}\beta_0(M_\p)\otimes M_\p}]=[{\mathrm{id}}_{{\beta_0(M_q)\otimes}\beta_0(M_\p)\otimes M_\p}]\,\,\,
\text{in}\,\,\, KK({\beta_0(M_q)\otimes}\beta_0(M_\p)\otimes M_\p, {\beta_0(M_q)\otimes}\beta_0(M_\p)\otimes M_\p).$$
Since $K_1({\beta_0(M_q)\otimes}\beta_0(M_\p)\otimes M_\p)=\{0\},$ it follows from Theorem 4.2 of \cite{LN2} that
$\theta|_{{\beta_0(M_q)\otimes}\beta_0(M_\p)\otimes M_\p}$ is strongly asymptotically unitarily equivalent to the identity map.
Therefore $\theta$ is strongly asymptotically unitarily equivalent to the identity map.
Note that for any $a\in {A}$, $b\in M_\q$, and $c\in M_\p$, one has
\begin{eqnarray}\label{connect}
\theta(((\beta_0\circ\imath_{{\p}}\circ\psi_\q)\otimes\mathrm{id}_{M_\p})(a\otimes b\otimes c))&=&\theta(\beta_0(\psi_\q(a\otimes b)\otimes 1_{M_\p})\otimes c)\\
&=&\beta_0(\psi_\q(a\otimes b)\otimes c)\otimes 1_{M_\p}\\
&=&\imath\circ\beta_0\circ\psi(a\otimes b\otimes c).
\end{eqnarray}
Thus, the map $(\beta_0\circ\imath_{{\p}}\circ\psi_\q)\otimes\mathrm{id}_{M_\p}$ is strongly asymptotically unitarily equivalent to $\imath\circ\beta_0\circ\psi$.

Define a map $\Psi_q: A\otimes M_\q\otimes M_\p\to B\otimes M_\q\otimes M_\p\otimes M_\p$ by
\begin{equation}\label{switch}
\Psi_\q: a\otimes b\otimes c\mapsto\alpha(\psi_\q(a\otimes b))\otimes c\otimes 1_{M_\p}.
\end{equation}
Note that for all $a\otimes b\otimes c\in A\otimes M_\q\otimes M_\p$,
\begin{equation}
((\imath_\p\circ\alpha\circ\psi_q)\otimes\mathrm{id}_{M_\p})(a\otimes b\otimes c)=\alpha(\psi_\q(a\otimes b))\otimes 1_{M_\p}\otimes c
\end{equation}
Then the same argument as above shows that $\Psi_q$ is strongly asymptotically unitarily equivalent to $(\imath_\p\circ\alpha\circ\psi_q)\otimes\mathrm{id}_{M_\p}$.

Since $\phi$ and  $\beta_0\circ\psi$ are  strongly  asymptotically unitarily equivalent, one has that the map $\imath\circ\phi$ is strongly asymptotically unitarily equivalent to $\imath\circ\beta_0\circ\psi$, and hence strongly asymptotically unitarily equivalent to $(\beta_0\circ\imath_{{\p}}\circ\psi_\q)\otimes\mathrm{id}_{M_\p}$, and therefore strongly asymptotically unitarily equivalent to $(\imath_\p\circ\alpha\circ\psi_q)\otimes\mathrm{id}_{M_\p}$. {It follows that} the map $\imath\circ\phi$ is strongly asymptotically unitarily equivalent to $\Psi_\q$. {Thus} there is a continuous path of unitaries $\{w(t):\ t\in[0, 1)\}$ in $B\otimes M_\q\otimes M_\p\otimes M_\p$ with $w(0)=1$ such that $$\lim_{t\to 1}w^*(t)(\imath\circ\phi(a))w(t)=\Psi_\q(a),\quad\forall a\in A\otimes Q.$$

Pick an isomorphism $\chi': M_\p\otimes M_\p\to M_\p$, and consider the induced isomorphism $\chi: B\otimes M_\q\otimes M_\p\otimes M_\p\to B\otimes M_\q\otimes M_\p$. Note that $(\chi')^{-1}$ is strongly asymptotically unitarily equivalent to the map $\imath': M_\p\to M_\p\otimes M_\p$ {defined} by $a {\mapsto} 1\otimes a$.
Then, it is straightforward to verify that $\chi\circ\imath\circ\phi$ is strongly asymptotically unitarily equivalent to $\phi$, and $\chi\circ\Psi_\q$ is strongly asymptotically unitarily equivalent to $(\alpha\circ\psi_q)\otimes\mathrm{id}_{M_\p}.$
Thus, there is a continuous path of unitaries $u(t)$ in $B\otimes M_\p\otimes M_\q$ (one can be made it into piecewise smooth---see Lemma 4.1 of \cite{Lnclasn}) such that $u(0)=1$ and
\beq\label{2L2-2+}
\lim_{t\to 1}{\mathrm{ad}}\,u(t)\circ \phi(a)=(\af\circ \psi_{\mathfrak{q}})\otimes {\mathrm{id}}_{M_{\mathfrak{p}}}(a)\tforal a\in A\otimes Q.\eneq


This provides a unital \hm\, $\Phi: A\otimes \mathcal{Z}_{{\mathfrak{p}}, {\mathfrak{q}}}\to B\otimes \mathcal{Z}_{{\mathfrak{p}}, {\mathfrak{q}}}$ such that, for each
$t\in (0,1),$
\beq\label{2L2-3}
\pi_t\circ \Phi(a)={\mathrm{ad}}\, u(t)\circ \phi(a(t))\tforal a\in A\otimes \mathcal Z_{\p, \q}.
\eneq

Denote by ${\vartheta}$ a unital embedding  $\mathcal Z\to \mathcal Z_{\p, \q}$, and let $j:\mathcal Z_{{\mathfrak{p}},{\mathfrak{q}}}\to \mathcal Z$ be a unital homomorphism
induced by the stationary inductive limit
$$
\mathcal Z_{{\mathfrak{p}},{\mathfrak{q}}}
\stackrel{{\vartheta}}{\to} \mathcal Z_{{\mathfrak{p}},{\mathfrak{q}}}\stackrel{{\vartheta}}{\to} \mathcal Z_{{\mathfrak{p}},{\mathfrak{q}}}
\stackrel{{\vartheta}}{\to}\,\,\cdots\to\,\,\, \mathcal Z
$$
given by 3.4 of \cite{RW-Z}, where the map $\vartheta$ is regarded as its restriction to $\mathcal Z_{\p, \q}$.

As in the proof of 7.1 of \cite{Winter-Z} {(note that it follows from the same proof that Proposition 4.6 of \cite{Winter-Z} also works for homomorphisms which are not necessary being injective)},
\beq\label{2L2-4}
((\mathrm{id}_B\otimes j)\circ \Phi\circ(\mathrm{id}_A\otimes {\vartheta}))_{*i}=\kappa_i, \,\,\,i=0,1,\\\label{2L2-4+}
((\mathrm{id}_B\otimes j)\circ \Phi\circ(\mathrm{id}_A\otimes {\vartheta}))_{\sharp}={\lambda}.
\eneq

In fact, {one has} that
\beq\label{2L2-4++}
\Phi_{\sharp}(a\otimes b)(\tau\otimes \mu)=\gamma(a(\tau))\mu(b)\tforal
a\in A_{s.a.}\andeqn b\in (\mathcal Z_{{\mathfrak{p}},{\mathfrak{q}}})_{s.a.}.
\eneq

By considering $((\mathrm{id}_B\otimes j)\circ \Phi\circ(\mathrm{id}_A\otimes i))\otimes {\mathrm{id}}_{C(X_k)}: A\otimes C(X_k)
\to  B\otimes C(X_k)$ for some suitable compact metric spaces $X_k,$ the same argument shows
 that, in fact,
\beq\label{2L2-5}
[(\mathrm{id}_B\otimes j)\circ \Phi\circ(\mathrm{id}_A\otimes {\vartheta})]=\kappa.
\eneq

Define the map ${H}=(\mathrm{id}_B\otimes j)\circ \Phi\circ(\mathrm{id}_A\otimes {\vartheta})$. Then $[{H}]=\kappa$ in $KL(A, B)$ and ${H}_\sharp={\lambda}$.


Note that it follows from (\ref{2L2-4++}) that
\beq\label{2L2-5+}
\Phi^{\ddag}|_{U(A)_0/CU(A)}=E_B^{\ddag}\circ {\gamma}|_{U(A)_0/CU(A)}.
\eneq
Let $z\in U(A)/CU(A).$ Then, one has
\beq\label{2L2-6}
{H}^{\ddag}={\gamma}_{\infty}={\imath}_{\infty}^{\ddag}\circ {\gamma}.
\eneq
On the other hand, for each $z\in U(A)/CU(A),$
there is a unitary $w\in B\otimes \mathcal Z_{{\mathfrak{p}}, {\mathfrak{q}}}$ such that
\beq\label{2L2-7}
\pi_t(w)=\pi_{t'}(w)\tforal t, t'\in [0,1]\andeqn
{E_B^{\ddag}\circ{\gamma}(z)=\overline{w}.}
\eneq
Since $\pi_t(w)\in B$ is constant, {one may} use $w$ {for} its evaluation {at $t$}. Let $v_0\in U(A)$ {be} such that $\overline{v_0}=z.$
For any $t\in (0,1), $ define
\beq\label{2L2-7+1}
Z(t)=\pi_t\circ \Phi(v_0)w^*=u(t)^*\phi(v_0)u(t)w^*.
\eneq
Let $Z(t,s)$ be a piecewise smooth continuous path of unitaries in
$B\otimes \mathcal Z_{{\mathfrak{p}}, {\mathfrak{q}}}$ such that $Z(t,0)=Z(t)$ and $Z(t,1)=1.$
Denote by  $\tau_0$  the unique tracial state in $\mathrm{T}(M_{{\mathfrak{r}}}),$ where
$\mathfrak{r}$ is a supernatural number. For each $s_\mu\in \mathrm{T}({\mathcal Z}_{\p,\q}),$ one may write
$$
s_\mu(a)=\int_0^1 \tau_0(a(t))d\mu(t),
$$
where $\mu$ is a probability Borel measure on $[0,1].$

Then, for $\tau\in T(B)$ and $s_\mu\in T(\mathcal{Z}_{{\mathfrak{p}}, {\mathfrak{q}}}),$
by applying \ref{Lrot=0},
\beq\label{2L2-7+3}
{\mathrm{Det}}(Z)(\tau\otimes s_\mu)&=&
{1\over{2\pi {\sqrt{-1}}}}\int_0^1 (\tau\otimes s_ \mu)({dZ(t,s)\over{ds}}Z(t,s)^*) ds\\
&=&{1\over{2\pi{\sqrt{-1}}}}\int_0^1 \int_0^1(\tau\otimes \tau_0{)}({dZ(t,s)\over{ds}}Z(t,s)^*)d\mu(t) ds\\
&=& \int_0^1({1\over{2\pi{\sqrt{-1}}}}\int_0^1 (\tau\otimes \tau_0{)}({dZ(t,s)\over{ds}}Z(t,s)^*))ds)d\mu(t)\\
&=&\int_0^1{\mathrm{Det}}(\phi(v_0)w^*)(\tau)d\mu(t) +f(\tau)\,\,\,{\text{for\,\,\,some}\ f\in \rho_B(K_0(B)).}
\eneq
By \ref{UCUfiber} and (\ref{2L2-3}),
\beq\label{2L2-7+4}
{\mathrm{Det}}(Z)(\tau\otimes s_\mu)={\mathrm{Det}}(\phi(v_0)w^*)(\tau)+f(\tau)\in
\overline{\R\rho_B(K_0(B))}\subseteq \aff(\tr(B\otimes \mathcal{Z}_{{\mathfrak{p}}, {\mathfrak{q}}}).
\eneq

Thus, $\Phi^{\ddag}(z)(E_B\circ \lambda(z)^*)$ defines a \hm\,
from $U(A)/CU(A)$ into $\overline{\mathbb{R}\rho_B(K_0(B))}/\overline{\rho_B(K_0(B))}$ which will be denoted
by $h.$ By (\ref{2L2-5}),
\beq\label{2L2-9}
h|_{U(A)_0/CU(A)}=0.
\eneq
Thus $h$ induces a \hm\, ${\bar h}: K_1(A)\to \overline{\mathbb{R}\rho_B(K_0(B))}/\overline{\rho_B(K_0(B))}.$
\end{proof}

In \cite{Lnclasn}, it was shown that, given two unital separable simple \CA s  $A$ and $B$ in ${\cal N}\cap {\cal C},$
if there is an isomorphism on the Elliott invariant, i.e.,
$$
(K_0(A), K_0(A)_+, [1_A], K_1(A), \tr(A), r_A)\cong (K_0(B), K_0(B)_+, [1_B], \tr(B), r_B){{,}}
$$
then $A\cong B.$ The following corollary is a more general statement.

\begin{cor}\label{hom}
Let $A$ and $B$ be two unital separable \CA s in ${\cal N}\cap {\cal C}.$ Suppose
that there is a \hm\, $\kappa_i: K_i(A)\to K_i(B)$ such that
$\kappa_0$ is order preserving and $\kappa_0([1_A])\le [1_B]$ and there
is a continuous affine map $\lambda: \aff(\tr(A))\to \aff(\tr(B))$ which {is compatible
with $\kappa_0.$}
Then
there is a \hm\, $\phi: A\to B$ such that
$$
(\phi)_{*i}=\kappa_i,\,\,\,i=0,1\andeqn \phi_{\sharp}=\lambda.
$$
\end{cor}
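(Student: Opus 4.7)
The plan is to reduce to Lemma \ref{LExt-K}, which produces unital homomorphisms from $KL_e^{++}$-data together with a compatible affine map (and a compatible $\gamma$). Two preparations are therefore needed before that lemma applies: cut $B$ down to a unital corner whose unit has class $\kappa_0([1_A])$, and upgrade the $K$-theoretic data $(\kappa_0,\kappa_1)$ to a genuine $KL$-class.

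First, I would use stable rank one of $B$ (which follows from $B\in\mathcal{C}$: $B\otimes M_\p$ has tracial rank at most one and hence stable rank one, and this descends to $B$ in the simple unital setting), together with cancellation of projections, to pick a projection $p\in B$ with $[p]=\kappa_0([1_A])$. Set $B_0:=pBp$; this is unital, separable, simple, amenable, satisfies the UCT, and remains in $\mathcal{C}$ since $B_0\otimes M_\p=(p\otimes 1)(B\otimes M_\p)(p\otimes 1)$ still has tracial rank at most one (this property passes to corners in the simple case). Under the canonical identification $K_*(B_0)\cong K_*(B)$ induced by the full projection $p$, one has $[1_{B_0}]=\kappa_0([1_A])$, so $\kappa_0:K_0(A)\to K_0(B_0)$ becomes unital and order-preserving; simplicity of $A$ (every nonzero element of $K_0(A)_+$ has a multiple dominating $[1_A]$) then upgrades this to strict positivity.

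Second, since $A\in\mathcal{N}$, the UCT surjection $KL(A,B_0)\twoheadrightarrow\mathrm{Hom}(K_*(A),K_*(B_0))$ allows me to pick a lift $\kappa\in KL_e(A,B_0)^{++}$ of $(\kappa_0,\kappa_1)$. The canonical affine isomorphism $\tr(B_0)\cong\tr(B)$, $\sigma\mapsto\tau$ with $\sigma(\cdot)=\tau(\cdot)|_{B_0}/\tau(p)$, together with the corresponding scaling of the $\rho$-pairings, transports $\lambda$ to a continuous affine map $\tilde\lambda:\aff(\tr(A))\to\aff(\tr(B_0))$ which is compatible with $\kappa$ (the hypothesised compatibility of $\lambda$ with $\kappa_0$ passes through the identification because $\iota_*\circ\kappa_0=\kappa_0$ under the canonical $K_0$-identification). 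A compatible $\gamma$ is then manufactured by fixing splittings of the Thomsen short exact sequences in \ref{DDet} and defining it coordinatewise from $\overline{\tilde\lambda}$ and $\kappa_1$. Lemma \ref{LExt-K} now delivers a unital homomorphism $\phi_0:A\to B_0$ with $[\phi_0]=\kappa$ and $(\phi_0)_\sharp=\tilde\lambda$; composing with the inclusion $B_0\hookrightarrow B$ produces the desired $\phi:A\to B$ satisfying $\phi_{*i}=\kappa_i$ and $\phi_\sharp=\lambda$.

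The principal obstacle I anticipate is the first step, namely certifying that $\kappa_0([1_A])$ is realised by a genuine projection $p\in B$; this relies on stable rank one for $B$ itself, not merely for $B\otimes M_\p$. If this does not follow cleanly from $B\in\mathcal{C}$ alone, the workaround is to build the projection $p$ first in some matrix amplification $M_n(B)$, construct a unital $\phi_0:A\to pM_n(B)p$ by the argument above, and then use cancellation in the stabilisation together with the bound $\kappa_0([1_A])\le[1_B]$ to conjugate $pM_n(B)p$ onto an honest corner of $B$. The remaining ingredients—the UCT lift, the trace identification with its normalisation factor, and the construction of a compatible $\gamma$—are essentially formal by comparison.
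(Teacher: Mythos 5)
Your proposal matches the paper's proof in all essential respects: the paper also lifts $(\kappa_0,\kappa_1)$ to a $KL$-class via the UCT short exact sequence, chooses a projection $p\in B$ with $[p]=\kappa_0([1_A])$, cuts down to the corner $B_1=pBp$, notes that the resulting data land in $KL_e(A,B_1)^{++}$ compatibly with $\lambda$, and then invokes Lemma~\ref{LExt-K}. The order in which you do the corner-cutting and the $KL$-lift is reversed relative to the paper, but that is immaterial.

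One small inefficiency worth flagging: you go to some lengths to ``manufacture'' a compatible $\gamma$ on the level of $U_\infty/CU_\infty$, but this step is unnecessary. The first conclusion of Lemma~\ref{LExt-K} — producing a unital $\phi$ with $[\phi]=\kappa$ and $\phi_\sharp=\lambda$ — requires only that $\kappa\in KL_e(A,B_1)^{++}$ and $\lambda$ be compatible; the ``Moreover'' clause involving $\gamma$ is needed only when one additionally wants control of $\phi^\ddag$, which Corollary~\ref{hom} does not ask for. The paper's proof accordingly never introduces a $\gamma$.

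Your concern about realising $\kappa_0([1_A])$ by an honest projection in $B$ (rather than in a matrix amplification) is a legitimate one, and the paper simply asserts it without comment; your suggested workaround via a projection in $M_n(B)$ followed by cancellation, using the bound $\kappa_0([1_A])\le[1_B]$, is a reasonable way to fill that step in should stable rank one of $B$ not be available directly.
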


\begin{proof}
Consider the splitting short exact sequence:
$$
0\to \textrm{Ext}_\Z(K_*(A), K_{*+1}(B))\to KK(A, B)\to \textrm{Hom}(K_*(A), K_*(B))\to 0.
$$
There exists an element $\kappa\in KK(A, B)$ such that the image of $\kappa$ in $Hom(K_*(A), K_*(B))$
is exactly the same as that $\kappa_*.$  Let $\bar \kappa$ in $KL(A,B)$ be the image of $\kappa.$
There is a projection $p\in B$ such that $[p]=\kappa_0([1_A]).$  Let $B_1=pBp.$
Then $\bar \kappa\in KL_e(A, B_1)^{++}$ and $\lambda$ and ${\bar \kappa}$ are compatible.  It follows from  \ref{LExt-K}
that there is a unital \hm\, $\phi: A\to B_1\subset B$ such that
$$
[\phi]={\bar\kappa}\andeqn \phi_{\sharp}=\lambda.
$$
\end{proof}

\begin{thm}\label{Ext1}
Let $C$ be a unital {C*-algebra such that $C\otimes M_\fr$ {is} an AH-algebra for all supernatural number $\fr$ with infinite type}, and let $A$ be a unital  simple \CA\, in ${\cal N}\cap {\cal C}$ which is ${\cal Z}$-stable.
Then, for any $\kappa\in {KLT}_e(C,A)^{++}$ and
{a continuous \hm\,} ${\gamma}: U_{\infty}(C)/CU_{\infty}(C)\to  U_{\infty}(A)/CU_{\infty}(A)$
which are compatible, there is a unital monomorphism
$\phi: C\to A$ such that
$$
{([\phi], \phi_{\sharp})}=\kappa\andeqn
\phi^{\ddag}={\gamma},
$$
provided that
{
\begin{enumerate}
\item $K_1(C)$ is a free group, or
\item $\overline{\R\rho_A(K_0(A))}/\overline{\rho_A(K_0(A))}=\{0\},$ or
\item $\overline{\R\rho_A(K_0(A))}/\overline{\rho_A(K_0(A))}$ is torsion free and $K_1(C)$ is finitely generated.
    \end{enumerate}
}
\end{thm}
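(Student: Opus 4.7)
The plan is to invoke Lemma~\ref{LExt-K} and then eliminate the residual error term under each of the three hypotheses. Applied to $(\kappa,\lambda,\gamma)$ (with $\lambda$ the affine map on traces determined by $\kappa$), Lemma~\ref{LExt-K} produces a unital \hm\, $\phi_0:C\to A$ satisfying $[\phi_0]=\kappa$, $(\phi_0)_\sharp=\lambda$, $\phi_0^{\ddag}|_{U_\infty(C)_0/CU_\infty(C)}=\gamma|_{U_\infty(C)_0/CU_\infty(C)}$, and $\phi_0^{\ddag}\circ s_1=\gamma\circ s_1-\bar h_0$ for some \hm\, $\bar h_0:K_1(C)\to \overline{\R\rho_A(K_0(A))}/\overline{\rho_A(K_0(A))}$. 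The whole task is to show $\bar h_0$ can be arranged to vanish.

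Hypothesis (2) is immediate: the codomain of $\bar h_0$ is the zero group. Hypothesis (3) reduces to hypothesis (1): when the codomain is torsion-free and $K_1(C)$ is finitely generated, the map $\bar h_0$ factors through the free quotient $K_1(C)/\mathrm{Tor}(K_1(C))$, and we can apply the argument for the free case to that quotient.

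For hypothesis (1), the plan is to reapply Lemma~\ref{LExt-K} with $\gamma$ replaced by a modified version that preemptively cancels $\bar h_0$. Freeness of $K_1(C)$ together with the Thomsen splitting of $U_\infty(C)/CU_\infty(C)$ lets us define $\gamma':U_\infty(C)/CU_\infty(C)\to U_\infty(A)/CU_\infty(A)$ by $\gamma'|_{U_\infty(C)_0/CU_\infty(C)}=\gamma|_{U_\infty(C)_0/CU_\infty(C)}$ and $\gamma'\circ s_1=\gamma\circ s_1+\bar h_0$. A correspondingly shifted splitting of Thomsen's sequence for $A$ restores compatibility of $(\kappa,\lambda,\gamma')$ in the sense of Definition~\ref{DKL}. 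Applying Lemma~\ref{LExt-K} to $(\kappa,\lambda,\gamma')$ produces a unital \hm\, $\phi:C\to A$ with $\phi^{\ddag}\circ s_1=\gamma'\circ s_1-\bar h=\gamma\circ s_1+\bar h_0-\bar h$, for some possibly new error $\bar h$.

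The key step, and the main obstacle, is to establish that the error produced by Lemma~\ref{LExt-K} depends only on $(\kappa,\lambda)$ and on the restriction $\gamma|_{U_\infty(C)_0/CU_\infty(C)}$, independently of the $s_1$-lift of $K_1(C)$. Given this stability, $\bar h=\bar h_0$, the displayed identity collapses to $\phi^{\ddag}\circ s_1=\gamma\circ s_1$, and together with $\phi^{\ddag}|_{U_\infty(C)_0/CU_\infty(C)}=\gamma|_{U_\infty(C)_0/CU_\infty(C)}$ this gives $\phi^{\ddag}=\gamma$. The stability claim is verified by tracing through the proof of Lemma~\ref{LExt-K}: $\bar h$ is computed via the de la Harpe--Skandalis determinant of the unitary path $u(t)$ connecting the fiber maps $\phi_{\p}$ and $\alpha\circ\psi_{\q}$ in $B\otimes\mathcal Z_{\p,\q}$, and the calculation in (\ref{2L2-7+3})--(\ref{2L2-7+4}) shows that modulo $\overline{\rho_A(K_0(A))}$ this determinant is controlled by the affine data $\lambda$ and by $\gamma$ restricted to $U_\infty(C)_0/CU_\infty(C)$, hence is unchanged when the $s_1$-lift is shifted.
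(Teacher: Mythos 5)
The proposal takes a genuinely different route from the paper's and contains a gap that the hypotheses on $K_1(C)$ are meant to address. The paper's proof of case (1) applies Lemma~\ref{LExt-K} once to obtain $\psi$ with error $\bar h:K_1(C)\to\overline{\R\rho_A(K_0(A))}/\overline{\rho_A(K_0(A))}$, then uses freeness of $K_1(C)$ (or the alternative hypotheses) to \emph{lift} $\bar h$ to a \hm\ $h_1:K_1(C)\to\overline{\R\rho_A(K_0(A))}$ landing in the group itself rather than its quotient, extends $h_1$ to $h_{1,\p}$ and $h_{1,\q}$ over the rationalizations, and then feeds these into Theorem~9.4 of \cite{Lnclasn} to manufacture twist monomorphisms $\beta_0,\beta_1$ in $\overline{\mathrm{Inn}}$ with prescribed rotation maps $\overline{R}_{\psi\otimes\mathrm{id}_{M_\fr},\,\beta\circ(\psi\otimes\mathrm{id}_{M_\fr})}=h_{1,\fr}$; these twists are assembled via Lemma~\ref{L10} and the determinant computation of Lemma~\ref{Lrot=0} into a self-map $\Phi$ of $A\otimes\mathcal Z_{\p,\q}$ that exactly cancels $\bar h$. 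The lift $h_1$ is the crux: the rotation-map existence theorem requires a target in $\aff(\tr(A))$, not in a quotient of it, and that is precisely where freeness of $K_1(C)$ enters.

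Your proposal never performs that lift; you instead shift $\gamma$ to $\gamma'=\gamma+\bar h_0\circ\varPi$ on the $s_1$-part and reapply Lemma~\ref{LExt-K}. Two problems. First, $\gamma'$ is in general \emph{not} compatible with $\kappa$ in the sense of Definition~\ref{DKL}: compatibility requires $\gamma'\circ s_1^C=s_1^{A}\circ\kappa_1$ (possibly after replacing the fixed splitting $s_1^A$ by some $s_1^{A\prime}=s_1^A+\delta$), and this forces $\delta\circ\kappa_1=\bar h_0$; such a $\delta$ exists only when $\bar h_0$ vanishes on $\ker\kappa_1$, which nothing in the construction of Lemma~\ref{LExt-K} guarantees. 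If, say, $\kappa_1=0$ while the determinant defect $\bar h_0\neq 0$, no choice of splitting restores compatibility, and Lemma~\ref{LExt-K} cannot be applied to $(\kappa,\lambda,\gamma')$ at all. Second, the stability claim --- that the error $\bar h$ produced by Lemma~\ref{LExt-K} depends only on $(\kappa,\lambda,\gamma|_{U_\infty(C)_0/CU_\infty(C)})$ and is therefore unchanged under the shift --- is asserted but not proved. The computation (\ref{2L2-7+3})--(\ref{2L2-7+4}) only pins down the \emph{class} of $\phi(v_0)w^*$ in $U(B\otimes Q)/CU(B\otimes Q)$, and that class is trivially zero; the honest determinant, which is what $\bar h$ is built from, is a finer invariant depending on the actual unitaries $\phi_\p(v_0\otimes 1)$ and $w$ chosen, and both change when $\gamma$ is replaced by $\gamma'$. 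Finally, the fact that your argument for case (1) never uses freeness of $K_1(C)$ in any essential way is itself a red flag: freeness cannot be removed from the statement (it is what makes the lift of $\bar h$ possible), so any proof that does not visibly exploit it should be treated with suspicion.
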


\begin{proof}

It follows from \ref{LExt-K} that there is a unital monomorphism $\psi: C\to A$ such that
\beq\label{Ext1-1}
\hspace{0.4in}(\psi, \psi_{\sharp})=\kappa,\,\,\,
\psi^{\ddag}|_{U(C)_0/CU(C)}=\lambda|_{U(C)_0/CU(C)}\andeqn (\psi\otimes {\mathrm{id}}_{{\mathcal{Z}}_{\p, \q}})^{\ddag}\circ s_1=
E_B^{\ddag}\circ {\gamma}\circ s_1-{\bar h},
\eneq
where ${\bar h}: K_1(C)\to \overline{\R\rho_A(K_0(A))}/\overline{\rho_A(K_0(A))}$ is a \hm.
If
${K_1(C)}$ is free, there exists a \hm\, $h_1: K_1(C)\to \overline{\R\rho_A(K_0(A))}$
which
induces $h_1.$  In the case that $\overline{\R\rho_A(K_0(A))}/\overline{\rho_A(K_0(A))}$ is torsion
free and ${K_1(C)}$ is finitely generated, then one also obtains a such $h_1.$
Since $\overline{\R\rho_A(K_0(A))}$ is torsion free, ${h_1}$ induces
a \hm\, ${{\bar h}_1: K_1(C)/({\mathrm{Tor}}(K_1(C)))}\to
\overline{\R\rho_A(K_0(A))}.$ Since the map from
$K_1({C})/({\mathrm{Tor}}(K_1({C})))\to (K_1(A)/({\mathrm{Tor}}(K_1(A)))\otimes \Q_\p$ is injective, one obtains a \hm\, ${h_{1,\p}}:
K_1({C}\otimes M_\p)\to \overline{\R\rho_A(K_0(A))}$ such that
\beq\label{Ext1-n}
{h_1=h_{1,\p}}\circ (\imath_\p)_{*1},
\eneq
where $\imath_\fr: A\to A\otimes M_\fr$ is the embedding
so that $\imath_r(a)=a\otimes 1$ for all $a\in A$ ($\fr$ is a supernatural number).
Similarly, there is a \hm\, ${h_{1,\q}: K_1(C\otimes M_\q)}\to \overline{\R\rho_A(K_0(A))}$ such that
\beq\label{Ext1-n1}
{h_1}=h_{1,\q}\circ (\imath_\q)_{*1}.
\eneq

{Put $C'_\fr=(\psi\otimes{\mathrm{id}}_{M_\fr})(C\otimes M_\fr)),$
where $\fr$ is a supernatural number.}
It follows from  \ref{L-n} that there is a {monomorphism
$\beta_0\in \overline{\mathrm{Inn}}(C'_\p, A_\p)$} such that
\begin{equation}\label{Ext1-4}
{[\beta_0]=[\imath_{C'_\p}]\,\,\,{\mathrm{in}}\,\,\,KK({C_\p', A_\p}),\,\,\,(\bt_0)_{\sharp}={\imath_{C'_\p}}_{\sharp},\,\,
\bt_0^{\ddag}={\imath_{C'_\p}}^{\ddag}\andeqn {\overline{R}_{{\psi\otimes {\mathrm{id}}_{M_\p}, \bt_0\circ (\psi\otimes {\mathrm{id}}_{M_\p})}}}={h_{1,\p}},}
\end{equation}
{where $\imath_{C'_\p}$ is the embedding of $C'_\p$.}

Similarly, there is a {monomorphism} $\beta_1\in \overline{\mathrm{Inn}}({C'_\q, A_\q})$ such that
\begin{equation}\label{Ext1-5}
{ [\beta_1]=[\imath_{C'_\q}]\,\,\,{\mathrm{in}}\,\,\,KK({C_\q', A_\q}),\,\,\,(\bt_1)_{\sharp}={\imath_{C'_\q}}_{\sharp},\,\,
\bt_1^{\ddag}={\imath_{C'_\q}}^{\ddag}\andeqn {\overline{R}_{{\psi\otimes {\mathrm{id}}_{M_\q}, \bt_1\circ (\psi\otimes {\mathrm{id}}_{M_\q})}}}={h_{1,\q}},}
\end{equation}
where $\imath_{C'_\q}$ is the embedding of $C'_\q$.
%
%

As in the proof of \ref{LExt-K}, by applying \ref{L10}  and its proof, one has a {monomorphism
$\bt_2\in \overline{\mathrm{Inn}}(\beta_1\circ(\psi\otimes\mathrm{id}_{M_\q})(C_\q), A_\q)$} and a piecewise smooth continuous path of unitaries $\{U(t): t\in [0,1)\}$ of $A\otimes Q$
such that $U(0)=1$ and
\begin{equation}\label{Ext1-5+}
{ [\bt_2\circ\beta_1\circ(\psi\otimes\mathrm{id}_{M_\q}))]=[\beta_0\circ(\psi\otimes\mathrm{id}_{M_\p})]\quad{\mathrm{in}}\ KK(C_{\q}, A_{\q}),}
\end{equation}

\begin{equation}
(\bt_2\circ\beta_1\circ(\psi\otimes\mathrm{id}_{M_\q})))_\sharp=(\beta_0\circ(\psi\otimes\mathrm{id}_{M_\p}))_\sharp
\end{equation}
and
\begin{equation}
(\bt_2\circ\beta_1\circ(\psi\otimes\mathrm{id}_{M_\q})))^\ddag=(\beta_0\circ(\psi\otimes\mathrm{id}_{M_\p}))^\ddag.
\end{equation}

{{Moreover, if denote by $\psi_0=\beta_0\circ(\psi\otimes\mathrm{id}_{M_\p})$ and $\psi_1= \bt_2\circ\beta_1\circ(\psi\otimes\mathrm{id}_{M_\q}))$, one has that}
\begin{equation}
\lim_{t\to 1} U(t)^*(\psi_0\otimes {\mathrm{id}}_{M_\q})(a)U(t)=(\psi_1\otimes {\mathrm{id}}_{M_\p})(a)
\end{equation}
%
%
for all $a\in A\otimes Q.$
In particular,
\beq\label{Ext1-7}
{\overline{R}_{\psi_0\otimes {\mathrm{id}}_{M_\q},  \psi_1\otimes {\mathrm{id}}_{M_\p}}=0.}
\eneq
}

Let $\Phi: A\otimes {\cal Z}_{\p, \q}\to A\otimes {\mathcal Z}_{\p, \q}$ be defined by
\beq\label{Ext1-8}
\hspace{0.2in}\Phi(a\otimes b)(t)&=&U^*(t)(({\psi_0}\otimes \mathrm{id}_{M_\q}(a\otimes  b(t))U(t)\tforal t\in [0,1)
\andeqn\\
\Phi(a\otimes b)(1)&=&{\psi_1}\otimes {\mathrm{id}}_{M_\p}(a\otimes b(1)),
\eneq
for all $a\otimes b\in A\otimes {\cal Z}_{\p, \q}.$

We claim that
\beq\label{Ext1-n+}
\Phi^{\ddag}\circ (E_A\circ \psi)^{\ddag}\circ s_1=(E_A)^{\ddag}\circ {\gamma}\circ s_1.
\eneq

To compute $\Phi^{\ddag},$ let $x\in s_1(K_1(C))$ and $v_0\in U(C)$ such that
$\overline{v_0}=x.$ There is $w\in U(A\otimes {\mathcal Z}_{\p,\q})/CU(A\otimes {\cal Z}_{\p,\q})$ such that $w(t)=w(t')$
for all $t, t'\in [0,1]$ and
\beq\label{Ext1-9}
E_A^{\ddag}\circ {\gamma}\circ s_1(x)=\overline{w}.
\eneq
Let $Z=(\Phi\circ (\psi\otimes {\mathrm{id}}_{{\cal Z}_{\p,\q}})(v_0))w^*\in A\otimes {\cal Z}_{\p, \q}.$  Note that $Z\in U(A\otimes \mathcal Z_{\p, \q})_0.$
Suppose that there is a piecewise smooth continuous path $\{Z(t, s): s\in [0,1]\}\subset A\otimes  {\cal Z}_{\p,\q}$ such that
$Z(t,0)=Z(t)$ and $Z(t,1)=1.$
Then
\beq\label{Ext1-10}
&&{\mathrm{Det}}(Z(t,s))\\
&&={\mathrm{Det}}(\Phi\circ
((\psi\otimes \mathrm{id}_{{\cal Z}_{\p,\q}})(v_0))(\psi\otimes \mathrm{id}_{{\cal Z}_{\p,\q}}(v_0)^*)
+\mathrm{Det}((\psi\otimes \mathrm{id}_{{\cal Z}_{\p,\q}})(v_0)w^*)\\
&&={\mathrm{Det}}(\Phi\circ
((\psi\otimes {\mathrm{id}}_{{\cal Z}_{\p,\q}})(v_0))(\psi\otimes {\mathrm{id}}_{{\cal Z}_{\p,\q}}(v_0)^*)+{\bar h}\circ s_1(x).
\eneq
It follows from \ref{Lrot=0} that
\beq\label{Ext1-11}
{\mathrm{Det}}(\Phi\circ
((\psi\otimes {\mathrm{id}}_{{\cal Z}_{\p,\q}})(v_0))(\psi\otimes {\mathrm{id}}_{{\cal Z}_{\p,\q}}(v_0)^*)&=&{\mathrm{Det}}(\bt_0\circ \psi(v_0)
\psi(v_0)^*)+\rho_A(K_0(A))\\
&=& R_{\bt_0\circ \psi, \psi}([v_0])+\rho_A(K_0(A))\\
&=& -h_{{{1}},\p}\circ s_1(x)+\rho_A(K_0(A)).
\eneq
Therefore, by (\ref{Ext1-n}) and by (\ref{Ext1-10}),
$$
{\mathrm{Det}}(Z(t,s))(\tau\otimes s_\mu)\in \rho_A(K_0(A)).
$$
This proves the claim.

Regard $\psi$ as a map to $A\otimes\mathcal Z$. Denote by $j: {\mathcal{Z}}_{\p,\q}\to {\cal Z}$
 {the} unital homomorphism induced by the stationary inductive limit decomposition of $\mathcal Z$, and denote by ${\vartheta}:\mathcal Z\to \mathcal Z_{\p.\q}$ { the} unital embedding induced by tensoring $\mathcal Z$ ($\mathcal Z_{{\p, \q}}$ is $\mathcal Z$-stable). Consider
$$\phi=(\mathrm{id}_{A}\otimes j) \circ \Phi\circ(\mathrm{id}_A\otimes {\vartheta})\circ \psi.$$
One then checks that
$$
[\psi]=[\phi]\,\,\,{\text{in}\,\,\,} KL(C,A),\,\,\,\phi_{\sharp}=\psi_{\sharp}\andeqn
\phi^{\ddag}={\gamma}.
$$
\end{proof}


\begin{rem}\label{FR}
It follows from Proposition 3.6 of \cite{Lnunitary} that,
if $TR(A)\le 1,$ then
$$\overline{\R\rho_A(K_0(A))}/\overline{\rho_A(K_0(A))}=\{0\}.$$
So Theorem \ref{Ext1} recovers  a version of Theorem 8.6 of \cite{Lnclasn}.

{
Now suppose that in \ref{Ext1},
$$
U_{\infty}(C)/CU_{\infty}(C)=U_{\infty}(C)_0/CU_{\infty}(C)\oplus G_1\oplus {\mathrm{Tor}}(K_1(C)),
$$
where $G_1$ is identified with a free subgroup of $K_1(C).$
From the proof of Theorem \ref{Ext1}, we see that, if $\kappa\in KLT_e(C,A)^{++}$ and
${\gamma}: U_{\infty}(C)/CU_{\infty}(C)\to U(A)/CU(A)$ which is compatible to $\kappa$ are given,
there is a unital monomorphism $\phi: C\to A$ such that $([\phi], \phi_{\sharp})=\kappa$
and
$$
\phi|_{U_{\infty}(C)_0/CU_{\infty}(C)\oplus G_1}={\gamma}|_{U_{\infty}(C)_0/CU_{\infty}(C)\oplus G_1}
$$
and
$$
\phi^{\ddag}(z){{-}}{\gamma}(z)\in \overline{\R \rho_A(K_0(A))}/\overline{\rho_A(K_0(A))}
$$
for all $z\in {\mathrm{Tor}}({K_1}(C)).$}
\end{rem}

\bibliographystyle{plain}

\hspace{0.3in}


\end{document}